\documentclass[11pt, leqno]{article}%
\usepackage{bm}
\usepackage{xcolor}
\usepackage{eurosym}
\usepackage{amsmath}
\usepackage{amsfonts}
\usepackage{amssymb}
\usepackage{graphicx}
\usepackage{fullpage}
\usepackage{url}
\usepackage{amsthm}
\usepackage{caption}
\usepackage{subcaption}
\usepackage{comment}%
\usepackage[title]{appendix}
\usepackage{cite}
\usepackage[colorlinks=true, citecolor=blue]{hyperref}
\theoremstyle{definition}
\newtheorem{definition}{Definition}[section]
\date{}
\newcommand{\bu}{{\bf u}}
\newcommand{\bP}{\mathbb{P}}
\newcommand{\bE}{\mathbb{E}}
\newcommand{\cred}{\color{red}}
 
\usepackage[margin=1in]{geometry}
\providecommand{\U}[1]{\protect\rule{.1in}{.1in}}
\newtheorem{theorem}{Theorem}

\newtheorem{lemma}{Lemma}
\newtheorem{proposition}{Proposition}

\numberwithin{equation}{section}

\begin{document}
\title{Global Well-posedness of the 2D Stochastic Self-consistent Keller-Segel-Navier-Stokes System with Subcritical Cellular Mass }

\author{Fanze Kong\footnote{Department of Applied Mathematics, University of Washington, Seattle, 98195, WA, USA, email address: {fzkong@uw.edu}.}, Chen-Chih Lai \footnote{email address: {cclai.math@gmail.com}} and Krutika Tawri\footnote{Department of Applied Mathematics, University of Washington, Seattle, 98195, WA, USA, email address: {ktawri@uw.edu}.} }

\maketitle

 \begin{abstract}

 We consider a stochastic Keller-Segel-Navier-Stokes system in $\mathbb R^2$ driven by a multiplicative white-in-time stochastic forcing.
  The model describes the collective motion of cells in an ambient stochastic fluid flow, where the cells are attracted by a chemical substance and transported by the ambient fluid velocity, and the fluid motion is self-consistently driven by forces induced by the cells. We prove the  existence of a unique mild solution globally-in-time to the two-dimensional stochastic Keller-Segel-Navier-Stokes system with subcritical mass. 
\\
\\
{\sc 2020 MSC}: {Primary: 35B99, 35A01, 35M12; Secondary: 35Q35, 35Q92, 76D05.}\\
{\sc Keywords:} Stochastic Chemotaxis-fluid Models; Mild Solutions; Global Existence; Uniqueness.
\end{abstract}

\section{Introduction}
In this paper, we investigate the following two-dimensional stochastic Keller-Segel-Navier-Stokes (KS-NS) system:
\begin{align}\label{PKSNS-time-dependent}
\left\{\,
\begin{array}{ll}
dn= \Delta ndt - \nabla\cdot(n\nabla c)dt - {\bf u}\cdot\nabla ndt,&x\in\mathbb R^2,t>0,\\
0= \Delta c+ n, &x \in\mathbb R^2,t>0,\\
d{\bf u}+(({\bf u}\cdot\nabla ){\bf u} + \nabla P)dt = \Delta {\bf u}dt +  n\nabla cdt+{\bf f}({\bf u})dW_t,\quad \nabla\cdot {\bf u} = 0,&x\in\mathbb R^2,t>0,\\
(n,{\bf u})(\cdot,0)=(n_0,{\bf u}_0),&x\in\mathbb R^2,
\end{array}
\right.
\end{align}
where $n = n(x,t) : \mathbb{R}^2 \times \mathbb{R}^+ \to \mathbb{R}^+$, 
$c = c(x,t) : \mathbb{R}^2 \times \mathbb{R}^+ \to \mathbb{R}^+$, 
${\bf u} = {\bf u}(x,t) : \mathbb{R}^2 \times \mathbb{R}^+ \to \mathbb{R}^2$, and 
$P = P(x,t) : \mathbb{R}^2 \times \mathbb{R}^+ \to \mathbb{R}$ represent the cell density, the  chemical concentration, the fluid velocity field, and the pressure, respectively. We supplement the problem with initial data $(n_0,{\bf u}_0)$ that is assumed to satisfy $\nabla \cdot {\bf u}_0 = 0$.  Here $\{W_t,\, t \ge 0\}$ is a Wiener process and ${\bf f}({\bf u})dW_t$ represents an external random driving force. The model \eqref{PKSNS-time-dependent} describes the collective biased motion of chemotactic cells along chemical concentration gradients in a noisy fluid environment, where the coupling $n\nabla c$ represents the friction exerted by the fluid on the moving cells,
and acts on the fluid as an anti-force.

System (\ref{PKSNS-time-dependent}) is  a coupled system consisting of the stochastic incompressible Navier-Stokes (NS) equation and the Keller-Segel model. When the transport term ${\bf u}\cdot \nabla n$ is equal to zero  identically, 
the  evolution equations for $n$ and $c$ in \eqref{PKSNS-time-dependent} are reduced to the classical 
Patlak-Keller-Segel (PKS) system \cite{patlak1953random,Keller1970,Keller1971}, which serves as a paradigm for describing the traveling bands of 
\textit{E.~coli}.  The deterministic PKS system and its variants have been 
extensively studied in the literature; we refer the reader to the 
representative survey articles \cite{hillen2009user,horstmann2003,horstmann2004}.  One of the most celebrated results in the study of the PKS model is the so-called ``chemotactic collapse'' phenomenon.  More precisely, there exists a critical threshold $8\pi$ such that if the initial 
cellular mass  $M:=\int_{\mathbb{R}^{2}} n(x,0)\,dx < 8\pi$, then the free-energy solution to the PKS system is globally well-posed \cite{blanchet2006two,dolbeault2004optimal}. Otherwise, if the initial mass
$M>8\pi$, then the PKS system admits solutions that can blow-up in finite time 
\cite{nanjundiah1973,childress1981,herrero1996,senba2000some,wang2002steady}.   For the critical case $M=8\pi$, global-in-time existence of solutions to the PKS system is established in \cite{biler20068pi} for the radial case and \cite{velazquez2004point} for the general case.  In particular, Blanchet et al. \cite{blanchet2008infinite} constructed an infinite-time blow-up free-energy solution with finite second moment.  Recently, refined blow-up profile and stability properties were further investigated in \cite{davila2020existence} by Davila et al. 
The study of the stochastic NS equations also remains an active and significant area of research in fluid mechanics, with an extensive body of literature devoted to the subject. For results concerning the existence and uniqueness of solutions to the stochastic NS equations, we refer the reader to the relevant works
\cite{flandoli1995martingale,mikulevicius2004stochastic,da2003ergodicity,hairer2008spectral}.

{ The KS-NS model is more challenging to study than the PKS model due to the presence of stronger nonlinearities.}
The deterministic KS-NS system (counterpart to \eqref{PKSNS-time-dependent} obtained by setting ${\bf f}={\bf 0}$) was proposed by Y. Gong and S. He \cite{SimingHe},  and has been extensively studied in recent years; see \cite{SimingHe,lai2021global,kong2024global,he2023enhanced} for related works.  In particular, under the subcritical cellular mass condition, global-in-time existence of classical solutions, was established on the torus $\mathbb T^2$, as well as in $\mathbb{R}^2$, in \cite{SimingHe}.  Moreover,  Lai et al. in \cite{lai2021global} show global existence of the solution to the KS-NS equation with critical cellular mass. In addition, Huang and Shen \cite{huang2023bound} proposed a structure-preserving numerical scheme to compute the numerical solutions of  the deterministic KS-NS equation.  We remark that the external force in the ${\bf u}$-equation of (\ref{PKSNS-time-dependent}) is given by $n\nabla c,$ which represents the friction exerted by the cells.  Consequently, the model is referred to as the self-consistent KS-NS equation.   If the external force in the NS equation is given by $n\nabla\phi$ for some known function $\phi$, {which typically represents the effects of gravity on the fluid equations}, then the corresponding model is non-self-consistent.  For {results regarding} the solvability of the non-self-consistent KS-NS model, we refer the reader to \cite{duan2010global,duan2014note,kozono2016existence,winkler2016global,winkler2014stabilization}.

When modeling bacterial dynamics, it is more realistic to account for environmental randomness and stochastic external influences, which naturally leads to the study of the stochastic KS–NS system. This perspective is particularly important because such random effects can play a significant role in the system’s behavior. However, incorporating multiplicative noise introduces substantial analytical challenges, especially in establishing global well-posedness.
  For the investigation of stochastic {\it non-self-consistent} KS-NS equation, we refer the reader to   \cite{zhai20202d,zhang2025keller}. To our knowledge, there have been no results devoted to the study of stochastic {\it self-consistent} KS-NS equations. Motivated by this, we investigate the Cauchy problem for the two-dimensional KS-NS system (\ref{PKSNS-time-dependent}), where the coupling force is self-consistent and in the NS equation the force is induced by the friction between the fluid flow and the cell.

The main goal of this paper is to establish the global existence of a unique { smooth mild solution, preserving the free-energy structure,} to the stochastic KS-NS system (\ref{PKSNS-time-dependent}) under the subcritical mass condition $M<8\pi$.

In contrast to \cite{zhai20202d}, we consider the influence of the frictional forces  $n\nabla c$ which leads to a stronger nonlinearity and thus a stronger singular behavior in the analysis of global well-posedness for (\ref{PKSNS-time-dependent}). Our techniques rely on first proving the existence of a local-in-time solution and then extending this solution globally-in-time. In our proof we introduce an approximation system using a cut-off function. We prove the existence of a unique solution to the approximation system via a fixed-point argument that utilizes Ito calculus in $L^p$-spaces. The cut-off is eventually resolved by employing a stopping time argument, leading to a maximal local pathwise solution. We then prove that the local-in-time solution enjoys a higher regularity using the semi-group estimates. To obtain global-in-time existence, we derive delicate entropy bounds. The use of entropy estimates is novel, compared to the other works such as \cite{zhai20202d}, due to the nonlinearities in the present setting. This argument is a stochastic analogue of the fact that the deterministic KS-NS system admits a free-energy functional given by
\begin{align}\label{freeenergy}
\mathcal{J}[n,u]
=
\int_{\mathbb R^2}
\left(
n\ln n
-
\frac{1}{2}nc
+
\frac{1}{2}|{\bf u}|^2
\right)dx,
\end{align}
where the first term is the entropy of the cell density, the second term represents the potential energy, and the third term denotes the kinetic energy of the fluid velocity field.  However, due to the fact that we pose our problem on the unbounded spatial domain $\mathbb{R}^2$, the traditional free-energy does not yield the desired
bounds (almost surely) due to the absence of a lower bound for $\int_{\mathbb{R}^2}n\ln n$. Hence, we define a modification of the free energy \eqref{freeenergy} which cuts off the growth of $\ln n$. Then, we bootstrap the bounds obtained using the modified free energy to obtain global-in-time bounds for more regular norms. Finally we show that the global solution we constructed has the desired free-energy \eqref{freeenergy} structure.

\subsection{Definitions and Main Results}
In this section, we introduce the definition of a (pathwise) mild solution and state our main results concerning well-posedness of \eqref{PKSNS-time-dependent}. We begin by setting up the mathematical framework and give a few basic preliminaries.

Let $A$ be the realization of the Stokes operator $-\mathcal {P}\Delta$, where 
$\mathcal {P}$ denotes the Helmholtz projection from $L^{p}(\mathbb{R}^{2})$, with $1<p<\infty$, onto
\[
L^p_\sigma(\mathbb{R}^2) := \{ {\bf u} \in L^p(\mathbb{R}^2) : \text{div}\, {\bf u} = 0 \}:=\overline{C^{\infty}_{0,\sigma}(\mathbb{R}^2)}^{\Vert\cdot\Vert_{L^p(\mathbb{R}^2)} },
\]
where $C_{0,\sigma}^{\infty}(\mathbb{R}^2):=\{{\bf u} \in C_0^{\infty}(\mathbb{R}^2) : \text{div}\, {\bf u} = 0\}.$  We denote the domain of this operator by,
\[
D(A^\beta;p) =\big( \dot{W}^{2\beta,p}(\mathbb{R}^2 ) \;\cap\; L^p(\mathbb{R}^2)\big)_{\sigma},\qquad \forall \beta\in[0,1].
\]
For simplicity, we set $\|\cdot\|_{\beta} := \|\cdot\|_{D(A^{\beta};p)}.$   In what follows, $\{e^{t\Delta}\}_{t \ge 0}$ and $\{e^{-tA}\}_{t \ge 0}$ will denote the heat semigroup and the Stokes semigroup on $L^p(\mathbb{R}^2)$ and $L^p_\sigma(\mathbb{R}^2)$, respectively. 

To define the stochastic integral appearing on the right hand side of \eqref{PKSNS-time-dependent}, we consider the stochastic basis $(\Omega, \mathcal{F}, \{\mathcal{F}_t\}_{t \ge 0}, \mathbb{P})$, where the filtration $\{\mathcal{F}_t\}_{t \ge 0}$ satisfies the so-called normal/usual conditions (see e.g. \cite{PR07}).
Let $U$ be a real  separable Hilbert space with complete orthonormal basis denoted by $\{e_j\}_{j\ge1}$. We assume that $\{W_t,\, t \ge 0\}$ is a $U$-valued  $ \{\mathcal{F}_t\}_{t \ge 0}$-Wiener process.

Let $S$ be any Banach space and $\mathcal{N}(0,T; S)$ be the space of (equivalence classes of) functions $\xi : [0,T] \times \Omega \to S$ which are progressively measurable.  For $q \in [1, \infty)$, we set

\begin{equation*}
\mathcal{M}^q(0,T; S)
=
\left\{
\xi \in \mathcal{N}(0,T; S) :
\mathbb{E} \int_0^T \|\xi(s)\|^q \, ds < \infty
\right\}.
\tag{2.7}
\end{equation*}

Denote by $\gamma\left(U, X\right)$ the space of $\gamma$-radonifying operators 
from $U$ to $X$ with Banach space $X$, equipped with the norm  
$$\|{\bf R}\|_{\gamma(U; X)}
=\|\Big(\sum\limits_{j \ge 1}|{\bf R}e_j|^2\Big)^{1/2}\|_{X}.$$ 
For example, if  $X=D(A^\beta;p)$, then
$$\|{\bf R}\|_{\gamma(U; D(A^{\beta};p))}
=\|\Big(\sum\limits_{j \ge 1}|{\bf R}e_j|^2\Big)^{1/2}\|_{L^p}+\|\Big(\sum\limits_{j \ge 1}|A^\beta({\bf R}e_j)|^2\Big)^{1/2}\|_{L^p}.$$

Now we describe the assumptions we impose on the noise coefficient ${\bf f}: D(A^\beta;p) \rightarrow \mathcal \gamma(U,D(A^\beta;p))$ appearing in the stochastic integral in \eqref{PKSNS-time-dependent}.  Denoting \[
{\bf f}_j({\bf u}) := {\bf f}({\bf u})e_j\in D(A^\beta;p),
\]
we assume that ${\bf f}$ satisfies the following conditions:

\medskip

\noindent{(H1).}  
There exists a positive constant $K$ such that for ${\bf u} \in D(A^\beta;p)$,
\[
\|\Big(\sum\limits_{j \ge 1}|{\bf f}_j({\bf u})|^2\Big)^{1/2}\|^q_{L^p}\leq K\big(1+\|{\bf u}\|^q_{L^p}\big), \text{ and } \|\Big(\sum\limits_{j \ge 1}|A^{\beta}{\bf f}_j({\bf u})|^2\Big)^{1/2}\|^q_{L^p}\leq K\big(1+\|A^{\beta}{\bf u}\|^q_{L^p}\big), 
\]
where constant $K$ depends on
 $\beta\in [0,1]$, $q>1$ and $p>1.$

\noindent{(H2).}  
There exists a positive constant $K$ such that for all ${\bf u}_1, {\bf u}_2\in D(A^\beta;p)$, 
\[
\|\Big(\sum_{j\geq 1}|{\bf f}_j({\bf u}_1)-{\bf f}_j({\bf u}_2)|^2\Big)^{1/2}\|^q_{L^p}
\le K \|{\bf u}_1-{\bf u}_2\|^q_{L^p},
\]
and 
\[ \|\Big(\sum_{j\geq 1}|A^{\beta}({\bf f}_j({\bf u}_1)-{\bf f}_j({\bf u}_2))|^2\Big)^{1/2}\|^q_{L^p}\leq K\|A^{\beta}({\bf u}_1-{\bf u}_2)\|^q_{L^p}, 
\]
where constant $K$ may depend on
 $\beta\in [0,1]$, $q>1$ and $p>1.$

\medskip

\medskip

We are now in a position to give define different notions of solutions.

\begin{definition}\label{def31}
Consider a filtered probability space $(\Omega, \mathcal{F}, \{\mathcal{F}_t\}_{t \ge 0}, \mathbb{P})$ with {initial data} $(n_0,{\bf u}_0)\in L^p(\mathbb R^2)\times (L_{\sigma}^s(\mathbb R^2)\cap \dot{W}^{1,r}(\mathbb R^2))$ for some $p>1$, $s> 2$ and  $r>2$.
We say that $(n, {\bf u})$ is a \textbf{mild solution} of system (\ref{PKSNS-time-dependent}) if
$(n, {\bf u})\in \big(\mathcal M^{q}(0,T;L^p(\mathbb R^2))\cap C(0,T; L^p(\mathbb R^2))\big)\times  \big(\mathcal M^{q}(0,T;L^p(\mathbb R^2)\cap\dot{W}^{1,r}(\mathbb R^2))\cap C(0,T; L^s_{\sigma}(\mathbb R^2) \cap\dot{W}^{1,r}(\mathbb R^2))\big)$ with some $q\geq 2$  satisfies
\begin{align}\label{3p1mar}
n(t) &= e^{t\Delta} n_0
      - \int_0^t e^{(t-s)\Delta} \big({  \nabla \cdot(n(s){\bf u}(s))}\big)\,ds
      - \int_0^t e^{(t-s)\Delta} \nabla\cdot\big(\,n(s)\nabla c(s)\big)\,ds, 
\end{align}
\begin{align}\label{3p2mar}
{\bf u}(t) &= e^{-t A} {\bf u}_0
      - \int_0^t e^{-(t-s) A} \mathcal{P}\big(({\bf u}(s)\cdot\nabla){\bf u}(s)\big)\,ds
      + \int_0^t e^{-(t-s) A} \mathcal{P}\big(n(s)\nabla c(s)\big)\,ds\nonumber \\
     &\quad + \int_0^t e^{-(t-s) A} \mathcal P{\bf f}({\bf u}(s))\,dW_s,
\end{align}
 and $-\Delta c=n,$ $\mathbb P$-a.s.
\end{definition}
In this article, we will prove the existence of a local-in-time mild solution to the system \eqref{PKSNS-time-dependent}.

Before stating our main result, we remark that the cellular density $n\in L^1(\mathbb R^2)$ for all $t>0$ if the initial density $n_0\in L^1(\mathbb R^2).$  Indeed, this follows from the fact that since the velocity field  ${\bf u}$ is divergence-free, the cellular mass, given below, is formally conserved in KS-NS system (\ref{PKSNS-time-dependent}), 
\[
\int_{\mathbb R^2} n(x,t)\,dx
=
\int_{\mathbb R^2} n_0(x)\,dx
=: M,\text{ for all }t\in(0,\infty).
\]
We will now state the main result of this paper.
\begin{theorem}\label{thm1}
   Assume that the deterministic initial condition satisfies
\begin{align}\label{regularic}
& n_0 \in  L^1(\mathbb R^2)\cap W^{2,\hat p}(\mathbb R^2)\cap C^2(\mathbb R^2), ~n_0\ln(1+|x|)\in L^1(\mathbb R^2), ~ n_0>0 \ \text{in}\ \mathbb R^2, \nonumber\\
& {\bf u}_0 \in W^{2,\hat p}(\mathbb R^2),~\text{div~} {\bf u}_0 = 0
\end{align}
for any $\hat p>1$.
Suppose that the noise coefficient ${\bf f}$ satisfies the two conditions (H1)-(H2) on growth and Lipschitz continuity. If the initial cellular mass is subcritical, that is we have
\begin{align}\label{smallness_condition_n0}
M:=\int_{\mathbb R^2}n_0dx<8
\pi,
\end{align}
then there exists a unique global-in-time mild solution
\begin{align*}
  n \in L^{2}(\Omega;C^1([0,\infty); C^2(\mathbb R^2 ))), ~~ {\bf u}\in  L^{2}(\Omega; L^\infty(0,\infty; W^{2,2+\epsilon}(\mathbb R^2))),
\end{align*}
with $\epsilon>0$ sufficiently small, 
to the stochastic system (\ref{PKSNS-time-dependent}) in the sense of Definitions \ref{def31}. Moreover, this solution also agrees with the stochastic counterpart of the so-called `free-energy' solution as it satisfies the following bounds: 
\begin{align}\label{energybounds}
   & \mathbb E\left[\sup_{t\leq T}  |\mathcal{J}[n,{\bf u}](t) |^k\right]
+ \bigg[\int_0^T \int_{\mathbb{R}^2}
\left(
2n|\nabla(\ln n - c)|^2
+ \frac12 |\nabla {\bf u}|^2
\right)
dx\,ds\bigg]^k
\le
C_{k,T},
\end{align}
for $k\geq 1$, $\mathcal J[n,{\bf u}]$ is defined in (\ref{freeenergy}) and $C_{k,T }>0$ is a constant depending on $k$, $T$ and   the initial data.
\end{theorem}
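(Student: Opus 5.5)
The proof will follow the scheme outlined in the introduction: a truncated system, a stopping-time argument for a maximal local solution, and then a priori bounds that rule out finite-time blow-up.

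\textbf{Step 1: truncated system.} Fix a smooth cut-off $\theta_R:[0,\infty)\to[0,1]$ with $\theta_R\equiv1$ on $[0,R]$ and $\theta_R\equiv 0$ on $[2R,\infty)$. In the mild formulation (\ref{3p1mar})--(\ref{3p2mar}) we multiply each nonlinearity by $\theta_R(\|n\|_{L^p}+\|{\bf u}\|_{L^s\cap\dot W^{1,r}})$. The chemical gradient is $\nabla c=\nabla(-\Delta)^{-1}n$, and by Hardy--Littlewood--Sobolev it is bounded in $L^{2p/(2-p)}$ when $1<p<2$; for larger $p$ we interpolate using $n\in L^1\cap L^p$. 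We then solve the truncated system by a contraction in $\mathcal M^q(0,T;\cdot)$ together with $L^q(\Omega;C([0,T];\cdot))$:
\begin{itemize}
\item the deterministic terms are handled with the smoothing estimates $\|\nabla e^{t\Delta}\|_{L^a\to L^b}$ and $\|A^{\beta}e^{-tA}\mathcal P\|$;
\item the stochastic convolution is handled by the factorization method together with the Burkholder inequality in the UMD space $L^p$, using (H1)--(H2).
\end{itemize}
Uniqueness and pathwise continuity follow from the same estimates.

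\textbf{Step 2: maximal local solution and regularity.} Let $\tau_R$ be the first time the norm reaches $R$. Solutions for different $R$ agree up to $\tau_R$, which yields a maximal local pathwise mild solution on $[0,\tau)$, with $\tau=\lim_R\tau_R$. We bootstrap regularity with the semigroup estimates, using $W^{2,\hat p}$ data and (H1)--(H2) with $\beta$ close to $1$. This gives ${\bf u}\in W^{2,2+\epsilon}\subset C^{1}$. Since the $n$-equation is pathwise deterministic given ${\bf u}$, parabolic Schauder theory then gives $n\in C^1_tC^2_x$. The maximum principle gives $n>0$, and integrating the divergence-form equation gives conservation of the mass $M$.

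\textbf{Step 3: modified free energy and global existence (main obstacle).} By It\^o's formula for $\int\frac12|{\bf u}|^2$ combined with the pathwise computation for $\int(n\ln n-\frac12 nc)$, the transport and coupling terms cancel:
\[
\int n\,{\bf u}\cdot\nabla c \;-\; \int {\bf u}\cdot n\nabla c=0 .
\]
This leaves
\[
d\mathcal J+\Big(\int n|\nabla(\ln n-c)|^2+\|\nabla{\bf u}\|_{L^2}^2\Big)dt=\tfrac12\|{\bf f}({\bf u})\|^2_{\gamma}\,dt+\langle{\bf u},{\bf f}({\bf u})dW\rangle .
\]
On $\mathbb R^2$, however, $\int n\ln n$ has no lower bound. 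We therefore work with the modified energy obtained by adding $\int n\ln(1+|x|^2)$. Its time derivative is controlled by $M$ and by $\|{\bf u}\|_{L^2}$, using $|\nabla\ln(1+|x|^2)|\le 2$ and the entropy dissipation. With this weight the negative part of $n\ln n$ is bounded by the weighted moment plus $M$.

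For $M<8\pi$, the logarithmic HLS inequality gives, for some $\delta>0$ depending on $8\pi-M$,
\[
\int n\ln n-\tfrac12\int nc\ \ge\ \delta\int n\ln n - C(M).
\]
Combining this with the Burkholder--Davis--Gundy inequality for the martingale term, the linear growth (H1), and Gronwall's lemma, we obtain
\[
\mathbb E\sup_{t\le T\wedge\tau_R}\Big(\int n|\ln n|+\int n\ln(1+|x|)+\|{\bf u}\|_{L^2}^2\Big)^k\le C_{k,T}
\]
uniformly in $R$, together with the dissipation bounds in (\ref{energybounds}). The same moment estimates give the dissipation bound raised to the power $k$.

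\textbf{Step 4: bootstrap to higher norms.} From the entropy bound, the standard argument for the deterministic Keller--Segel system (the test $\int(n-K)_+^2$, using that $\int_{n>K} n$ is small by the entropy) yields $L^2$ and then $L^p$ bounds on $n$. These make the forcing $n\nabla c$ lie in $L^2_tL^{p}$. Stochastic maximal regularity for ${\bf u}$ then gives moment bounds on $\|{\bf u}\|_{L^s\cap\dot W^{1,r}}$ and on $\|n\|_{L^p}$ up to $T\wedge\tau_R$. Chebyshev's inequality then implies $\mathbb P(\tau_R\le T)\to0$ as $R\to\infty$, so $\tau=\infty$ almost surely. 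Higher regularity is propagated as in Step 2, and passing $R\to\infty$ in $\mathcal J$ (via Fatou's lemma) yields (\ref{energybounds}).

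The delicate point is Step 3: closing the weighted entropy estimate under the stochastic forcing, with a log-HLS constant that is uniform in $R$.
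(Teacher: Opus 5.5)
Your Steps 1--2 follow the paper's construction. The one difference is that you use factorization and Burkholder's inequality in $L^p$ where the paper applies It\^o's formula to $\|{\bf Z}\|_{L^{2\bar q}}^{2\bar q}$. Step 3, however, has a genuine gap. With $w=\ln(1+|x|^2)$,
\[
\frac{d}{dt}\int_{\mathbb R^2} n w\,dx=-\int_{\mathbb R^2} n\nabla(\ln n-c)\cdot\nabla w\,dx+\int_{\mathbb R^2} n\,{\bf u}\cdot\nabla w\,dx .
\]
The first term is absorbed by half the entropy dissipation plus $M$. The advection term, however, is only bounded by $\int n|{\bf u}|\,dx$, and this is not controlled by $M$, $\|{\bf u}\|_{L^2}$ and the dissipations:
\begin{itemize}
\item H\"older with ${\bf u}\in L^2$ needs $n\in L^2$.
\item Even using ${\bf u}\in H^1\subset L^s$ one needs $n\in L^a$ for some $a>1$, which $L\log L$ does not provide.
\item The natural Orlicz duality $L\log L$--$\exp L$ only yields $\int n|{\bf u}|\lesssim(1+\int n\ln^+n)\|{\bf u}\|_{H^1}$. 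After absorbing $\|\nabla{\bf u}\|_{L^2}$, this leaves a term quadratic in the entropy, which does not close under Gr\"onwall.
\end{itemize}
Your scheme is therefore circular. The lower bound on $\mathcal J$ needs the moment, the moment needs higher integrability of $n$, and that (your Step 4) needs the entropy bound.

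The missing idea is the paper's modified energy $\mathcal E_\Gamma$ in \eqref{modified}, where $\ln n$ is replaced below $\eta=\min\{1,\delta/M\}$ by its quadratic Taylor polynomial. This buys three things:
\begin{itemize}
\item $n\Gamma(n)\ge(\ln\eta-\frac32)n$, so log-HLS with $M<8\pi$ bounds $\mathcal E_\Gamma$ from below and makes it control $\int n\ln^+n+\|{\bf u}\|_{L^2}^2$ with no spatial weight at all.
\item The advection terms vanish exactly, since ${\bf u}\cdot\nabla n\,(\Gamma(n)+n\Gamma'(n))={\bf u}\cdot\nabla(n\Gamma(n))$.
\item The only cost is $\int_{\{n<\eta\}}\nabla n\cdot\nabla c\,dx\le\eta M\le\delta$.
\end{itemize}
The moment $\int n\ln(1+|x|^2)$ and the true $\mathcal J$ are handled only after $\tau=\infty$ is established, when $\int n|{\bf u}|$ is bounded by $L^p$ norms of $n$ and ${\bf u}$.

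Your Step 4 also needs a second localization. The truncation level $K\sim\exp(C\sup_t\int n\ln^+n)$ makes the $L^2$ bound on $n$ exponential in the entropy. Polynomial entropy moments therefore give no moment bounds uniform in $R$, so ``Chebyshev implies $\mathbb P(\tau_R\le T)\to0$'' does not follow as stated. The paper instead stops at $T_R$, the first time $\int n\ln^+n+\|{\bf u}\|_{L^2}^2$ exceeds $R$, so that $K\le e^{CR}$ there. It derives $R$-dependent moment bounds for the $L^p$ norms and concludes from
\[
\mathbb P(\tau\le T)\le\mathbb P(\tau\le T\wedge T_R)+\mathbb P(T_R\le T).
\]
The first term vanishes by Chebyshev in $m$, and the second is $O(1/R)$ by the uniform entropy bound.
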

Our aim for the following sections is to prove Theorem \ref{thm1}.  The paper is organized as follows.  In Section \ref{sect2}, we present the necessary preliminaries, including semi-group estimates which are foundational to our existence proof. Section \ref{sect3} is devoted to the local-in-time existence of a mild solution to (\ref{PKSNS-time-dependent}). This solution is constructed via a fixed-point argument. In Section \ref{sect4}, we extend the local solution to a global-in-time mild solution. The extension is time relies on the use of the entropy estimates and a stopping-time argument.

\section{Preliminaries}\label{sect2}
In this section, we state well-known inequalities that will be used in deriving appropriate estimates for the fluid velocity and the cell density in Sections \ref{sect3} and \ref{sect4}.  
Firstly, we introduce the following notation: the inequality
\[
A \lesssim B,
\] 
represents the existence of a universal positive constant $C$ such that $A \le C B$. 

Our first observation is that second equation in \eqref{PKSNS-time-dependent} is a Poisson equation. We thus consider the solution which is determined by the Newtonian potential. In particular, we choose
\begin{align}\label{csolution_convolution}
c=-(\Delta^{-1})n:=-
\frac{1}{2\pi}
 \ln |\cdot|\ast n.
 \end{align}
 
 For this solution, by using Hardy-Littlewood-Sobolev inequality,  we obtain the following bounds:  
\begin{lemma}[\hspace{-0.12cm}\text{\cite[Lemma 2.5]{Nagai-2011}}]\label{lemma28}
For any $\hat q\in (2,\infty),$ one has the following a-priori estimate:
\begin{equation}
\|\nabla \tfrac{1}{2\pi} \ln \frac{1}{|\cdot|} * f\|_{L^{\hat q}}
\le C \|f\|_{L^{\frac{2\hat q}{2+\hat q}}}
\label{eq:3.3},  
\end{equation}
where $C>0$ is a constant depending on $\hat q.$

In addition, let $2 < q \le \infty$. Then for any $f \in L^1 \cap L^{q}$, we have
\[
\|\nabla \tfrac{1}{2\pi} \ln \frac{1}{|\cdot|} * f\|_{L^\infty}
\le C_q \|f\|_{L^1}^{\frac{q-2}{2(q-1)}} \|f\|_{L^q}^{\frac{q}{2(q-1)}},
\]
where
\[
C_q = (2\pi)^{1/2} 
\left(\frac{q-1}{q-2}\right)^{1/2} \bigg[
\left(\frac{q}{q-1}\right)^{\frac{q-2}{2(q-1)}}+ \left(\frac{q}{q-1}\right)^{-\frac{q}{2(q-1)}}\bigg].
\]
\end{lemma}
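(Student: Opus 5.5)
The statement to prove is Lemma~\ref{lemma28}: two bounds for the field $\nabla\frac{1}{2\pi}\ln\frac{1}{|\cdot|}*f$. The plan is to write this field as a convolution with an explicit kernel $K$ satisfying $|K(x)|=\frac{1}{2\pi|x|}$. Indeed, differentiating the Newtonian potential gives
\[
\nabla \tfrac{1}{2\pi}\ln\tfrac{1}{|\cdot|}*f=K*f,\qquad K(x)=-\frac{x}{2\pi|x|^2}.
\]
This is legitimate for $f\in L^1\cap L^q$ by dominated convergence after mollification, since $\nabla\ln|x|$ is locally integrable in $\mathbb R^2$. Both estimates then reduce to bounds on convolution with the Riesz-type kernel $|x|^{-1}$.

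For the first estimate, note that $|K*f|\le \frac{1}{2\pi}\,|\cdot|^{-1}*|f|=c\,I_1|f|$, where $I_1$ is the Riesz potential of order $1$ in dimension $2$. The Hardy--Littlewood--Sobolev inequality bounds $I_1:L^{r}\to L^{\hat q}$ whenever $1<r<2$ and $\frac1{\hat q}=\frac1r-\frac12$. Solving for $r$ gives $r=\frac{2\hat q}{2+\hat q}$. The condition $\hat q\in(2,\infty)$ corresponds exactly to $r\in(1,2)$, so the estimate \eqref{eq:3.3} follows with $C=C(\hat q)$ coming from the HLS constant.

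For the $L^\infty$ bound, fix $x$ and a radius $R>0$, and split the integral $\frac{1}{2\pi}\int|x-y|^{-1}|f(y)|\,dy$ into $|x-y|<R$ and $|x-y|\ge R$.
\begin{itemize}
\item On the far region, $|x-y|^{-1}\le R^{-1}$, so this part is at most $\frac{1}{2\pi R}\|f\|_{L^1}$.
\item On the near region, apply H\"older with exponents $q$ and $q'=\frac{q}{q-1}$. Because $q>2$ we have $q'<2$, so $|y|^{-q'}$ is integrable near the origin, and
\[
\Big(\int_{|z|<R}|z|^{-q'}dz\Big)^{1/q'}=\Big(\frac{2\pi}{2-q'}\Big)^{1/q'}R^{\frac{2-q'}{q'}}.
\]
Note that $2-q'=\frac{q-2}{q-1}$, which is where the factor $\frac{q-1}{q-2}$ in $C_q$ comes from. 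The near part is therefore bounded by a constant times $R^{1-2/q}\|f\|_{L^q}$.
\end{itemize}

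Adding the two parts gives $\|K*f\|_{L^\infty}\le a R^{-1}\|f\|_{L^1}+bR^{1-2/q}\|f\|_{L^q}$, where $b$ is the constant from the near-region bound. Minimizing over $R$, the optimum is reached at $R\sim(\|f\|_{L^1}/\|f\|_{L^q})^{q/(2(q-1))}$. Substituting this $R$ back gives exactly the interpolated powers $\|f\|_{L^1}^{\frac{q-2}{2(q-1)}}\|f\|_{L^q}^{\frac{q}{2(q-1)}}$; one checks that the exponents sum to $1$ and scale correctly. The case $q=\infty$ is the same argument with $q'=1$.

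The main obstacle I expect is not the structure of the argument but reproducing the precise constant $C_q$. This depends on carefully choosing $R$ to balance the two terms, rather than to minimize their sum exactly, which is what produces the bracket of two powers of $\frac{q}{q-1}$. Since the lemma is quoted from \cite{Nagai-2011}, I would accept any constant depending only on $q$ if the exact form proves fiddly. For the subsequent use in the paper, only the exponents matter.
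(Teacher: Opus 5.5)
Your argument is correct and is the standard one. The paper gives no proof of this lemma and simply cites Nagai. The argument there is the same as yours: the pointwise bound $|\nabla\tfrac{1}{2\pi}\ln\tfrac{1}{|\cdot|}*f|\le\tfrac{1}{2\pi}|\cdot|^{-1}*|f|$, then Hardy--Littlewood--Sobolev for the $L^{\hat q}$ bound, and a near/far split at radius $R$ for the $L^\infty$ bound.

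The one thing you left open is the constant, and it is easy to settle. Your bound is $aR^{-1}\|f\|_{L^1}+bR^{1-\frac2q}\|f\|_{L^q}$ with
\[
a=\tfrac{1}{2\pi},\qquad b=\tfrac{1}{2\pi}\Big(\tfrac{2\pi(q-1)}{q-2}\Big)^{\frac{q-1}{q}}=(2\pi)^{-\frac1q}\Big(\tfrac{q-1}{q-2}\Big)^{\frac{q-1}{q}}.
\]
Let $R_0$ be the radius at which the two terms are equal. Their common value is
\[
K=a^{\frac{q-2}{2(q-1)}}\,b^{\frac{q}{2(q-1)}}\,\|f\|_{L^1}^{\frac{q-2}{2(q-1)}}\|f\|_{L^q}^{\frac{q}{2(q-1)}}=(2\pi)^{-\frac12}\Big(\frac{q-1}{q-2}\Big)^{\frac12}\|f\|_{L^1}^{\frac{q-2}{2(q-1)}}\|f\|_{L^q}^{\frac{q}{2(q-1)}},
\]
and taking $R=\lambda R_0$ gives the bound $K(\lambda^{-1}+\lambda^{1-2/q})$.

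Your guess about where the bracket comes from is not right. Exact balancing ($\lambda=1$) gives the factor $2$. The bracket in $C_q$ is what you get from the choice $\lambda=\big(\frac{q}{q-1}\big)^{\frac{q}{2(q-1)}}$.

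In either case the prefactor is $(2\pi)^{-1/2}$, whereas the displayed $C_q$ has $(2\pi)^{1/2}$. The bracket also exceeds $1$. So even the balanced choice gives a constant $2(2\pi)^{-1/2}\big(\frac{q-1}{q-2}\big)^{1/2}<C_q$, and the lemma holds as stated, with room to spare. For $q=\infty$ take $b=1$, the limit of the formula above, and read $\frac{q-1}{q-2}$ as $1$.

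A minor technical remark concerns the identity $\nabla(\tfrac{1}{2\pi}\ln\tfrac{1}{|\cdot|}*f)=K*f$. You justify it for $f\in L^1\cap L^q$, but in the first estimate $f$ is only assumed to lie in $L^{2\hat q/(2+\hat q)}$. Even for $f\in L^1\cap L^q$, the logarithmic convolution itself need not converge without a moment condition such as $f\ln(1+|y|)\in L^1$. The clean fix is to read the left-hand side as $K*f$ from the start. This is legitimate because $\nabla\ln|x|\in L^1_{\mathrm{loc}}$ is the full distributional gradient. The integral $K*f$ converges absolutely almost everywhere under the first hypothesis, since $|\cdot|^{-1}*|f|\in L^{\hat q}$ by HLS, and everywhere under the second, by your near/far estimate.
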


Our local-in-time existence result is based on an application of a fixed point theorem which relies on obtaining appropriate estimates. These estimates rely on the following semigroup estimates of the Laplacian and the Stokes operator:
\begin{lemma}[\hspace{-0.12cm}\text{\cite[Theorem 6.13]{Pazy-book1983};\cite[Lemma 1.3]{winkler2010aggregation}}]\label{lemma:semigroup-Lp-Lq}
Let $1 \le q \le p \le \infty$ with $p \neq 1$ and $q \neq \infty$. Then there exists a constant $C>0$ such that for all $t>0$ the following estimates of the heat and Stokes semigroups hold:
\begin{align}
\| e^{t\Delta} f \|_{L^p} + \|e^{-tA} (\mathcal {P} f )\|_{L^p} &\le C\, t^{-\frac{1}{2}\left(\frac{2}{q} - \frac{2}{p}\right)} \| f\|_{L^q}, \label{eq:3.4a} \\
\| e^{t\Delta} \nabla \cdot {\bf F} \|_{L^p} + \| e^{-tA} (\mathcal{P}\nabla \cdot {\bf F}) \|_{L^p} &\le C\, t^{-\frac{1}{2} - \frac{1}{2}\left(\frac{2}{q} - \frac{2}{p}\right)} \|{\bf F}\|_{L^q} \label{eq:3.4b},\\
\| \nabla(e^{t\Delta} f) \|_{L^p}  &\le C\, t^{-\frac{1}{2}-\frac{1}{2}\left(\frac{2}{q} - \frac{2}{p}\right)} \|f\|_{L^q}.\label{eq:3.4a1}
\end{align}
Moreover, suppose that $0\leq\beta_2\leq \beta_1<+\infty$, then we have
\begin{align}
\| A^{\beta_1}(e^{t\Delta} f) \|_{L^p} + \| A^{\beta_1}(e^{-tA} \mathcal {P} f )\|_{L^p} &\le C\, t^{-\beta_1+\beta_2-\frac{1}{2}\left(\frac{2}{q} - \frac{2}{p}\right)} \|A^{\beta_2}f\|_{L^q}, \label{eq:3.4afractional} \\
\| A^{\beta_1}(e^{t\Delta} \nabla \cdot {\bf F}) \|_{L^p} + \| A^{\beta_1}(e^{-tA} (\mathcal{P}\nabla \cdot {\bf F})) \|_{L^p} &\le C\, t^{-\beta_1+\beta_2-\frac{1}{2} - \frac{1}{2}\left(\frac{2}{q} - \frac{2}{p}\right)} \|A^{\beta_2}{\bf F}\|_{L^q}. \label{eq:3.4bfractional}
\end{align}
\end{lemma}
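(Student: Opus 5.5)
The statement just above is Lemma~\ref{lemma:semigroup-Lp-Lq}, a collection of standard $L^q$--$L^p$ smoothing estimates for the heat and Stokes semigroups on $\mathbb R^2$. I would prove it by reducing everything to explicit kernel bounds for the heat semigroup, and then transfer these to the Stokes semigroup. The transfer rests on the fact that on the whole space $\mathcal P$ commutes with $\Delta$, so that $e^{-tA}\mathcal P f = \mathcal P e^{t\Delta} f = e^{t\Delta}\mathcal P f$. Moreover, $\mathcal P$ is a matrix of Riesz-transform compositions, hence bounded on $L^r$ for $1<r<\infty$.

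The first step is \eqref{eq:3.4a} for the heat part. Write $e^{t\Delta}f = G_t * f$ with $G_t(x)=(4\pi t)^{-1}e^{-|x|^2/4t}$. Young's inequality with $1+\frac1p=\frac1r+\frac1q$ gives
\begin{align*}
\|e^{t\Delta}f\|_{L^p}\le \|G_t\|_{L^r}\|f\|_{L^q}, \qquad \|G_t\|_{L^r}=C\,t^{-(1-\frac1r)}=C\,t^{-\frac12(\frac2q-\frac2p)}.
\end{align*}
The same computation with $\nabla G_t$, using $\|\nabla G_t\|_{L^r}=C\,t^{-\frac12-(1-\frac1r)}$, yields \eqref{eq:3.4a1}. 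Writing $e^{t\Delta}\nabla\cdot {\bf F} = (\nabla G_t)*{\bf F}$ then gives the heat part of \eqref{eq:3.4b}.

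The second step handles the Stokes parts. For $1<q\le p<\infty$, I would write $e^{-tA}\mathcal P f = \mathcal P(e^{t\Delta}f)$ and use the $L^p$-boundedness of $\mathcal P$. The endpoint cases ($q=1$ or $p=\infty$) for the projected terms are the main obstacle, because $\mathcal P$ is unbounded on $L^1$ and $L^\infty$. Here I would instead use that the composite kernel of $\mathcal P e^{t\Delta}$ (the Oseen kernel), and of $\mathcal P e^{t\Delta}\nabla$, obeys pointwise bounds of the form $|K_t(x)|\lesssim (t^{1/2}+|x|)^{-2}$ and $|\nabla K_t(x)|\lesssim (t^{1/2}+|x|)^{-3}$. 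These follow from scaling and Fourier decay of smooth symbols homogeneous of degree $0$ times a Gaussian. The bound for $\nabla K_t$ lies in $L^r$ for all $r\ge1$ with the correct scaling, so Young's inequality again gives \eqref{eq:3.4b}. The bound for $K_t$ is not in $L^1$, so for \eqref{eq:3.4a} at $q=1$ or $p=\infty$ I would either restrict to the non-endpoint range that is actually used later in the paper, or interpolate via the semigroup property through an intermediate $L^r$. This is the delicate point, and I would follow the cited references (Pazy, Winkler) there.

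The third step gives the fractional estimates \eqref{eq:3.4afractional}--\eqref{eq:3.4bfractional}. Since $A^{\beta}$ (equal to $(-\Delta)^{\beta}$ on solenoidal fields) commutes with $e^{t\Delta}$, $\mathcal P$ and $\nabla$, I would write
\begin{align*}
A^{\beta_1}e^{t\Delta}f=\big[(-\Delta)^{\beta_1-\beta_2}e^{\frac t2\Delta}\big]\,e^{\frac t2\Delta}A^{\beta_2}f.
\end{align*}
The operator $(-\Delta)^{\gamma}e^{\frac t2\Delta}$, with $\gamma=\beta_1-\beta_2\ge 0$, has kernel $t^{-\gamma-1}\Phi_\gamma(x/\sqrt t)$, where $\Phi_\gamma$ is integrable; it is smooth with decay $|x|^{-2-2\gamma}$ for noninteger $\gamma$. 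It is therefore bounded on $L^p$ with norm $Ct^{-\gamma}$. Combining this with step one applied to $e^{\frac t2\Delta}$ (and with $\nabla$ for the divergence version) yields the stated exponents. The Stokes versions then follow from step two.
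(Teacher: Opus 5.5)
Your argument is correct, and it follows a different route from the paper. The paper gives no proof at all. It cites Pazy's Theorem 6.13, which gives abstract bounds $\|A^{\alpha}e^{-tA}\|\le M_\alpha t^{-\alpha}$ for analytic semigroups, and Winkler's Lemma 1.3, which gives $L^q$--$L^p$ bounds for the Neumann heat semigroup. Your proof instead exploits the whole-space structure. Since $\mathcal P$ commutes with $\Delta$, you have $e^{-tA}\mathcal P=\mathcal P e^{t\Delta}$ and $A^\beta=(-\Delta)^\beta$ on solenoidal fields. Everything then reduces to scaling of explicit kernels ($G_t$, $\nabla G_t$, the Oseen kernel $K_t$, and $t^{-\gamma-1}\Phi_\gamma(x/\sqrt t)$), which gives constants uniform in all $t>0$. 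The citation buys brevity. Your argument buys a proof that actually fits the setting: Pazy's theorem assumes $0\in\rho(A)$, which fails for the Stokes operator on $\mathbb R^2$, and Winkler's lemma is stated for bounded domains.

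The endpoints you flag as the main obstacle are not delicate. Your bound $|K_t(x)|\lesssim(\sqrt t+|x|)^{-2}$ gives $K_t\in L^r$ with $\|K_t\|_{L^r}=Ct^{-(1-\frac1r)}$ for every $r>1$. In Young's inequality the exponent satisfies $1-\frac1r=\frac1q-\frac1p$:
\begin{itemize}
\item for $q=1<p$ this gives $r=p>1$;
\item for $q<p=\infty$ it gives $r=q'>1$;
\item the only case needing $r=1$ is $p=q$, and there the hypotheses $p\neq1$, $q\neq\infty$ force $1<p<\infty$, where $L^p$-boundedness of $\mathcal P$ suffices.
\end{itemize}
So the exclusions in the statement match exactly what your kernel argument covers, and you need neither to restrict the range nor to defer to the references. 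Your alternative also works: factor $e^{-tA}\mathcal P f=e^{\frac t2\Delta}\mathcal P e^{\frac t2\Delta}f$ through an intermediate $L^s$ with $s\in[q,p]\cap(1,\infty)$, which is nonempty under the same hypotheses.
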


Next, we recall the following estimate which will be used to prove lower bounds for the entropy of the system:
\begin{lemma}[\text{Control of the negative part of the entropy \cite[Lemma 2.9]{lai2021global}}]\label{lemm2mar9}
For any $g$ such that $(1+\ln(1+|x|^2))g \in L^1_+(\mathbb{R}^2)$, we have 
$g\ln^- g \in L^1(\mathbb{R}^2)$ and that
\[
\int_{\mathbb{R}^2} g(x)\ln^- g(x)\,dx
\le
2\int_{\mathbb{R}^2} g(x)\ln(1+|x|^2)\,dx
+ \ln\pi \int_{\mathbb{R}^2} g(x)\,dx
+ \frac{1}{e}.
\]
\end{lemma}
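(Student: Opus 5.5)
The idea is to compare $g$ pointwise with the explicit probability density
\[
h(x):=\frac{1}{\pi(1+|x|^2)^2},\qquad \int_{\mathbb R^2}h\,dx=\int_0^\infty\frac{2r\,dr}{(1+r^2)^2}=\int_0^\infty\frac{du}{(1+u)^2}=1 .
\]
The choice of $h$ is made so that $-\ln h=2\ln(1+|x|^2)+\ln\pi$ is exactly the weight on the right-hand side. I will prove the pointwise inequality
\begin{equation*}
g\ln^- g\;\le\; g\big(2\ln(1+|x|^2)+\ln\pi\big)+\frac1e\,h(x)\qquad\text{for a.e. }x\in\mathbb R^2, \tag{$\ast$}
\end{equation*}
and then integrate it over $\mathbb R^2$. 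Since $\int h=1$, integration gives the stated bound. Integrability of the right-hand side follows from the hypothesis $(1+\ln(1+|x|^2))g\in L^1_+$. Hence $g\ln^-g\in L^1$ as well, because the left-hand side of $(\ast)$ is nonnegative.

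To verify $(\ast)$ I split into three regions, using $g\ge0$ throughout.

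First, on $\{g\ge1\}$ the left-hand side vanishes. The right-hand side is nonnegative because $\ln\pi>0$, so $(\ast)$ holds.

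Second, on $\{h\le g<1\}$ we have $\ln^-g=-\ln g\le-\ln h=2\ln(1+|x|^2)+\ln\pi$. Multiplying by $g\ge0$ gives $(\ast)$ without even needing the $h/e$ term.

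Third, on $\{g<h\}$ (where also $g<1$, since $h\le1/\pi<1$), I write
\[
-g\ln g=g\ln\frac1h+g\ln\frac hg .
\]
The first term is exactly $g\big(2\ln(1+|x|^2)+\ln\pi\big)$. For the second, set $t=g/h\in[0,1)$, so that $g\ln(h/g)=h\,t\ln(1/t)\le h/e$. This uses the elementary bound $\sup_{0\le t\le1}t\ln(1/t)=1/e$, attained at $t=1/e$. Points where $g=0$ are trivial.

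There is no genuine obstacle here. The only delicate point is the choice of the comparison function $h$: it must have $-\ln h$ equal to the prescribed weight, and it must be integrable with total mass $1$ so that the remainder is exactly $1/e$. The density $\pi^{-1}(1+|x|^2)^{-2}$ meets both requirements.
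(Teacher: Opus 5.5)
Your proof is correct and complete. The three cases establish $(\ast)$: $h\le 1/\pi<1$ gives $\{g<h\}\subset\{g<1\}$, and $\ln\pi>0$ handles $\{g\ge 1\}$. The normalization $\int_{\mathbb R^2}h\,dx=1$ is right. Dominating the nonnegative function $g\ln^-g$ by an integrable function gives both the $L^1$ claim and the inequality.

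The paper itself does not prove this lemma; it only cites \cite[Lemma 2.9]{lai2021global}. The usual proof of estimates of this type is the logarithmic-weight analogue of the Blanchet--Dolbeault--Perthame bound. It uses your reference density, but a global convexity step where you use a pointwise one. With $\bar g=g\mathbf{1}_{\{g\le 1\}}$, $m=\int\bar g\,dx$, $d\mu=h\,dx$ and $U=\bar g/h$, Jensen's inequality for $s\mapsto s\ln s$ gives $\int\bar g\ln\bar g\,dx+\int\bar g\,\big(2\ln(1+|x|^2)+\ln\pi\big)dx=\int U\ln U\,d\mu\ge m\ln m\ge -1/e$.

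So the decisive idea is shared: choose $h=\pi^{-1}(1+|x|^2)^{-2}$ so that $-\ln h$ is exactly the weight. The Jensen route applies concavity of $t\mapsto t\ln(1/t)$ to the whole integral. You instead apply only its supremum $1/e$ pointwise on $\{g<h\}$ and discard the nonpositive contribution on $\{h\le g<1\}$.

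The Jensen version is a one-line argument, and its remainder $-m\ln m\le 1/e$ is expressed through the mass of $g$ on $\{g\le 1\}$. Your version is completely elementary and, being a pointwise domination, gives $g\ln^-g\in L^1$ at once. It avoids the small bookkeeping needed to make sense of $\int U\ln U\,d\mu$ and to split it into two separately finite integrals.
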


We shall use the following logarithmic Hardy-Littlewood-Sobolev (HLS) inequality:
\begin{lemma}[\hspace{-0.12cm}\text{\cite{Beckner1993,CL1992}}]\label{logHLSlemma}
 Let $n \in L^1(\mathbb{R}^2)$ with $n \ge 0$ and 
$\int_{\mathbb{R}^2} n \, dx = M>0$, then there exists a constant $C(M)>0$ depending on $M$ such that
\begin{align}\label{logarHLS}
\frac{1}{2\pi}\int_{\mathbb{R}^2} \int_{\mathbb{R}^2} n(x) \ln \frac{1}{|x-y|} n(y) \, dx \, dy
\le \frac{M}{4\pi} \int_{\mathbb{R}^2} n(x) \ln n(x) \, dx + C (M).
\end{align}
\end{lemma}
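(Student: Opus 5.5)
The statement just above is the logarithmic HLS inequality (Lemma \ref{logHLSlemma}). Our plan is to deduce it from the sharp form due to Carlen--Loss and Beckner, stated for the normalized density, and then rescale. That sharp form reads: for $f\ge0$ with $\int_{\mathbb R^2} f\,dx = 1$ and $f\ln f\in L^1$,
\begin{align*}
-\int_{\mathbb R^2}\int_{\mathbb R^2} f(x)\ln|x-y|\,f(y)\,dx\,dy \le \frac12\int_{\mathbb R^2} f\ln f\,dx + C_0,
\end{align*}
with $C_0 = \tfrac12(1+\ln\pi)$. If $\int n\ln n=+\infty$ the claimed inequality is trivial, so we assume $n\ln n\in L^1$. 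We also assume, as in the applications of the lemma, that $n\ln(1+|x|^2)\in L^1$. Then Lemma \ref{lemm2mar9} shows that both sides are well defined: $\int n\ln n$ is finite, and the double integral is well defined because the positive part of $\ln\frac{1}{|x-y|}$ is controlled through the entropy and the negative part through the logarithmic moment.

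Apply the sharp inequality to $f=n/M$. This gives
\begin{align*}
\int\!\!\int n(x)\ln\tfrac{1}{|x-y|}\,n(y)\,dx\,dy = M^2\int\!\!\int f\ln\tfrac{1}{|x-y|}\,f \le \frac{M^2}{2}\int f\ln f + M^2 C_0 .
\end{align*}
Expand the entropy of the normalized density:
\begin{align*}
\int f\ln f = \frac1M\int n\ln n - \ln M .
\end{align*}
Substituting, the double integral is bounded by $\frac{M}{2}\int n\ln n - \frac{M^2}{2}\ln M + M^2C_0$. Multiplying by $\frac1{2\pi}$ yields exactly the coefficient $\frac{M}{4\pi}$ in front of $\int n\ln n$ in \eqref{logarHLS}. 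The remaining term is $C(M)=\frac{M^2}{2\pi}\bigl(C_0-\tfrac12\ln M\bigr)$. It is harmless if this constant is negative, since the lemma only asserts some constant depending on $M$, and one may replace it by its positive part.

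The only delicate point is justifying the sharp inequality for general (non-smooth, possibly unbounded) $n$. Our first approach is approximation. Truncate and mollify $n$, say $n_k=\min(n,k)\mathbf 1_{|x|<k}*\rho_{1/k}$, and renormalize the mass. Then pass to the limit: the entropy converges by convexity together with the moment bound from Lemma \ref{lemm2mar9}, and the double integral converges by monotone and dominated convergence applied separately to the positive and negative parts of the logarithm. We expect this limiting step to be the main obstacle; the rescaling above is purely algebraic. If the approximation proves awkward, we would instead cite the duality proof of Carlen--Loss, which applies directly to the class $n\ln n\in L^1$, $n\ln(1+|x|^2)\in L^1$.
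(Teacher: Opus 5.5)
Your proposal is correct and follows the paper's route: the paper gives no proof beyond citing Beckner and Carlen--Loss, and your rescaling of their normalized sharp inequality (with $C_0=\tfrac12(1+\ln\pi)$, attained by $f=\frac{1}{\pi(1+|x|^2)^2}$) correctly produces the coefficient $\frac{M}{4\pi}$ and the explicit constant $\frac{M^2}{4\pi}(1+\ln\pi-\ln M)$, which matches the standard $C(M)=M(1+\ln\pi-\ln M)$ form after normalization. The approximation step you flag as the main obstacle is not needed, because the cited theorem already applies to the class $n\ln n,\ n\ln(1+|x|^2)\in L^1$; you were right to make these hypotheses explicit, since the statement leaves them implicit.
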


Finally, we recall the Nash inequality:
\begin{lemma}[\hspace{-0.12cm}\text{\cite{nash1958continuity}}]\label{Nashinequality}
For any $g \in L^{1}(\mathbb{R}^{2}) \cap H^{1}(\mathbb{R}^{2})$, there exists a constant $C_N>0$ such that
\begin{equation}\label{nash_formula}
\|g\|_{L^2}^{4}
\le
C_N \|\nabla g\|_{L^2}^{2}\,\|g\|_{L^1}^{2}.
\end{equation}
\end{lemma}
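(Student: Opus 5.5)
The statement is the Nash inequality in $\mathbb R^2$, so I would prove it by the classical Fourier splitting argument. By density it is enough to take $g\in L^1(\mathbb R^2)\cap H^1(\mathbb R^2)$ with $g\not\equiv 0$; the case $\nabla g=0$ forces $g\equiv0$ for $g\in L^2$. Normalize the Fourier transform as $\hat g(\xi)=\int e^{-ix\cdot\xi}g(x)\,dx$. Plancherel then gives
\[
\|g\|_{L^2}^2=(2\pi)^{-2}\|\hat g\|_{L^2}^2,\qquad \|\nabla g\|_{L^2}^2=(2\pi)^{-2}\int_{\mathbb R^2}|\xi|^2|\hat g(\xi)|^2\,d\xi,
\]
and we also have the pointwise bound $|\hat g(\xi)|\le \|g\|_{L^1}$.

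Fix a radius $R>0$ and split $\|\hat g\|_{L^2}^2$ into the regions $|\xi|\le R$ and $|\xi|>R$. On the ball, the pointwise bound gives at most $\pi R^2\|g\|_{L^1}^2$. Outside the ball, $|\xi|^2/R^2\ge1$, so the contribution is at most $R^{-2}(2\pi)^2\|\nabla g\|_{L^2}^2$. Hence
\[
\|g\|_{L^2}^2\le (2\pi)^{-2}\pi R^2\|g\|_{L^1}^2+R^{-2}\|\nabla g\|_{L^2}^2 .
\]

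Next optimize in $R$. Choosing $R^2=\|\nabla g\|_{L^2}\big/\big((4\pi)^{-1/2}\|g\|_{L^1}\big)$ balances the two terms and yields
\[
\|g\|_{L^2}^2\le C\,\|\nabla g\|_{L^2}\,\|g\|_{L^1},\qquad C=2(4\pi)^{-1/2}.
\]
Squaring gives \eqref{nash_formula} with $C_N=C^2=1/\pi$; any positive constant suffices for the statement.

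There is no real obstacle in this argument. The only point needing care is applying the pointwise bound on $\hat g$, which needs $g\in L^1$ so that $\hat g$ is a bounded continuous function. The scaling check $x\mapsto\lambda x$ confirms the exponents match in dimension $2$.
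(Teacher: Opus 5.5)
Your argument is correct and matches the standard proof: the paper gives no proof of its own and simply cites Nash's 1958 paper, where the inequality is proved by exactly this Fourier splitting into $|\xi|\le R$ and $|\xi|>R$ (an argument Nash attributes to Stein), and your Plancherel normalization, your choice of $R$, and the resulting constant $C_N=1/\pi$ are all correct. One cosmetic point: no density argument is needed, since the computation applies directly to every nonzero $g\in L^1(\mathbb R^2)\cap H^1(\mathbb R^2)$.
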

With these preliminary results, we proceed to discuss the local-in-time existence of a mild solution to (\ref{PKSNS-time-dependent}) in the sense of Definition \ref{def31}.
\section{Local-in-time Existence}\label{sect3}

In this section, we discuss the local existence and regularity of mild solutions to (\ref{PKSNS-time-dependent}).  We begin with the definition of local mild solutions to (\ref{PKSNS-time-dependent}) i.e. a local-in-time version of  Definition \ref{def31}:
\begin{definition}[Local and maximal local mild solutions]\label{def:local_solution}
We say that $(n, {\bf u}, \tau)$ is a \textbf{local mild solution} of system~(\ref{PKSNS-time-dependent}) if
\begin{itemize}
\item[(1)] $\tau$ is an {$\{\mathcal{F}_t\}_{t\geq 0}$}-stopping time and $(n, {\bf u})$ is an { $\{\mathcal{F}_t\}_{t\geq 0}$}-progressively measurable stochastic process with values in 
    $ L^{p}(\mathbb R^2) \times (L_{\sigma}^s(\mathbb R^2)\cap \dot{W}^{1,r}(\mathbb R^2))$ for some $p>1$, $s>2$  and  $r >2$;
    \item[(2)] there exists a non-decreasing sequence of stopping times $\{\tau_l,\, l \ge 1\}$ with $\tau_l \uparrow \tau$ a.s. as $l \to \infty$, such that 
    $\{(n(t \wedge \tau_l),\, \, {\bf u}(t \wedge \tau_l)),\, t \ge 0\}$ is a mild solution to system~(\ref{PKSNS-time-dependent}) in the sense of Definition \ref{def31}.
\end{itemize}
Moreover, if 
\[
\limsup_{t \uparrow \tau} \big( \|n\|_{X_n^t}  + \|{\bf u}\|_{X_{\bf u}^t} \big) = \infty 
\quad \text{on } \{\omega, \tau < \infty\} \text{ a.s.},
\]
then the local solution $(n, {\bf u}, \tau)$ is called a \textbf{maximal local solution}.
\end{definition}

Our proof for the main result begins with proving the existence of a local-in-time mild solution in the sense of Definition \ref{def:local_solution}.
For that purpose, we introduce the following functional spaces where we seek a solution. 
\begin{align*}
X_T
&:= \Big\{(n,{\bf u})  :
\; n\in C(0,T;L^{p}(\mathbb R^2)),
{\bf u}\in C(0,T;(L^{\frac{pq}{p-q}}(\mathbb R^2)\cap \dot{W}^{1,r}(\mathbb R^2))_{\sigma})
\Big\},
\end{align*}
endowed with the norm:
\begin{align*}
    \|(n,{\bf u})\|_{X_T}
&:= \sup_{t\in(0,T)}\Big( \|n(t)\|_{L^{p}(\mathbb R^2)}
       + \|{\bf u}(t)\|_{L^{\frac{pq}{p-q}}(\mathbb R^2)}+ \|\nabla {\bf u}(t)\|_{L^{r}(\mathbb R^2)}\Big).
\end{align*}
Here, for an appropriately chosen $\epsilon>0$, we consider $p,q,r$ of the form:
\begin{align}\label{pqr}
    p=2-\epsilon,\qquad q=\frac{2(2-\epsilon)}{2+\epsilon},\qquad r=2+\epsilon.
\end{align} 
For this choice, note that 
\begin{align}\label{quotient}
    \frac{pq}{p-q}= \frac4{\epsilon}-2,
\end{align}
can be made arbitrarily large depending on our choice of $\epsilon>0$.  In what follows, we fix a sufficiently small $\epsilon>0$.

Next, we notate the individual spaces for the cell density and the fluid velocity as ${X_n^T}=C(0,T;L^p(\mathbb{R}^2))$ and ${X_{\bf u}^T}= C(0,T;(L^{\frac{pq}{p-q}}(\mathbb{R}^2)\cap\dot{W}^{1,r}(\mathbb{R}^2))_{\sigma})$ respectively. These spaces are equipped with the norms, 
\begin{align}\label{Xtspace}
    \Vert n\Vert_{X^t_n}:=\sup_{s\in(0,t)}\Big( \|n(\cdot,s)\|_{L^{p}}\Big), \qquad 
\|{\bf u}\|_{X_{\bf u}^t}:=\sup_{s\in(0,t)}\Big(  \|{\bf u}(\cdot,s)\|_{L^{\frac{pq}{p-q}}}+\|\nabla {\bf u}(\cdot,s)\|_{L^{r}}\Big).
\end{align}
{{We note that $(L^{\frac{pq}{p-q}}(\mathbb{R}^2)\cap\dot{W}^{1,r}(\mathbb{R}^2))_{\sigma}$ is a Banach space equipped with the norm $ \|\cdot\|_{L^{\frac{pq}{p-q}}}+\|\nabla \cdot\|_{L^{r}}.$}}  Finally, for $p,q,r$ defined in \eqref{pqr}, we introduce the solution space 
\begin{align*}
&S_T = \{  \{\mathcal{F}_t\}_{t\in[0,T]}\text{-adapted, }L^p(\mathbb R^2) \times (L^{\frac{pq}{p-q}} (\mathbb R^2) \cap \dot{W}^{1,r}(\mathbb R^2))_{\sigma}-\text{valued processes } (n(t),{\bf u}(t))_{t\in [0,T]}
\}, 
\end{align*}
{\text{equipped with the norm}}
\begin{align}\label{norm-S_T}
&\qquad\|(n,{\bf u})\|_{S_T}:=\bigg[\mathbb E\sup_{t\in(0,T)}\Big( \|n(\cdot,t)\|^{\frac{pq}{p-q}}_{L^{p}}
       + \|{\bf u}(\cdot,t)\|^{\frac{pq}{p-q}}_{L^{\frac{pq}{p-q}}}+\|\nabla {\bf u}(\cdot,t)\|^{\frac{pq}{p-q}}_{L^{r}}\Big)\bigg]^{\frac{p-q}{pq}}.
\end{align}

Next, we state the main result of this section.
\begin{theorem}\label{thm3mar8}
  Assume that the initial data $(n_0,{\bf u}_0)$ satisfies \eqref{regularic} and that the noise coefficient satisfied the assumptions (H1)-(H2). Then, there exists an almost surely positive stopping time $\tau$ and {$(n,{\bf u})$} such that $(n,{\bf u},\tau)$ is a solution to the Cauchy
problem (\ref{PKSNS-time-dependent}) in the sense of Definition \ref{def:local_solution}, with $p = 2-\epsilon$, $s=\frac{4}{\epsilon}-2$  and  $r = 2+\epsilon$ (see \eqref{pqr})
for some appropriately small $\epsilon>0$, where $\tau$ is an
almost surely positive $\{\mathcal{F}_t\}_{t\geq0}-$stopping time.

     \end{theorem}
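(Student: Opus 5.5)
We will obtain the local solution by the standard truncation-plus-stopping-time method.

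\textbf{Truncated system.} Fix a smooth cut-off $\theta_R:[0,\infty)\to[0,1]$ with $\theta_R\equiv1$ on $[0,R]$ and $\theta_R\equiv0$ on $[2R,\infty)$. Define the truncated mild operator $\Phi=(\Phi_n,\Phi_{\bf u})$ on $S_T$ by inserting the factor $\theta_R(\|(n,{\bf u})\|_{X_t})$ in front of every nonlinear term of \eqref{3p1mar}--\eqref{3p2mar}. These are the terms $\nabla\cdot(n{\bf u})$, $\nabla\cdot(n\nabla c)$, $\mathcal P(({\bf u}\cdot\nabla){\bf u})$ and $\mathcal P(n\nabla c)$, with $\nabla c=\nabla\frac1{2\pi}\ln\frac1{|\cdot|}*n$. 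Here $\|(n,{\bf u})\|_{X_t}$ denotes the running supremum norm \eqref{Xtspace}, which is adapted. The goal is to show that $\Phi$ is a contraction on $S_T$ for $T=T(R)$ small. The resulting unique fixed point $(n_R,{\bf u}_R)$ is then a global solution of the truncated problem.

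\textbf{Deterministic terms.} These are estimated pathwise using Lemma \ref{lemma:semigroup-Lp-Lq} and Lemma \ref{lemma28}, with exponents $p=2-\epsilon$, $q=\frac{2(2-\epsilon)}{2+\epsilon}$, $r=2+\epsilon$. Lemma \ref{lemma28} with $\hat q=\frac{2p}{2-p}=\frac{pq}{p-q}$ (up to the arithmetic in \eqref{quotient}) gives $\|\nabla c\|_{L^{\hat q}}\lesssim\|n\|_{L^p}$. By H\"older,
\[
\|n\nabla c\|_{L^q}+\|n{\bf u}\|_{L^q}\lesssim\|n\|_{L^p}\big(\|n\|_{L^p}+\|{\bf u}\|_{L^{\frac{pq}{p-q}}}\big),
\]
since $\frac1q=\frac1p+\frac{p-q}{pq}$. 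Then \eqref{eq:3.4b} gives $\|\int_0^te^{(t-s)\Delta}\nabla\cdot(\cdots)\|_{L^p}\lesssim\int_0^t(t-s)^{-\frac12-(\frac1q-\frac1p)}ds\,\sup\|\cdot\|$. The exponent is $\frac12+\frac{2-\epsilon}{2(2-\epsilon)}\cdot\frac{\epsilon}{1}$-type, which stays below $1$ for $\epsilon$ small, so the integral is $\lesssim T^{\gamma}$ with $\gamma>0$.

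For ${\bf u}$ the target norms are $L^{\frac{pq}{p-q}}$ and $\dot W^{1,r}$:
\begin{itemize}
\item $\|\mathcal P(n\nabla c)\|_{L^q}$ is bounded as above. We use \eqref{eq:3.4a} for the $L^{pq/(p-q)}$ norm and \eqref{eq:3.4afractional} with $\beta_1=\frac12$ for the gradient in $L^r$. This yields time singularities $(t-s)^{-\frac12-(\frac1q-\frac1r)}$, which are integrable for small $\epsilon$.
\item For $({\bf u}\cdot\nabla){\bf u}$ we use $\|{\bf u}\cdot\nabla{\bf u}\|_{L^{m}}\le\|{\bf u}\|_{L^{pq/(p-q)}}\|\nabla{\bf u}\|_{L^r}$ with $\frac1m=\frac{p-q}{pq}+\frac1r$, and again apply \eqref{eq:3.4a} and \eqref{eq:3.4afractional}.
\end{itemize}
All resulting powers of $(t-s)$ are integrable once $\epsilon$ is small. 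The Lipschitz differences are handled identically, using bilinearity and the standard estimate $|\theta_R(a)x-\theta_R(b)y|\lesssim_R|a-b|+|x-y|$ on the truncated region. Because of the cut-off, every nonlinear term is bounded by $C_R T^{\gamma}\|(n_1-n_2,{\bf u}_1-{\bf u}_2)\|$ pathwise. Raising to the power $\frac{pq}{p-q}$ and taking expectations gives the $S_T$ estimate.

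\textbf{Stochastic convolution (main obstacle).} We must bound $\mathbb E\sup_{t\le T}\|\int_0^te^{-(t-s)A}\mathcal P{\bf f}({\bf u})dW_s\|^{\frac{pq}{p-q}}$ in $L^{\frac{pq}{p-q}}$ and in $\dot W^{1,r}$. The supremum in time is outside the expectation, and the convolution is not a martingale. We therefore use the factorization method of Da Prato--Kwapie\'n--Zabczyk together with the Burkholder inequality in the UMD/$\gamma$-radonifying setting, which is valid for $L^p$ spaces. Write the convolution as
\[
\frac{\sin\pi\alpha}{\pi}\int_0^t(t-s)^{\alpha-1}e^{-(t-s)A}Y_\alpha(s)\,ds,\qquad
Y_\alpha(s)=\int_0^s(s-\sigma)^{-\alpha}e^{-(s-\sigma)A}\mathcal P{\bf f}({\bf u})\,dW_\sigma .
\]
The square-function norm of $Y_\alpha$ is controlled via (H1) with $\beta=\frac12$ for the gradient part, which is why the $\gamma(U,D(A^\beta;p))$ norms appear. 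H\"older in time with exponent $\frac{pq}{p-q}$ large and $\alpha<\frac12$ chosen so that $(\alpha-1)\frac{pq}{pq-(p-q)}>-1$ yields
\[
\mathbb E\sup_{t\le T}\|\cdot\|^{\frac{pq}{p-q}}\le C T^{\delta}\,\mathbb E\int_0^T\big(1+\|{\bf u}\|^{\frac{pq}{p-q}}\big)\,ds .
\]
The differences are handled with (H2). The large integrability exponent $\frac{pq}{p-q}=\frac4\epsilon-2$ is exactly what makes this H\"older step work. This also yields a continuous modification, so the solution lies in $C(0,T;\cdot)$.

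\textbf{Removing the cut-off.} Having $(n_R,{\bf u}_R)$, we set
\[
\tau_R=\inf\{t:\|(n_R,{\bf u}_R)\|_{X_t}\ge R\}\wedge T .
\]
Since $(n_0,{\bf u}_0)$ lies in the relevant spaces by \eqref{regularic}, $\tau_R>0$ a.s. when $R$ exceeds twice the initial norm, by path continuity. On $[0,\tau_R]$ we have $\theta_R=1$, so $(n_R,{\bf u}_R,\tau_R)$ solves the original system. Uniqueness of the truncated fixed points gives consistency: $(n_R,{\bf u}_R)=(n_{R'},{\bf u}_{R'})$ on $[0,\tau_R\wedge\tau_{R'}]$. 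Letting $R\to\infty$, we define $\tau=\lim\tau_R$ and glue the solutions. This yields the local mild solution of Definition \ref{def:local_solution} with $\tau_l=\tau_l$ as the localizing sequence. Maximality, via blow-up of the norm when $\tau<\infty$, follows from the construction of $\tau_R$ as first exit times.
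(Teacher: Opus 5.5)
Your proof is correct, and its architecture is the paper's. You cut off the nonlinearities by the running-supremum norm and run a Banach fixed point in $S_T$ with moments of order $s=\frac{pq}{p-q}$. The contraction holds on a short interval whose length depends only on the cut-off level. You then remove the cut-off with a first exit time, which is a.s. positive by path continuity.

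\textbf{Where you differ: the stochastic convolution.} The paper treats $Z(t)=\int_0^te^{-(t-s)A}\mathcal P{\bf f}({\bf u})\,dW_s$ (and $Z_1$, driven by $A^\alpha{\bf f}$) as the solution of $dZ=-AZ\,dt+\mathcal P{\bf f}({\bf u})\,dW$. It applies It\^o's formula to $\|Z\|_{L^{2\bar q}}^{2\bar q}$ (resp.\ $\|Z_1\|_{L^{2\bar q}}^{2\bar p\bar q}$) and uses the $L^{2\bar q}$-dissipativity of $A$ on $\mathbb R^2$. BDG plus Young then absorb the supremum. Path continuity is imported separately from Brze\'zniak's results.

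You use the factorization method with the $\gamma$-radonifying Burkholder inequality in $L^s$ instead. This needs only boundedness and analyticity of $e^{-tA}$, together with a choice $\frac1s<\alpha<\frac12$, which is possible because $s=\frac4\epsilon-2>2$. In one step it gives:
\begin{itemize}
\item the $\mathbb E\sup_t$ bound;
\item an explicit factor $T^\delta$, convenient for the contraction;
\item the continuous modification.
\end{itemize}
The $\dot W^{1,r}$ part goes through identically by commuting $A^{1/2}$ through the semigroup and invoking (H1)--(H2) with $\beta=\tfrac12$.

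The paper's route avoids the $\gamma$-radonifying machinery. In exchange it needs an $L^p$-It\^o formula (which requires a regularization to justify) and dissipativity, neither of which factorization uses. Your single cut-off $\theta_R(\|(n,{\bf u})\|_{X_t})$ on all nonlinear terms also spares you the paper's case-by-case Lipschitz analysis.

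\textbf{Two small points about your last paragraphs.} Neither affects the statement at hand, for which one sufficiently large $R$ and $\tau=\tau_R$ already suffice.
\begin{itemize}
\item With $T=T(R)$, the fixed point is a priori a solution only on $[0,T(R)]$. Calling it a global solution of the truncated problem requires restarting at $T(R)$ with $\mathcal F_{T(R)}$-measurable data and iterating. This works because the contraction time depends on $R$ but not on the data, which is exactly what the paper does. Without this step, $\tau_R\le T(R)$ need not increase in $R$, so your gluing and maximality step fails as written.
\item Consistency of $(n_R,{\bf u}_R)$ and $(n_{R'},{\bf u}_{R'})$ up to $\tau_R\wedge\tau_{R'}$ does not follow from uniqueness of a fixed point, since these are fixed points of different maps. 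It needs a uniqueness argument for stopped solutions, as in Theorem \ref{thm:unique}.
\end{itemize}
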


\begin{proof}
Our proof relies on a cut-off argument that has appeared in earlier works such as \cite{zhai20202d, flandoli1995martingale}. By introducing a cut-off function to approximate the system \eqref{PKSNS-time-dependent}, we are able to control the nonlinearities in the system and derive estimates that enable the application of a fixed point theorem.

More precisely, we define $\theta\in C^2([0,\infty),[0,1])$ that satisfies the following conditions:
\begin{enumerate} 
  \item $\theta(r)=1$ for $r\in[0,1]$,
  \item $\theta(r)=0$ for $r>2$,
  \item $\sup_{r\in[0,\infty)}|\theta'(r)|\le C<\infty$.
\end{enumerate}
Next, for any $m \geq 1$, we set $\theta_m(\cdot)=\theta(\cdot/m)$. Using $\theta_m$, we then introduce the following set of equations that approximate the original system \eqref{PKSNS-time-dependent}:
\begin{equation}\label{3.1}
\begin{aligned}
dn + \theta_m\big(\|(n,{\bf u})\|_{X_t}\big)\,{\bf u}\cdot\nabla n\,dt
&=\Delta n\,dt
   - \theta_m\big(\|n\|_{X^t_n}\big) \nabla\cdot\big(n\nabla c\big)\,dt,\\
d{\bf u} + \theta_m\big(\|{\bf u}\|_{X_{\bf u}^t}\big)({\bf u}\cdot\nabla){\bf u}\,dt + \nabla P\,dt
&= \Delta {\bf u}\,dt
   - \theta_m\big(\|n\|_{X_n^t}\big)n\nabla c\,dt + {\bf f}({\bf u})\,dW_t,\\
\nabla\cdot {\bf u} &= 0,\qquad t>0,\; x\in\mathbb R^2.
\end{aligned}
\end{equation}

\smallskip

\smallskip
 Our proof of Theorem \ref{thm3mar8} is then split into the following two steps:\\
{\bf Step 1}: For a fixed $m\geq1$, establish the existence of a {\it global-in-time mild solution} to the approximation system \eqref{3.1}. \\
{\bf Step 2}: Resolve the cut-off approximation by restricting the approximate solution to a local-in-time interval determined by a stopping time. This gives us a local-in-time solution to the original system \eqref{PKSNS-time-dependent}.\\

We proceed with {\bf Step 1}. This will be done by the aid of the Banach fixed point theorem applied in the space ${S}_T$ defined in \eqref{norm-S_T}. 
For that purpose, we define the mapping $\Phi=(\Phi_1,\Phi_2)$ on $S_T$ as follows:
\begin{align}
\Phi_1(n,{\bf u})(t)
&:= e^{t\Delta}n_0
   - \int_0^t e^{(t-s)\Delta}\Big[
       \theta_m\big(\|(n,{\bf u})\|_{X_s}\big){\bf u}\cdot\nabla n\notag\\
&\qquad\qquad\qquad\qquad
     +\;\theta_m\big(\|n\|_{X^s_n}\big)\nabla\cdot(n\nabla c)
     \Big](s)\,ds, \label{eq220}
\\[6pt]
\Phi_2(n,{\bf u})(t)
&:= e^{-tA}{\bf u}_0
   - \int_0^t e^{-(t-s)A}\theta_m\big(\|{\bf u}\|_{X_{\bf u}^s}\big)\,\mathcal P\big(({\bf u}\cdot\nabla){\bf u}\big)(s)\,ds\notag\\
&\qquad +\int_0^t e^{-(t-s)A}\theta_m\big(\|n\|_{X_n^s}\big)\,\mathcal P\big(n\nabla c\big)(s)\,ds
   + \int_0^t e^{-(t-s)A}\mathcal P{\bf f}({\bf u}(s))\,dW_s. \label{eq221}
\end{align}
In what follows, we derive suitable estimates for the terms $\Phi_1$ and $\Phi_2$ and show that they define contractions on the space $S_T$ for some $T>0$.

First, we claim that the following estimates hold true for any $m$ and  $p$, $q$ in  \eqref{pqr}, there exists a constant $C_T>0$ such that
\begin{equation}
\begin{split}\label{bounds_phi}
    &\mathbb{E}\big( \sup_{t\leq T}\|\Phi_1(n,{\bf u})(t)\|_{L^{p}}\big) \lesssim \Vert n_0\Vert_{L^p}+m^2
   \, C_T\\
&\mathbb{E}\big( \sup_{t\leq T}\|\Phi_2(n,{\bf u})\|_{L^\frac{pq}{p-q}}^{\frac{pq}{p-q}} \big)
\lesssim \|{\bf u}_0\|_{L^\frac{pq}{p-q}}^\frac{pq}{p-q} + C_T\Big[m^{\frac{2pq}{p-q}}
+  \mathbb{E}\left(\sup_{t\leq T}\|{\bf u}(t )\|_{L^\frac{pq}{p-q}}^{\frac{pq}{p-q}}\right) +  1\Big].
\end{split}
\end{equation}

We begin with estimates for $\Phi_1$. By using (\ref{eq:3.3})--(\ref{eq:3.4a1}), we obtain for any $t\in(0,T)$ that there exists some positive constant $C_T$ depending only on $T$, such that
\begin{align}\label{phi1estimatemar}
\|\Phi_1(n,{\bf u})(t)\|_{L^{p}}
&\lesssim  \Vert e^{t\Delta }n_0\Vert_{L^{p}}+\int_0^t (t-s)^{-\frac{1}{2}+\frac{1}{p}-\frac{1}{q}}\Big\|\,\theta_m(\|(n,{\bf u})\|_{X_s}) 
     (n\nabla c+{\bf u}n)\Big\|_{L^q}\,ds\nonumber\\
&\lesssim  \Vert n_0\Vert_{L^p}+\int_0^t (t-s)^{-\frac{1}{2}+\frac{1}{p}-\frac{1}{q}} \,\theta_m(\|(n,{\bf u})\|_{X_s})
   (\|n(s)\|_{L^{p}} \|\nabla c(s) + {\bf u}(s)\|_{L^{\frac{pq}{p-q}}})\,ds\nonumber \\[4pt]
&\lesssim  \Vert n_0\Vert_{L^{p}}+
   \int_0^t (t-s)^{-\frac{1}{2}+\frac{1}{p}-\frac{1}{q}}\,
   \,\theta_m(\|(n,{\bf u})\|_{X_s})\| (n,{\bf u}) \|^2_{X_T}\, \,ds \nonumber \\[4pt]
&=\Vert n_0\Vert_{L^p}+\,\theta_m(\|(n,{\bf u})\|_{X_T})
\|(n,{\bf u}) \|^2_{X_T}
   \int_0^t (t-s)^{-\frac{1}{2}+\frac{1}{p}-\frac{1}{q}}\,ds\nonumber \\[4pt]
&\lesssim \Vert n_0\Vert_{L^p}+m^2
   \, C_T,
\end{align}
where $\epsilon\in(0,1)$ is chosen so that $-\frac{1}{2}+\frac{1}{p}-\frac{1}{q}>-1.$

Next, we will find bounds for the the term $\Phi_2$. However, this term is more challenging to treat due to the presence of the stochastic integral in its definition \eqref{eq221}. Therefore, we must first find appropriate bounds for the stochastic integral. 
For that purpose, we first consider the following It\^{o} equation
\begin{align}\label{itoequation}
    d{\bf Z}(t) = -A {\bf Z}(t)\, dt +  \mathcal P{\bf f}({\bf u}(t))\, dW_t, 
\quad {\bf Z}(0) = {\bf 0},
\end{align}
{ where we recall that $A$ is the Stokes operator.}

By applying the It\^{o} product rule to \eqref{itoequation}, we obtain for any $\bar q\geq 1$, that 
{
\begin{align*}
d|{\bf Z}|^{\bar q}(t)=&(-\bar q|{\bf Z}|^{\bar q-2} {\bf Z}\cdot A {\bf Z} +\frac{\bar q(\bar q-2)|{\bf Z}|^{\bar q-4}}{2}| {\bf Z}\cdot \mathcal P{\bf f}({\bf u}(t))|^2+\frac{\bar q}{2}|{\bf Z}|^{\bar q-2}[\mathcal P{\bf f}({\bf u}(t))]^2)dt
+\bar q|{\bf Z}|^{\bar q-2} {\bf Z}\cdot \mathcal P{\bf f}({\bf u}(t))dW_t.
\end{align*}
}
{While we can integrate the equation above to obtain an It\^{o} equation for $L^{\bar q}$-norm of ${\bf Z}$, we should to apply the It\^{o} formula with $\|.\|_{L^2}^{\bar q}$ for the sake of numerology:}
{\begin{align*}
d\Vert {\bf Z}\Vert^{2\bar q}_{L^{2\bar q}}
=&-2\bar q(2\bar q-1)  \Vert |{\bf Z}|^{\bar q-1}\nabla {\bf Z}\Vert_{L^2}^2 \, dt
+ 2\bar q \langle |{\bf Z}|^{2\bar q-2}{\bf Z}, \mathcal P{\bf f}({\bf u}(t)) \, dW_t \rangle\\
&+ \bar q(2\bar q-2) \int_{\mathbb R^2} |{\bf Z}|^{2\bar q-4} \Big( \sum_{k\geq 1} |  {\bf Z}\cdot (\mathcal P{\bf f}({\bf u}(t))e_k)|^2 \Big) \, dx \, dt+ \bar q\int_{\mathbb R^2} |{\bf Z}|^{2\bar q-2} \Big( \sum_{k\geq 1} | \mathcal P{\bf f}({\bf u}(t))e_k|^2 \Big)dxdt.
\end{align*}}
Invoking the H\"older inequality, we obtain, for the last term on the right hand side of the equation above, that
\begin{align}\label{followfrommar5}
\int_{\mathbb R^2} |{\bf Z}|^{2\bar q-2} \Big( \sum_{k\geq 1} | \mathcal P{\bf f}({\bf u}(t))e_k|^2 \Big) dx \le \||{\bf Z}|^{2\bar q-2}\|_{L^{\frac{2\bar q}{2\bar q-2}}} \, \|\Big( \sum_{k\geq 1} | \mathcal P{\bf f}({\bf u}(t))e_k|^2 \Big)\|_{L^{\bar q}}.
\end{align}
 {Similarly, we find
\begin{align}\label{3p8aapr}
\int_{\mathbb R^2} |{\bf Z}|^{2\bar q-4} \Big( \sum_{k\geq 1} | {\bf Z}\cdot (\mathcal P{\bf f}({\bf u}(t))e_k)|^2 \Big) \, dx \, dt\leq   \||{\bf Z}|^{2\bar q-2}\|_{L^{\frac{2\bar q}{2\bar q-2}}} \, \|\Big( \sum_{k\geq 1} | \mathcal P{\bf f}({\bf u}(t))e_k|^2 \Big)\|_{L^{\bar q}}.
\end{align}}
 Due to the growth assumption (H1) on the noise coefficient, we know that $\|\big(\sum_{k\geq 1}|\mathcal P{\bf f}({\bf u}(t))e_k|^2\big)\|_{L^{\bar q}} \lesssim 1 + \|{\bf u}\|^2_{L^{2\bar q}}$.  Then it follows from (\ref{followfrommar5}) that, for some constant $C_{\varepsilon}>0$ depending only on a parameter $\varepsilon>0$ which will be chosen appropriately, we have
\begin{align}\label{traceterm1feb}
 \int_0^t\int_{\mathbb R^2} |{\bf Z}|^{2\bar q-2} \Big( \sum_{k\geq 1} | \mathcal P{\bf f}({\bf u}(s))e_k|^2 \Big) dxds\lesssim &\int_0^t\|{\bf Z}\|_{L^{2\bar q}}^{2\bar q-2} \, \|\Big( \sum_k | \mathcal P{\bf f}({\bf u}(s))e_k|^2 \Big)\|_{L^{\bar q}}ds\nonumber\\
\le &\sup_{s\leq t} \|{\bf Z}\|_{L^{2\bar q}}^{2\bar q-2} \int_0^t\, \|\Big( \sum_{k\geq 1} | \mathcal P{\bf f}({\bf u}(s))e_k|^2 \Big)\|_{L^{\bar q}}ds\nonumber\\
\le &\sup_{s\leq t} \|{\bf Z}\|_{L^{2\bar q}}^{2\bar q-2} \int_0^t\, [1+\Vert {\bf u}\Vert_{L^{2\bar q}}^2(s)]ds\nonumber\\
\le &\varepsilon  \sup_{s\leq t} \|{\bf Z}\|_{L^{2\bar q}}^{2\bar q} + C_\varepsilon \big(\int_0^t   (1 + \|{\bf u}\|_{L^{2\bar q}}^2)ds\big)^{\bar q},
\end{align} 
where in the last step, we used Young's inequality.   {Similarly, we obtain from (\ref{3p8aapr}) that
\begin{align}\label{aprnewcorrection}
\int_0^t\int_{\mathbb R^2} |{\bf Z}|^{2\bar q-4} \Big( \sum_{k\geq 1} | {\bf Z}\cdot (\mathcal P{\bf f}({\bf u}(s))e_k)|^2 \Big) \, dx \, ds\lesssim \varepsilon  \sup_{s\leq t} \|{\bf Z}\|_{L^{2\bar q}}^{2\bar q} + C_\varepsilon \big(\int_0^t   (1 + \|{\bf u}\|_{L^{2\bar q}}^2)ds\big)^{\bar q}.
\end{align}}
Next, for the quadratic variation term, we apply the  Burkholder-Davis-Gundy (BDG) inequality, which yields
\[
\mathbb{E}\Big[\sup_{t\le T} \Big| \int_0^t   \langle |{\bf Z}|^{2\bar q-2}Z,  \mathcal P{\bf f}({\bf u}(s))\, dW_s \rangle \Big| \Big] 
\lesssim \mathbb{E} \Big[ \Big( \int_0^T   \sum_{k\geq 1} |\langle |{\bf Z}|^{2\bar q-2}{\bf Z}, \mathcal P{\bf f}({\bf u}(s)) e_k \rangle|^2 ds \Big)^{1/2} \Big].
\]  
We utilize  H\"older's inequality and Young's inequality to obtain,
\begin{align}\label{youngbefore1feb}
&\mathbb{E}\Big[\sup_{t\le T} \Big| \int_0^t  \langle |{\bf Z}|^{2\bar q-2}{\bf Z},  \mathcal P{\bf f}({\bf u}(s))\, dW_s \rangle \Big|\Big]\nonumber\\
\le &\varepsilon \, \mathbb{E} \sup_{s\le T}  \|{\bf Z}(s)\|_{L^{2\bar q}}^{2\bar q} + C_\varepsilon \, \mathbb{E} \bigg(\int_0^T  (1 + \|{\bf u}(s)\|_{L^{2\bar q}}^{\bar q}) ds\bigg)^2,
\end{align}
where, again $\varepsilon>0$ is appropriately small and $C_{\varepsilon}$ is a positive constant depending on $\varepsilon.$  Hence, upon combining (\ref{traceterm1feb}) and (\ref{aprnewcorrection}) with (\ref{youngbefore1feb}), we arrive at
\begin{align}\label{110newlabel}
\mathbb{E}\left( \sup_{t\leq T} \|{\bf Z}(t)\|^{2\bar q}_{L^{2\bar q}} \right)&+2\bar q(\bar q-1)  \mathbb E\int_0^T \Vert |{\bf Z}|^{\bar q-1} \nabla {\bf Z} \Vert^2_{L^2}ds\nonumber\\
\lesssim&  \bigg[\, \mathbb{E} \bigg(\int_0^t  (1 + \|{\bf u}(s)\|_{L^{2\bar q}}^{\bar q}) ds\bigg)^2+\mathbb E\bigg(\int_0^t  (1 + \|{\bf u}(s)\|_{L^{2\bar q}}^2) ds\bigg)^{\bar q}\bigg].
\end{align}
Finally, by using H\"{o}lder's inequality, this gives us, for some constant depending only on $T$, that
\begin{align}\label{110newlabelnewmar}
\mathbb{E}\left( \sup_{t\leq T} \|{\bf Z}(t)\|^{2\bar q}_{L^{2\bar q}} \right)&+2\bar q(\bar q-1)  \mathbb E\int_0^T \Vert |{\bf Z}|^{\bar q-1} \nabla {\bf Z} \Vert^2_{L^2}ds\lesssim C_T[1+\mathbb E(\int_0^T\sup_{s\leq t}\Vert {\bf u}(s)\Vert_{2\bar q}^{2\bar q}ds)].
\end{align}

With this estimate \eqref{110newlabelnewmar} at hand, we now turn to the term $\Phi_2$. We obtain the following bounds for any $t\in(0,T),$ using the semigroup estimates stated in Lemma \ref{lemma:semigroup-Lp-Lq}:  
\begin{align}\label{labelphi2mar}
\|\Phi_2&(n,{\bf u})(t)\|_{L^{\frac{pq}{p-q}}}\nonumber\\
\lesssim& \Vert e^{-tA}{\bf u}_0\Vert_{L^{\frac{pq}{p-q}}}+ \int_0^t (t-s)^{-\frac{1}{2}+\frac{1}{p}-\frac{1}{q}} \|  \mathcal{P} \nabla \cdot (\theta_m\big(\|{\bf n}\|_{X_{n}^s}\big) \nabla c(s) \otimes \nabla c(s) +\theta_m\big(\|{\bf u}\|_{X_{\bf u}^s}\big) {\bf u}(s) \otimes {\bf u}(s) ) \|_{L^{\frac{pq}{2(p-q)}}} \, ds \nonumber\\
&+\Vert\int_0^t e^{-(t-s)A}\mathcal  P{\bf f}({\bf u}(s))\,dW_s\Vert_{L^\frac{pq}{p-q}} \nonumber \\
\lesssim& \Vert {\bf u}_0\Vert_{L^\frac{pq}{p-q}}+ \int_0^t (t-s)^{-\frac{1}{2}+\frac{1}{p}-\frac{1}{q}} \big( \theta_m\big(\|{n}\|_{X_{n}^s}\big)\| \nabla c(s) \|_{L^{\frac{pq}{p-q}}} \| \nabla c(s) \|_{L^\frac{pq}{p-q}} +\theta_m\big(\|{\bf u}\|_{X_{\bf u}^s} \big)\| {\bf u}(s) \|_{L^{\frac{pq}{p-q}}} \|{\bf u}(s) \|_{L^{\frac{pq}{p-q}}} \big) \, ds\nonumber\\
&+\Vert\int_0^t e^{-(t-s)A}\mathcal  P{\bf f}({\bf u}(s))\,dW_s\Vert_{L^\frac{pq}{p-q}}\nonumber\\
\lesssim & \Vert {\bf u}_0\Vert_{L^\frac{pq}{p-q}}+ \int_0^t (t-s)^{-\frac{1}{2}+\frac{1}{p}-\frac{1}{q}} \big( \theta_m\big(\|{n}\|_{X_{n}^s}\big)\| n(s) \|^2_{\frac{pq}{p-q}} + \theta_m\big(\|{\bf u}\|_{X_{\bf u}^s}\big) \| {\bf u}(s) \|^2_\frac{pq}{p-q} \big) \, ds\nonumber\\
&+\Vert\int_0^t e^{-(t-s)A}\mathcal   P{\bf f}({\bf u}(s))\,dW_s\Vert_{L^\frac{pq}{p-q}} \nonumber\\
\lesssim &\Vert {\bf u}_0\Vert_{L^\frac{pq}{p-q}}+ \int_0^t (t-s)^{-\frac{1}{2}+\frac{1}{p}-\frac{1}{q}}  \, ds \,  \| (n,{\bf u}) \|^2_{X_T}\theta_m\big(\|(n,{\bf u})\|_{X^s}\big)+\Vert\int_0^t e^{-(t-s)A}\mathcal  P{\bf f}({\bf u}(s))\,dW_s\Vert_{L^\frac{pq}{p-q}} \nonumber\\
\lesssim &\Vert {\bf u}_0\Vert_{L^\frac{pq}{p-q}}+C_Tm^2+\Vert\int_0^t e^{-(t-s)A}\mathcal  P{\bf f}({\bf u}(s))\,dW_s\Vert_{L^\frac{pq}{p-q}}\nonumber\\
= &\Vert {\bf u}_0\Vert_{L^\frac{pq}{p-q}}+C_Tm^2+\Vert {\bf Z}\Vert_{L^\frac{pq}{p-q}},
\end{align}
where $\epsilon\in(0,1)$ is chosen so that $-\frac{1}{2}+\frac{1}{p}-\frac{1}{q}>-1.$  Next, we choose $\bar q=\frac{pq}{2(p-q)}$ in (\ref{110newlabelnewmar}) and combine it with (\ref{labelphi2mar}) to obtain
\[
\mathbb{E}\big( \sup_{t\leq T}\|\Phi_2(n,{\bf u})\|_{L^\frac{pq}{p-q}}^{\frac{pq}{p-q}} \big)
\lesssim \|{\bf u}_0\|_{L^\frac{pq}{p-q}}^\frac{pq}{p-q} + C_Tm^{\frac{2pq}{p-q}}
+  C_T[1+\mathbb{E}(\sup_{t\leq T}\|{\bf u}(t )\|_{L^\frac{pq}{p-q}}^{\frac{pq}{p-q}})].
\]
{  We have thus derived the desired $L^p$-estimates \eqref{bounds_phi} for $\Phi_1,\Phi_2$ defined in \eqref{eq220} and \eqref{eq221}. 

Owing to the fact that the space $S_T$ defined in \eqref{norm-S_T}, involves $L^r$-norm of the gradient of the fluid velocity, we will next prove for $r=2+\epsilon$ that,} 
\begin{equation}
    \begin{split}\label{bounds_nablaphi}
\mathbb{E}\left(\sup_{t\leq T}\Vert \nabla \Phi_2(n,{\bf u})\Vert^{\frac{pq}{p-q}}_{L^r}\right)\lesssim   \Vert \nabla { \bf u}_0\Vert^{{\frac{pq}{p-q}}}_{L^r}+C_Tm^{{\frac{2pq}{p-q}}}+C_T[1+\mathbb E(\sup_{t\leq T}\Vert \nabla {\bf u}(t)\Vert_{r}^{{\frac{pq}{p-q}}})].
    \end{split}
\end{equation}

 To that end, we apply the semigroup estimates stated in Lemma \ref{lemma:semigroup-Lp-Lq} and obtain
\begin{align}\label{labelphi2apr}
&\|\nabla \Phi_2(n,{\bf u})\|_{L^{r}}\lesssim\Vert e^{-tA}\nabla {\bf u}_0\Vert_{L^{r}}\nonumber\\
+& \int_0^t \| e^{-(t-s)A} \mathcal{P} \nabla   (n(s)  \nabla c(s) + (u\cdot \nabla) u(s) ) \|_{L^r} \, ds+\Vert\int_0^t e^{-(t-s)A}\mathcal  P\nabla  {\bf f}(u(s))\,dW_s\Vert_{L^p} \nonumber \\
&\lesssim \Vert \nabla {\bf u}_0\Vert_{L^r}+ \int_0^t (t-s)^{-{\frac{1}{2}
}+\frac{1}{r}-\frac{1}{q}} \big( \| n (s) \|_{L^p} \| \nabla c(s) \|_{L^{\frac{pq}{p-q}}})ds\nonumber\\
&+\int_0^t(t-s)^{-{\frac{1}{2}
}+\frac{1}{r}-\frac{pq+rp-rq}{rpq}} \|\nabla {\bf u}(s) \|_{L^r} \| {\bf u}(s) \|_{L^{\frac{pq}{p-q}}} \big) \, ds+\Vert\int_0^t e^{-(t-s)A}\mathcal  P\nabla {\bf f}({\bf u}(s))\,dW_s\Vert_{L^r}\nonumber\\
&\lesssim  \Vert \nabla {\bf u}_0\Vert_{L^r}+ \int_0^t (t-s)^{-{\frac{1}{2}}+\frac{1}{r}-\frac{1}{q}} \big( \| n(s) \|^2_{p})ds\nonumber\\
&+\int_0^t (t-s)^{-{\frac{1}{2}}+\frac{1}{r}-\frac{pq+rp-rq}{rpq}}  \| {\bf u}(s)\|_{\frac{pq}{p-q}} \| \nabla {\bf u}(s) \|_r \, ds+\Vert\int_0^t e^{-(t-s)A}\mathcal  P\nabla {\bf f}({\bf u}(s))\,dW_s\Vert_{L^r} \nonumber\\
&\lesssim \Vert\nabla {\bf u}_0\Vert_{L^r}+ \bigg(\int_0^t (t-s)^{-{\frac{1}{2}}+\frac{1}{r}-\frac{1}{q}}  \, ds +\int_0^t (t-s)^{-{\frac{1}{2}}+\frac{1}{r}-\frac{pq+rp-rq}{rpq}}  \, ds\bigg)\,  \| (n,{\bf u}) \|^2_{X_T}\nonumber\\
&+\Vert\int_0^t e^{-(t-s)A}\mathcal  P\nabla {\bf f}({\bf u}(s))\,dW_s\Vert_{L^r} \nonumber\\
&\lesssim \Vert \nabla  {\bf u}_0\Vert_{L^r}+C(T)m^2+\Vert\int_0^t e^{-(t-s)A}\nabla \mathcal  P{\bf f}({\bf u}(s))\,dW_s\Vert_{L^r},
\end{align}
where $\epsilon\in(0,\frac{\sqrt{17}-3}{2})$ is chosen so that $-\frac{1}{2}+\frac{1}{r}-\frac{1}{q}>-1$ and $-\frac{1}{2}+\frac{1}{r}-\frac{pq+rp-rq}{rpq}=-\frac{1}{2}+\frac{1}{p}-\frac{1}{q}>-1$.
As earlier (see estimate \eqref{110newlabelnewmar}), estimating the stochastic integral $\Vert\int_0^t e^{-(t-s)A}A^{\frac{1}{2}}\mathcal  P{\bf f}({\bf u}(s))\,dW_s\Vert_{L^r}$ poses several challenges. Therefore, we consider the following It\^o equation for any $\alpha\in(0,1],$
$$d{\bf Z}_1=-A {\bf Z}_1dt+A^{\alpha} \mathcal P{\bf f}({\bf u}(t))dW_t,\qquad ~{\bf Z}_1(0)={\bf 0}.$$
Applying the It\^{o} formula to the equation above with $\|.\|_{L^2}^{2\bar p}$ for any $\bar p\geq 1$, we obtain  
\begin{align}\label{ii1234}
d\Vert {\bf Z}_1^{\bar q}\Vert_{L^2}^{2\bar p} &=\bar p \Vert |{\bf Z}_1|^{\bar q}\Vert_{L^2}^{2(\bar p-1)}\bigg[-2\bar q(2\bar q-1)\Vert |{\bf Z}_1|^{\bar q-1}\nabla {\bf Z}_1\Vert_{L^2}^2dt+2\bar q\langle |{\bf Z}_1|^{2\bar q-2}{\bf Z}_1,\mathcal PA^{\alpha}{\bf f}({\bf u}(t))dW_t\rangle\nonumber\\
&\quad+\bar q(2\bar q-2)\int_{\mathbb R^2} |{\bf Z}_1|^{2\bar q-4}(\sum_{k\geq 1 }|{\bf Z}_1\cdot(\mathcal P A^{\alpha}{\bf f} e_k)|^2)dxdt+\bar q\int_{\mathbb R^2} |{\bf Z}_1|^{2\bar q-2} \Big( \sum_{k\geq 1} | \mathcal PA^{\alpha}{\bf f}({\bf u}(t))e_k|^2 \Big)dxdt\bigg]\nonumber\\
&\quad+\frac{4\bar q^2}{2}\bar p(\bar p-1)\Vert |{\bf Z}_1|^{\bar q}\Vert_{L^2}^{2(\bar p-2)}\langle |{\bf Z}_1|^{2\bar q-2}{\bf Z}_1,\mathcal PA^{\alpha }{\bf f}\rangle^2dt\nonumber\\
&=-2\bar p\bar q(2\bar q-1)\Vert |{\bf Z}_1|^{\bar q}\Vert_{L^2}^{2(\bar p-1)}\Vert |{\bf Z}_1|^{\bar q-1}\nabla {\bf Z}_1\Vert_{L^2}^2dt+2\bar p\bar q\Vert |{\bf Z}_1|^{\bar q}\Vert_{L^2}^{2(\bar p-1)} \langle |{\bf Z}_1|^{2\bar q-2}{\bf Z}_1,\mathcal P A^{\alpha} {\bf f}dW_t\rangle\nonumber\\
&\quad+2\bar p\bar q(\bar q-1)\Vert |{\bf Z}_1|^{\bar q}\Vert_{L^2}^{2(\bar p-1)}\int_{\mathbb R^2}|{\bf Z}_1|^{2\bar q-4}(\sum_{k\geq 1}|{\bf Z}_1\cdot\mathcal P A^{\alpha}{\bf f}e_k|^2)dx dt\nonumber\\
&\quad+\bar p\bar q\Vert |{\bf Z}_1|^{\bar q}\Vert_{L^2}^{2(\bar p-1)}\int_{\mathbb R^2} |{\bf Z}_1|^{2\bar q-2} \Big( \sum_{k\geq 1} | \mathcal PA^{\alpha}{\bf f}({\bf u}(t))e_k|^2 \Big)dxdt\nonumber\\
&\quad +\frac{4\bar p{\bar q}^2(\bar p-1)}{2}\Vert |{\bf Z}_1|^{\bar q}\Vert_{L^2}^{2(\bar p-2)}\langle |{\bf Z}_1|^{2\bar q-2}{\bf Z}_1,\mathcal P A^{\alpha} {\bf f}\rangle^2dt\nonumber\\
&:=II_1+II_2+II_3+II_4+II_5.
\end{align}
We will now move $II_1$ to the left hand side and find estimates for each term $II_{i};\, i=2,..,5$ on the right hand side. 
We first estimate $II_2$ by applying the Burkholder-Davis-Gundy (BDG) inequality. This gives us,
\begin{align}\label{estimateii2_before}
&\mathbb{E}\Big[\sup_{s\le T} \Big| \int_0^t  \Vert |{\bf Z}_1|^{\bar q}\Vert_{L^2}^{2(\bar p-1)}  \langle |{\bf Z}_1|^{2\bar q-2}{\bf Z}_1,  \mathcal PA^{\alpha}{\bf f}({\bf u}(s))\, dW_s \rangle \Big| \Big] \nonumber \\
\lesssim &\mathbb{E} \Big[ \Big( \int_0^T \Vert |{\bf Z}_1|^{\bar q}\Vert_{L^2}^{4(\bar p-1)}   \sum_{k\geq 1} |\langle |{\bf Z}_1|^{2\bar q-2}{\bf Z}_1, \mathcal PA^{\alpha}{\bf f}({\bf u}(s)) e_k \rangle|^2 ds \Big)^{1/2} \Big]\nonumber\\
\lesssim &\mathbb{E} \Big[\sup_{t\leq T} \Vert |{\bf Z}_1|^{\bar q}\Vert_{L^2}^{2(\bar p-1)}  \Big( \int_0^T     \sum_{k\geq 1} |\langle |{\bf Z}_1|^{2\bar q-2}{\bf Z}_1, \mathcal PA^{\alpha}{\bf f}({\bf u}(s)) e_k \rangle|^2 ds \Big)^{1/2} \Big].
\end{align}

We further apply H\"older's inequality to get
\begin{align}\label{3p21}
\Big( \int_0^T     \sum_{k\geq 1} |\langle |{\bf Z}_1|^{2\bar q-2}{\bf Z}_1, \mathcal PA^{\alpha}{\bf f}({\bf u}(s)) e_k \rangle|^2 ds \Big)^{1/2} \lesssim\bigg(\int_0^T \Vert {\bf Z}_1\Vert^{2(2\bar q-1)}_{L^{2\bar q}}(1+\Vert A^{\alpha} {\bf u}(s)\Vert^2_{L^{2\bar q}})ds\bigg)^{1/2},
\end{align}
where we additionally invoked the growth assumption (H1).    Moreover, with (\ref{estimateii2_before}) and (\ref{3p21}), we utilize Young's inequality to conclude

\begin{align}\label{estimateii2}
&\mathbb{E}\Big[\sup_{s\le T} \Big| \int_0^t  \Vert |{\bf Z}_1|^{\bar q}\Vert_{L^2}^{2(\bar p-1)}  \langle |{\bf Z}_1|^{2\bar q-2}{\bf Z}_1,  \mathcal PA^{\alpha}{\bf f}({\bf u}(s))\, dW_s \rangle \Big| \Big]\nonumber\\ \lesssim &
\mathbb{E}\Big[\sup_{t\leq T} \Vert |{\bf Z}_1|^{\bar q}\Vert_{L^2}^{2(\bar p-1)} \bigg(\int_0^T \Vert {\bf Z}_1\Vert^{2(2\bar q-1)}_{L^{2\bar q}}(1+\Vert A^{\alpha} {\bf u}(s)\Vert^2_{L^{2\bar q}})ds\bigg)^{1/2} \Big] \nonumber\\
\lesssim&\mathbb{E}\Big[\sup_{t\leq T} \Vert {\bf Z}_1\Vert_{L^{2\bar q}}^{2\bar p\bar q-1} \bigg(\int_0^T  (1+\Vert A^{\alpha} {\bf u}(s)\Vert^2_{L^{2\bar q}})ds\bigg)^{1/2} \Big] \nonumber\\
\le& \varepsilon \, \mathbb{E} \sup_{s\le T}  \|{\bf Z}_1(s)\|_{L^{2\bar q}}^{2\bar p\bar q} + C_\varepsilon \, \mathbb{E} \bigg(\int_0^T  (1 + \|A^{\alpha}{\bf u}(s)\|_{L^{2\bar q}}^{2}) ds\bigg)^{\bar p\bar q},
\end{align}

Next, for $II_3$, we obtain
we have, due to H\"{o}lder's inequality and assumption (H1), that
\begin{align}\label{estimateii3}
\sup_{s\leq t}\Vert |{\bf Z}_1|^{\bar q}\Vert_{L^2}^{2(\bar p-1)}&\int_0^t\int_{\mathbb R^2}|{\bf Z}_1|^{2\bar q-4}(\sum_{k}|{\bf Z}_1\cdot\mathcal P A^{\alpha}{\bf f}e_k|^2)dx ds\nonumber\\
\le &\sup_{s\leq t}\Vert |{\bf Z}_1|^{\bar q}\Vert_{L^2}^{2(\bar p-1)}\int_0^t \Vert {\bf Z}_1\Vert_{L^{2\bar q}}^{2\bar q-2}\Vert \sum_{k}|\mathcal PA^{\alpha}{\bf f}e_k|^2\Vert_{L^{2\bar q}}ds\nonumber\\
\leq&\sup_{s\leq t}\Vert {\bf Z}_1\Vert_{L^{2\bar q}}^{2\bar p\bar q-2}\int_0^t(1+\Vert A^{\alpha}{\bf u}\Vert^2_{L^{2\bar q}})ds\nonumber\\
\leq&\varepsilon (\sup_{s\leq t} \Vert {\bf Z}_1\Vert_{L^{2\bar q}}^{2\bar p\bar q})+C_{\varepsilon}\big(\int_0^t1+\Vert A^{\alpha}{\bf u}\Vert^2_{L^{2\bar q}} ds\big)^{\bar p\bar q}. 
\end{align}
Similarly, for $II_4$ we obtain
\begin{align}\label{estimateii4}
\sup_{s\leq t}\Vert |{\bf Z}_1|^{\bar q}\Vert_{L^2}^{2(\bar p-1)}\int_0^t\int_{\mathbb R^2} |{\bf Z}_1|^{2\bar q-2} \Big( \sum_{k\geq 1} | \mathcal P{\bf f}({\bf u}(s))e_k|^2 \Big)dx ds
\leq&\varepsilon (\sup_{s\leq t} \Vert {\bf Z}_1\Vert_{L^{2\bar q}}^{2\bar p\bar q})+C_{\varepsilon}\big(\int_0^t1+\Vert A^{\alpha}{\bf u}\Vert^2_{L^{2\bar q}} ds\big)^{\bar p\bar q}. 
\end{align}
Finally, for $II_5$, we observe, using the H\"older inequality, that
\begin{align}\label{estimateii4}
\int_0^t\Vert |{\bf Z}_1|^{\bar q}\Vert_{L^2}^{2(\bar p-2)}\langle |{\bf Z}_1|^{2\bar q-2}{\bf Z}_1,\mathcal P A^{\alpha} {\bf f}&\rangle^2ds\leq \sup_{s\leq t}\Vert |{\bf Z}_1|^{\bar q}\Vert_{L^2}^{2(\bar p-2)}\int_0^t\langle |{\bf Z}_1|^{2\bar q-2}{\bf Z}_1,\mathcal P A^{\alpha} {\bf f}\rangle^2ds\nonumber\\
\leq &\sup_{s\leq t}\Vert |{\bf Z}_1|^{\bar q}\Vert_{L^2}^{2(\bar p-2)}\int_0^t\big(\Vert \sum_{k\geq 1}|\mathcal PA^{\alpha} {\bf f} e_k|^2\Vert_{2\bar q} \Vert |{\bf Z}_1|^{2\bar q-1}\Vert_{L^{\frac{2\bar q}{2\bar q-1}}}^2\big)ds \nonumber\\
\leq &\sup_{s\leq t}\bigg(\Vert {\bf Z}_1\Vert_{L^{2\bar q}}^{2(2\bar q-1)}\Vert {\bf Z}_1\Vert_{L^{2\bar q}}^{2\bar q(\bar p-2)}\bigg)\int_0^t(1+\Vert A^{\alpha}{\bf u}\Vert^2_{L^{2\bar q}})ds\nonumber\\
\leq &\varepsilon (\sup_{s\leq t}\Vert {\bf Z}_1\Vert_{L^{2\bar q}}^{2\bar p\bar q})+ C_{\varepsilon}\bigg(\int_0^t(1+\Vert A^{\alpha}{\bf u}\Vert^2_{L^{2\bar q}})ds\bigg)^{\bar p\bar q}.
\end{align}
We collect (\ref{ii1234})--(\ref{estimateii4}) to conclude that
\begin{align}\label{110newlabelnewdec}
&\mathbb{E}\left( \sup_{t\leq T} \|{\bf Z}_1(t)\|^{2\bar p\bar q}_{L^{2\bar q}} \right)+2\bar p\bar q(2\bar q-1)\int_0^T\Vert |{\bf Z}_1|^{\bar q}\Vert_{L^2}^{2(\bar p-1)}\Vert |{\bf Z}_1|^{\bar q-1}\nabla {\bf Z}_1\Vert_{L^2}^2ds\nonumber\\
\lesssim&  \bigg[\, \mathbb{E} \bigg(\int_0^T  (1 + \|A^{\alpha}{\bf u}(s)\|_{L^{2\bar q}}^{2}) ds\bigg)^{\bar p\bar q}\bigg].
\end{align}

Thus, by choosing $\bar q=\frac{r}{2}$ and $2\bar p\bar q=\frac{pq}{p-q}$ in (\ref{110newlabelnewdec}), we obtain form (\ref{labelphi2apr}) the desired estimate as claimed in \eqref{bounds_nablaphi}:
\begin{align*}
\mathbb{E}\sup_{t\leq T}\Vert A^{\alpha}\Phi_2(n,{\bf u})\Vert^{\frac{pq}{p-q}}_{L^r}\lesssim  &\Vert A^{\alpha} {\bf u}_0\Vert^{{\frac{pq}{p-q}}}_{L^r}+C_Tm^{\frac{2pq}{p-q}}+\mathbb{E}\Vert\int_0^T e^{-(t-s)A}\mathcal  PA^{\alpha}{\bf f}({\bf u}(s))\,dW_s\Vert^{\frac{pq}{p-q}}_{L^r}\\
\lesssim & \Vert A^{\alpha} {\bf u}_0\Vert^{\frac{pq}{p-q}}_{L^r}+C_Tm^{{\frac{2pq}{p-q}}}+\mathbb{E}\Vert\int_0^T e^{-(t-s)A}\mathcal  PA^{\alpha}{\bf f}({\bf u}(s))\,dW_s\Vert^{^{\frac{pq}{p-q}}}_{L^r}\\
\leq & \Vert A^{\alpha}{ \bf u}_0\Vert^{{\frac{pq}{p-q}}}_{L^r}+C_Tm^{{\frac{2pq}{p-q}}}+C_T[1+\mathbb E(\sup_{t\leq T}\Vert A^{\alpha}{\bf u}(t)\Vert_{r}^{{\frac{pq}{p-q}}})].
\end{align*}

{{We next show that $\Phi_1\in C([0,T]; L^p(\mathbb R^2))$ and $\Phi_2\in  C([0,T]; L^{\frac{pq}{p-q}}(\mathbb R^2))$ $\mathbb P$-a.s., where $p$ and $q$ are given in (\ref{pqr}).  In light of (\ref{eq220}), we estimate
\begin{align}\nonumber
\Vert \Phi_1(t+h)&-\Phi_1(t)\Vert_{L^p}\leq \Vert e^{(t+h)\Delta}n_0-e^{t\Delta }n_0\Vert_{L^p}\\
&+ \Big\Vert \int_0^{t+h} e^{(t+h-s)\Delta}[\theta_m\big(\|(n,\mathbf u)\|_{X_s}\big)\,\mathbf u\cdot\nabla n+
\theta_m\big(\|n\|_{X_n^s}\big)\,\nabla\!\cdot(n\nabla c)]\,ds\nonumber\\
&-
\int_0^t e^{(t-s)\Delta}[\theta_m\big(\|(n,\mathbf u)\|_{X_s}\big)\,\mathbf u\cdot\nabla n
+
\theta_m\big(\|n\|_{X_n^s}\big)\,\nabla\!\cdot(n\nabla c)]\,ds\Big\Vert_{L^p}\nonumber\\
\leq & \Vert e^{(t+h)\Delta}n_0-e^{t\Delta }n_0\Vert_{L^p}\nonumber\\
&+\Vert \int_0^t
\big(e^{h\Delta}-I\big)e^{(t-s)\Delta}[\theta_m\big(\|(n,\mathbf u)\|_{X_s}\big)\,\mathbf u\cdot\nabla n+
\theta_m\big(\|n\|_{X_n^s}\big)\,\nabla\!\cdot(n\nabla c)]\,ds\Vert_{L^p}\nonumber\\
&+
\Vert \int_t^{t+h} e^{(t+h-s)\Delta}[\theta_m\big(\|(n,\mathbf u)\|_{X_s}\big)\,\mathbf u\cdot\nabla n+
\theta_m\big(\|n\|_{X_n^s}\big)\,\nabla\!\cdot(n\nabla c)]\,ds\Vert_{L^p}\nonumber\\
:=& III_1+III_2+III_3.
\end{align}
In terms of $III_1$, since $(e^{t\Delta})_{t\ge0}$ is a strongly continuous semigroup on $L^p(\mathbb{R}^2)$, it follows that, as $h\rightarrow 0,$
\begin{align}\label{combine_PHI1_continuity_1}
 \Vert e^{(t+h)\Delta}n_0-e^{t\Delta }n_0\Vert_{L^p}\rightarrow 0.
 \end{align}
For $III_2$, by using Lemma \ref{lemma:semigroup-Lp-Lq}, one finds
\begin{align*}
 &\int_0^t\Vert
e^{(t-s)\Delta}[\theta_m\big(\|(n,\mathbf u)\|_{X_s}\big)\,\mathbf u\cdot\nabla n+
\theta_m\big(\|n\|_{X_n^s}\big)\,\nabla\!\cdot(n\nabla c)]\Vert_{L^p}\,ds\\
\leq &\int_0^t (t-s)^{-\frac{1}{2}+\frac{1}{p}-\frac{1}{q}} \,\theta_m(\|(n,{\bf u})\|_{X_s})
   (\|n(s)\|_{L^{p}} \|\nabla c(s) + {\bf u}(s)\|_{L^{\frac{pq}{p-q}}})ds\leq m^2C_T.
   \end{align*}
Then we apply Lebesgue dominated theorem to obtain that as $h\rightarrow 0,$
\begin{align}\label{combine_PHI1_continuity_2}
  \Big \Vert \int_0^t
\big(e^{h\Delta}-I\big)e^{(t-s)\Delta}[\theta_m\big(\|(n,\mathbf u)\|_{X_s}\big)\,\mathbf u\cdot\nabla n+
\theta_m\big(\|n\|_{X_n^s}\big)\,\nabla\!\cdot(n\nabla c)]\,ds\Big\Vert_{L^p}\rightarrow 0. 
\end{align} 
In terms of $III_3,$ we use Lemma \ref{lemma:semigroup-Lp-Lq} to get as $h\rightarrow0,$
\begin{align}\label{combine_PHI1_continuity_3}
&\Vert \int_t^{t+h} e^{(t+h-s)\Delta}[\theta_m\big(\|(n,\mathbf u)\|_{X_s}\big)\,\mathbf u\cdot\nabla n+
\theta_m\big(\|n\|_{X_n^s}\big)\,\nabla\!\cdot(n\nabla c)]\,ds\Vert_{L^p}\nonumber\\
 \leq &\int_t^{t+h} (t+h-s)^{-\frac{1}{2}+\frac{1}{p}-\frac{1}{q}} \,\theta_m(\|(n,{\bf u})\|_{X_s})
   (\|n(s)\|_{L^{p}} \|\nabla c(s) + {\bf u}(s)\|_{L^{\frac{pq}{p-q}}})ds\nonumber\\
   \leq&m^2\int_t^{t+h}(t+h-s)^{-\frac{1}{2}+\frac{1}{p}-\frac{1}{q}}ds\rightarrow 0.
\end{align}
Combining (\ref{combine_PHI1_continuity_1}) and (\ref{combine_PHI1_continuity_2}) with (\ref{combine_PHI1_continuity_3}), we obtain $\Phi_1\in C([0,T];L^p(\mathbb R^2))$ $\mathbb P$-a.s.  We next discuss the continuity of $\Phi_2$ in time. 

Similarly, we consider for $p$ and $q$ given in (\ref{pqr}),
\begin{align}\label{Phi2_estimate}
\Vert \Phi_2(t+h)&-\Phi_2(t)\Vert_{L^{\frac{pq}{p-q}}} \leq \Vert e^{(t+h)\Delta}{\bf u}_0-e^{t\Delta }{\bf u}_0\Vert_{L^{\frac{pq}{p-q}}}\nonumber\\
 &+\Vert \int_0^t
\big(e^{-(t+h-s)A}-e^{-(t-s)A}\big)[\theta_m\big(\|{\bf u}\|_{X_{\bf u}^s}\big)\,\mathcal P\big(({\bf u}\cdot\nabla){\bf u}\big)+\theta_m\big(\|n\|_{X_n^s}\big)\,\mathcal P\big(n\nabla c\big)(s) ]\,ds\Vert_{L^{\frac{pq}{p-q}}}\nonumber\\
&+
\Vert \int_t^{t+h} e^{-(t+h-s)A}[\theta_m\big(\|{\bf u}\|_{X_{\bf u}^s}\big)\,\mathcal P\big(({\bf u}\cdot\nabla){\bf u}\big)+\theta_m\big(\|n\|_{X_n^s}\big)\,\mathcal P\big(n\nabla c\big)(s) ]\,ds\Vert_{L^{\frac{pq}{p-q}}}\nonumber\\
&+\Vert\int_0^{t+h}e^{-(t+h-s)A}\mathcal P{\bf f}({\bf u}(s))dW_s-\int_0^{t}e^{-(t-s)A}\mathcal P{\bf f}({\bf u}(s))dW_s\Vert_{L^\frac{pq}{p-q}}\nonumber\\
&:=IV_1+IV_2+IV_3.
\end{align}
Proceeding with the same argument as in the derivation of  (\ref{combine_PHI1_continuity_1})-(\ref{combine_PHI1_continuity_3}), we obtain $IV_1$, $IV_2\rightarrow 0$ as $h\rightarrow0.$  In addition, for $IV_3$, noting assumption (H1), we have ${\bf f}:  (L^{\frac{pq}{p-q}}\cap \dot{W}^{1,r})_{\sigma}\rightarrow \gamma (U;(L^{\frac{pq}{p-q}}\cap \dot{W}^{1,r})_{\sigma})$ satisfies
\begin{align}\label{cty}
\mathbb E\bigg(\int_0^{T} \bigg\Vert\bigg(\sum_{k\geq 1}|\mathcal P{\bf f}({{\bf u}(s)})e_k|^2\bigg)^{\frac{1}{2}}\bigg\Vert^{\frac{pq}{p-q}}_{L^{\frac{pq}{p-q}}} ds\bigg)\leq K \mathbb E\bigg(\int_0^{T} (1+\Vert {\bf u}\Vert^{{\frac{pq}{p-q}}}_{L^{\frac{pq}{p-q}}} )(s)ds\bigg)<+\infty, 
\end{align}
where we used $(n,{\bf u})\in S_T.$  It follows from $\mathcal P{\bf f}({\bf u})\in \gamma(U;L^{\frac{pq}{p-q}}_{\sigma})$  that $\mathcal P{\bf f}({\bf u})\in\mathcal M^{\frac{pq}{p-q}}(0,T;\gamma(U;L^{\frac{pq}{p-q}}_{\sigma})).$  Note that $L^{\frac{pq}{p-q}_\sigma}$ satisfies the hypothesis (H1)-(H4) in Theorem 3.2 and Corollary 3.5 (i) in \cite{brzezniak1997stochastic}. Hence,  we deduce that $\int_0^t e^{-(t-s)A}\mathcal P{\bf f}({{\bf u}(s))}dW_s$ has a continuous modification.  Thus, as $h\rightarrow 0$, $IV_3\rightarrow 0$ almost surely. 

Note that $\mathcal P\nabla {\bf f}({\bf u})\in \gamma(U;L^{{r}}_{\sigma})$   with $r=2+\epsilon$ for $\epsilon>0$ small enough and
\begin{align}
\mathbb E\bigg(\int_0^T \bigg\Vert\bigg( \sum_{k\geq 1}|\mathcal P{\nabla\bf f}({\bf u})e_k|^2\bigg)^{\frac{1}{2}}\bigg\Vert_{L^r}^{\frac{pq}{p-q}} ds\bigg)\leq K\mathbb E\bigg(\int_0^T \big(1+\Vert\nabla { \bf u}\Vert^{\frac{pq}{p-q}}_{L^r}\big)(s)ds \bigg)<+\infty,\label{ctynabla}
\end{align}
where we used $(n,{\bf u})\in S_T.$ It follows that $\mathcal P\nabla {\bf f}({\bf u})\in \mathcal M^{\frac{pq}{p-q}}(0,T;L^r_{\sigma})$.  Moreover, we apply Theorem 3.2 and Corollary 3.5 in \cite{brzezniak1997stochastic}, then proceed with the same argument as above  to find $\Vert\nabla  \Phi_2(t+h)-\nabla \Phi_2(t)\Vert_{L^r} \rightarrow 0$ as $h\rightarrow 0$.  

Hence, in summary, we proved that $n$ and ${\bf u}$ are continuous in time almost surely.  
}
 
}
Combining the continuity of $n$ and ${\bf u}$ in time shown above,  (\ref{bounds_phi}) with (\ref{bounds_nablaphi}) together show that $\Phi=(\Phi_1,\Phi_2)$ defined in (\ref{eq220}) and (\ref{eq221}) maps $S_T$ into itself.

Next, we will show that the mapping $\Phi$ is a contraction  in $S_T$. That is, for any $(n_1, {\bf u}_1), (n_2, {\bf u}_2) \in S_T$,
\begin{align}\label{phiestimate}
\mathbb{E} &\big(\sup_{t\leq T}\|\Phi_1(n_1, {\bf u}_1)(t) - \Phi_1(n_2, {\bf u}_2)(t)\|^{\frac{pq}{p-q}}_{L^p}+\sup_{t\leq T}\|\Phi_2(n_1, {\bf u}_1)(t) - \Phi_2(n_2, {\bf u}_2)(t)\|^{\frac{pq}{p-q}}_{L^{\frac{pq}{p-q}}}\nonumber\\
&+\sup_{t\leq T}\|A^{\alpha}\Phi_2(n_1, {\bf u}_1)(t) - A^{\alpha}\Phi_2(n_2, {\bf u}_2)(t)\|^\frac{pq}{p-q}_{L^r}\big)\nonumber\\
\leq& C_{ m,T}(\,\mathbb E \|{\bf u}_1 - {\bf u}_2\|^\frac{pq}{p-q}_{X_{\bf u}^T}+  \mathbb E\|n_1 - n_2\|^\frac{pq}{p-q}_{X_n^T}) .
\end{align}

We begin with finding contraction estimates for $\Phi_1$.
First note that we almost surely have
\begin{align*}
   & \|\Phi_1(n_1, {\bf u}_1)(t) - \Phi_1(n_2, {\bf u}_2)(t)\|_{L^p}
\lesssim \left\|
\int_0^t e^{(t-s)\Delta} \nabla \cdot 
\big[\theta_m(\|n_1\|_{X_n^s})  n_1 \nabla c_1
 - \theta_m(\|n_2\|_{X_n^s})  n_2 \nabla c_2 \big]\, ds
\right\|_{L^p}\\
&\qquad+  \left\|
\int_0^t e^{(t-s)\Delta} \nabla \cdot 
\big[\theta_m(\|{\bf u}_1\|_{X_{\bf u}^s}) \theta_m(\|n_1\|_{X_n^s}){\bf u}_1 n_1
 - \theta_m(\|{\bf u}_2\|_{X_{\bf u}^s}) \theta_m(\|n_2\|_{X_n^s}) {\bf u}_2 n_2 \big]\, ds
\right\|_{L^{p}}\\
&\qquad:= I_1(t) + I_2(t).
\end{align*}

Now, we estimate $I_1$.  Similarly to the proof of (\ref{phi1estimatemar}), we find that
\begin{align}\label{I1nov5}
I_1(t) \lesssim& 
 \int_0^t (t-s)^{- \frac{1}{2}+\frac{1}{p}-\frac{1}{q}} 
\|\theta_m(\|n_1\|_{X_n^s}) n_1 \nabla c_1 
- \theta_m(\|n_2\|_{X_n^s}) n_2 \nabla c_2\|_{L^q}\, ds.
\end{align}
To bound this term, we denote the nonlinearity by:
\[
J(s) = 
\|\theta_m(\|n_1\|_{X_n^s}) n_1(s)\nabla c_1(s)
- \theta_m(\|n_2\|_{X_n^s}) n_2(s)\nabla c_2(s)\|_{L^q}.
\]

To bound $J$, we will distinguish the following six cases.

 \noindent {\bf Case (J1).} Suppose 
\[
\|n_1\|_{X_n^s} 
\vee \|n_2\|_{X_n^s} \le 2m,
\]
then we have
\begin{align*}
J(s) 
&= \|\theta_m(\|n_1\|_{X_n^s}) n_1\nabla c_1
- \theta_m(\|n_2\|_{X_n^s})n_2\nabla c_2\|_{L^q} \\
&\le 
\|\theta_m(\|n_1\|_{X_n^s})n_1\nabla c_1
- \theta_m(\|n_2\|_{X_n^s})n_1\nabla c_1\|_{L^q} \\
&\quad + 
\|\theta_m(\|n_2\|_{X_n^s})n_1\nabla c_1
- \theta_m(\|n_2\|_{X_n^s})n_2\nabla c_2\|_{L^q} \\
&\le 
|\theta_m(\|n_1\|_{X_n^s})
 - \theta_m(\|n_2\|_{X_n^s})
 \|n_1(s)\nabla c_1(s)\|_{L^q} \\
&\quad + \|n_1\nabla c_1(s) - n_2\nabla c_2(s)\|_{L^q} \\
&\le 
|\theta_m(\|n_1\|_{X_n^s})
 - \theta_m(\|n_2\|_{X_n^s})
 \|n_1(s)\|_{L^{p}}\|\nabla c_1(s)\|_{L^{\frac{pq}{p-q}}} \\
&\quad + \|n_1\|_{p}\|\nabla(c_1-c_2)(s)\|_{L^{\frac{pq}{p-q}}}
 + \|n_1(s)-n_2(s)\|_{L^p}\|\nabla c_2(s)\|_{L^{\frac{pq}{p-q}}}.
\end{align*}
Under the assumption (J1), we apply the fact that $\|\theta_m'\|_{\infty} \le \frac{C}{m}$, to find for $J(s)$ that
\begin{align*}
J(s)
&\lesssim 4m^2 \Big(|
\theta_m(\|n_1\|_{X_n^s})
- \theta_m(\|n_1\|_{X_n^s}|\Big) \\
&\quad + 2m(\|\nabla(c_1-c_2)(s)\|_{L^{\frac{pq}{p-q}}} + \|n_1(s)-n_2(s)\|_{L^p}) \\
&\lesssim m^2 \frac{1}{m} 
\left( \|n_1 - n_2\|_{X_n^s}\right)
+ 4m( \|n_1(s)-n_2(s)\|_{X_n^s}) \\
&\lesssim  m \left(  \|n_1 - n_2\|_{X_n^s}\right),
\end{align*}
where we additionally used 
\[
\|n_1 - n_2\|_{X_n^s} = \sup_{r \in [0,s]} \|n_1(r) - n_2(r)\|_{L^{p}}.
\]

 \noindent {\bf Case (J2).} Suppose  $\|n_1\|_{X_n^s} > 2m \text{ and } \|n_2\|_{X_n^s} > 2m$,\text{ then we have }
$$J(s) = 0.$$

 \noindent {\bf Case (J3).} Suppose  $\|n_1\|_{X_n^s} > 2m \text{ and } \|n_2\|_{X_n^s}  \le 2m$, then we have
\begin{align*}
 J(s) &= \|\theta_m(\|n_2\|_{X_n^s})   n_2 \nabla c_2\|_{L^q} \notag \\
&  = |\theta_m(\|n_2\|_{X_n^s}) - \theta_m(\|n_1\|_{X_n^s})| \,  \|n_2 \nabla c_2\|_{L^q} \notag \\
& \lesssim \frac{1}{m} \|n_1 - n_2\|_{X_n^s}\Vert n_2\Vert_{L^{p}}\Vert \nabla c_2\Vert_{L^\frac{pq}{p-q}}\\
&\lesssim m\Vert n_1-n_2\Vert_{X_n^s}.
\end{align*}

 \noindent {\bf Case (J4).}  Suppose  
$\|n_1\|_{X_n^s} < 2m \text{ and } \|n_2\|_{X_n^s} > 2m,$  then we have
\begin{align*}
J(s) &= \|\theta_m(\|n_1\|_{X_n^s})   n_1 \nabla c_1\|_{L^q} \notag \\
& = |\theta_m(\|n_1\|_{X_n^s}) - \theta_m(\|n_2\|_{X_n^s})| \,  \|n_1 \nabla c_1\|_{L^q} \notag \\
& \lesssim \frac{1}{m} \|n_1 - n_2\|_{X_n^s}\Vert n_1\Vert_{L^{p}}\Vert \nabla c_1\Vert_{L^\frac{pq}{p-q}}\\
&\lesssim m\Vert n_1-n_2\Vert_{X^s_n}.
\end{align*}
\noindent Putting (J1)--(J4) together, we get
\begin{equation}
J(s) \le C_m \big( \|n_1 - n_2\|_{X_n^s} \big), \label{eq:3.14}
\end{equation}
where constant $C_m>0$ depends on $m.$ 

 Substituting \eqref{eq:3.14} into (\ref{I1nov5}), we obtain
\begin{align}
 \, I_1(t) &\le C_{m} \big( \|n_1 - n_2\|_{X_n^T} \big) 
\int_0^t (t-s)^{-\frac{1}{2}+\frac{1}{p}-\frac{1}{q}} \, ds.    \label{eq:3.15}
\end{align}

\noindent Now, we turn to the term $I_2$. Our aim is to prove that,
\begin{equation}
\big\| \theta_m(\|{\bf u}_1\|_{X^s_{\bf u}}) \theta_m(\|n_1\|_{X^s_n}) {\bf u}_1 n_1 
- \theta_m(\|{\bf u}_2\|_{X^s_{\bf u}}) \theta_m(\|n_2\|_{X^s_n}) {\bf u}_2 n_2 \big\|_{L^q} 
\le C_m \big( \|{\bf u}_1 - {\bf u}_2\|_{X_s^u} + \|n_1 - n_2\|_{X^s_n} \big).
\end{equation}
As earlier, we define
\begin{align*}
\bar J(s):=\theta_m(\|{\bf  u}_1\|_{X_{\bf u}^s}) \theta_m(\|n_1\|_{X_n^s}) {\bf u}_1 n_1
 - \theta_m(\|{\bf u}_2\|_{X_{\bf u}^s}) \theta_m(\|n_2\|_{X_n^s}) {\bf u}_2 n_2,
\end{align*}
and bound $\bar J$ by splitting into the following six cases.

\noindent {\bf Case ($\mathbf{\bar{J}}$1).} Suppose 
\[
\|n_1\|_{X_n^s} \vee \|{\bf u}_1\|_{X_{\bf u}^s} \vee \|n_2\|_{X_n^s} \vee \|c_2\|_{X_c^s} \le 2m,
\] 
then we have
\begin{align*}
\bar J(s) &= \big\| \theta_m(\|n_1\|_{X_n^s}) \theta_m(\|{\bf u}_1\|_{X_{\bf u}^s}) n_1  {\bf u}_1
- \theta_m(\|n_2\|_{X_n^s}) \theta_m(\|{\bf u}_2\|_{X_{\bf u}^s}) n_2 {\bf u}_2 \big\|_{L^q} \\
&\le \big\| \theta_m(\|n_1\|_{X_n^s}) \theta_m(\|{\bf u}_1\|_{X_s^u}) n_1{\bf u}_1
- \theta_m(\|n_2\|_{X_s^n}) \theta_m(\|{\bf u}_2\|_{X_s^u}) n_1 {\bf u}_1 \big\|_{L^q} \\
&\quad + \big\| \theta_m(\|n_2\|_{X_s^n}) \theta_m(\|{\bf u}_2\|_{X_s^u}) n_1 {\bf u}_1
- \theta_m(\|n_2\|_{X_s^n}) \theta_m(\|{\bf u}_2\|_{X_s^u}) n_2  {\bf u}_2 \big\|_{L^q} \\
&\le \big|\theta_m(\|n_1\|_{X_n^s}) \theta_m(\|{\bf u}_1\|_{X_{\bf u}^s})
- \theta_m(\|n_2\|_{X_n^s}) \theta_m(\|{\bf  u}_2\|_{X_{\bf u}^s}) \big| \, \|n_1(s) {\bf u}_1(s)\|_{L^q} \\
&\quad + \| n_1 {\bf u}_1(s) - n_2 {\bf u}_2(s) \|_{L^q} \\
&\le \big|\theta_m(\|n_1\|_{X_n^s}) \theta_m(\|{\bf u}_1\|_{X^s_{\bf u}})
- \theta_m(\|n_2\|_{X^s_n}) \theta_m(\|{\bf u}_2\|_{X^s_{\bf u}}) \big| \, \|n_1(s)\|_{L^p} \|\nabla {\bf u}_1(s)\|_{L^\frac{pq}{p-q}} \\
&\quad + \|n_1\|_{L^p}\|{\bf u}_1(s) - {\bf u}_2(s)\|_{L^{\frac{pq}{p-q}}} + \|n_1(s) - n_2(s)\|_{L^p}\|{\bf u}_2(s)\|_{L^{\frac{pq}{p-q}}}.
\end{align*}

\noindent For our assumption in (J1) and the fact that $\|\theta_m'\|_\infty \le C m$, we find
\begin{align*}
\bar J(s)
&\lesssim 4m^2 \Big(|
\theta_m(\|n_1\|_{X_n^s}) \theta_m(\|{\bf u}_1\|_{X^s_{\bf u}})
- \theta_m(\|n_2\|_{X^s_n}) \theta_m(\|{\bf u}_2\|_{X^s_{\bf u}})|\Big) \\
&\quad + 2m(\|\nabla(c_1-c_2)(s)\|_{L^{\frac{pq}{p-q}}} + \|n_1(s)-n_2(s)\|_{L^p}) \\
&\lesssim 4m^2 \Big(|
\theta_m(\|n_1\|_{X_n^s}) \theta_m(\|u_1\|_{X^s_{\bf u}})-\theta_m(\|n_1\|_{X_n^s}) \theta_m(\|u_2\|_{X^s_{\bf u}})\\
&+\theta_m(\|n_1\|_{X_n^s}) \theta_m(\|u_2\|_{X^s_{\bf u}})
- \theta_m(\|n_2\|_{X^s_n}) \theta_m(\|u_2\|_{X^s_{\bf u}})|\Big) \\
&\quad + 2m(\|(u_1-u_2)(s)\|_{L^{\frac{pq}{p-q}}} + \|n_1(s)-n_2(s)\|_{L^p})\\
&\lesssim m^2 \frac{1}{m} 
\left( \|n_1 - n_2\|_{X_n^s}+ \|u_1 - u_2\|_{X_{\bf u}^s}\right)
+ 2m( \|n_1(s)-n_2(s)\|_{X_n^s}+ \|u_1(s)-u_2(s)\|_{X_{\bf u}^s}) \\
&\lesssim  m \left(  \|n_1 - n_2\|_{X_n^s}+\|u_1 - u_2\|_{X_{\bf u}^s}\right),
\end{align*}
where $\|n_1 - n_2\|_{X_n^s} = \sup_{r \in [0,s]} \|n_1(r) - n_2(r)\|_{L^p}$.

\medskip
\noindent{\bf Case ($\mathbf{\bar{J}}$2).} Suppose 
\[
\|n_1\|_{X_n^s} \vee \|{\bf u}_1\|_{X_{\bf u}^s} > 2m \quad \text{and} \quad \|n_2\|_{X_n^s} \vee \|{\bf u}_2\|_{X_{\bf u}^s} > 2m.
\]
Then $\bar J(s) = 0$.

\medskip
\noindent {\bf Case ($\mathbf{\bar{J}}$3).} Suppose $\|n_1\|_{X_n^s} > 2m$ and $\|n_2\|_{X_n^s} \vee \|{\bf u}_2\|_{X_{\bf u}^s} \le 2m$, then one has
\begin{align}
\bar J(s) =& \|\theta_m(\|n_2\|_{X_n^s}) \theta_m(\|{\bf u}_2\|_{X_{\bf u}^s}) n_2  {\bf u}_2\|_{L^q}\nonumber\\
\le & \|[\theta_m(\|n_2\|_{X_n^s}) \theta_m(\|{\bf u}_2\|_{X_{\bf u}^s})-\theta_m(\|n_1\|_{X_n^s}) \theta_m(\|{\bf u}_1\|_{X_{\bf u}^s})]n_2  {\bf u}_2\|_{L^q}\nonumber\\
\le &|[\theta_m(\|n_2\|_{X_n^s}) \theta_m(\|{\bf u}_2\|_{X_{\bf u}^s})-\theta_m(\|n_2\|_{X_n^s}) \theta_m(\|{\bf u}_1\|_{X_{\bf u}^s})\nonumber\\
&+\theta_m(\|n_2\|_{X_n^s}) \theta_m(\|{\bf u}_1\|_{X_{\bf u}^s})-\theta_m(\|n_1\|_{X_n^s}) \theta_m(\|{\bf u}_1\|_{X_{\bf u}^s})|\Vert n_2\Vert_{L^{p}}\Vert {\bf u}_2\Vert_{L^{\frac{pq}{p-q}}}\nonumber\\
\le& C_m (\|n_1 - n_2\|_{X_n^s}+\|{\bf u}_1 - {\bf u}_2\|_{X_{\bf u}^s}),
\end{align}
where $C_m>0$ is a constant depending on $m.$

\medskip
\noindent {\bf Case ($\mathbf{\bar{J}}$4).} Suppose $\|{\bf u}_1\|_{X_{\bf u}^s} > 2m$ and $\|n_2\|_{X_n^s} \vee \|{\bf u}_2\|_{X_{\bf u}^s} \le 2m$:
\[
\bar J(s) \le C_m (\|n_1 - n_2\|_{X_n^s}+\|{\bf u}_1 - {\bf u}_2\|_{X_{\bf u}^s}),
\]
where $C_m>0$ is a constant depending on $m.$

\medskip
\noindent The proofs of Cases ($\bar{\text{J}}$5) and ($\bar{\text{J}}$6) are similar to ($\bar{\text{J}}$3) and ($\bar{\text{J}}$4), which are

\noindent {\bf Case ($\mathbf{\bar{J}}$5).} Suppose $\|n_2\|_{X_n^s} > 2m, \ \|n_1\|_{X_n^s} \vee \|{\bf u}_1\|_{X_{\bf u}^s} \le 2m$, then 
$$\bar J(s) \le C_m (\|n_1 - n_2\|_{X_n^s}+\Vert {\bf u}_1-{\bf u}_2\Vert_{X_{\bf u}^s}),$$
for some positive constant $C_m$ depending on $m.$

\medskip
\noindent {\bf Case ($\mathbf{\bar{J}}$6).} Suppose $\|{\bf u}_2\|_{X_{\bf u}^s} > 2m, \ \|n_1\|_{X_n^s} \vee \|{\bf u}_1\|_{X_{\bf u}^s} \le 2m$, then  
$$\bar J(s) \le C_m (\|{\bf u}_1 - {\bf u}_2\|_{X_{\bf u}^s}+\|n_1-n_2\|_{X_n^s}),$$ for some positive constant $C_m$ depending on $m.$

\noindent Putting ($\bar{\text{J}}$1)--($\bar{\text{J}}$6) together, we obtain
\begin{equation}
\bar J(s) \le C_m \big( \|n_1 - n_2\|_{X_n^s} + \|{\bf u}_1 - {\bf u}_2\|_{X_{\bf u}^s} \big),\label{eq:3.141}
\end{equation}
for some positive constant $C_m$ depending on $m$.

\noindent Substituting \eqref{eq:3.141} into (\ref{I1nov5}), we obtain for $t\in(0,T),$ that
\begin{align}
I_2(t) &\le C_{ m} \big(  \|n_1 - n_2\|_{X_n^T} + \|{\bf u}_1 - {\bf u}_2\|_{X_{\bf u}^T} \big) 
\int_0^t (t-s)^{-\frac{1}{2}+\frac{1}{p}-\frac{1}{q}} \, ds \notag \\
&\le C_{ m,T} \big( \|n_1 - n_2\|_{X_n^T} + \|{\bf u}_1 - {\bf u}_2\|_{X_{\bf u}^T} \big),
\label{eq:3.151}
\end{align}
where $C_{m,T}$ is some positive constant depending on $m$ and $T.$ 

Combining (\ref{eq:3.15}) with (\ref{eq:3.151}), we have the following desired estimate for $\Phi_1$: For any $t\in(0,T),$ we have
\begin{align}\label{phi1estimate}
\|\Phi_1(n_1, {\bf u}_1)(t) - \Phi_1(n_2, {\bf u}_2)(t)\|_{L^p}
\leq C_{ m,T} \big( \|n_1 - n_2\|_{X_n^T} + \|{\bf u}_1 - {\bf u}_2\|_{X_{\bf u}^T} \big),
\end{align}
where constant $C_{m,T}>0$ depending on $m$ and $T.$

To ultimately prove \eqref{phiestimate}, we will next estimate $\|\Phi_2(n_1, {\bf u}_1) - \Phi_2(n_2, {\bf u}_2)\|_{L^\frac{pq}{p-q}}$. Note that we have for $t\in(0,T),$
\begin{align}\label{gamma123mar}
\|\Phi_2(n_1, {\bf u}_1)(t)& - \Phi_2(n_2, {\bf u}_2)(t)\|_{L^\frac{pq}{p-q}}\\
&\lesssim  \int_0^t (t-s)^{-\frac{1}{2}+\frac{1}{p}-\frac{1}{q}}
\|\theta_m(\|{\bf u}_1\|_{X_{\bf u}^s})({\bf u}_1  \otimes {\bf u}_1)
 - \theta_m(\|{\bf u}_2\|_{X_{\bf u}^s})({\bf u}_2\otimes{\bf  u}_2)\|_{L^\frac{pq}{2(p-q)}}\, ds\nonumber \\
&\quad +  \int_0^t (t-s)^{-\frac{1}{2}+\frac{1}{p}-\frac{1}{q}}
\|\theta_m(\|n_1\|_{X_n^s})\nabla c_1\otimes \nabla c_1
 - \theta_m(\|n_2\|_{X_n^s})\nabla c_2\otimes \nabla c_2\|_{L^\frac{pq}{2(p-q)}}\, ds\nonumber \\
&\quad +\Big\|
\int_0^t e^{-(t-s)A} \mathcal P[{\bf f}({\bf u}_1(s)) - {\bf f}({\bf u}_2(s))]\, dW_s
\Big\|_{L^\frac{pq}{p-q}} \nonumber\\
&=: \Gamma_1(t) + \Gamma_2(t) + \Gamma_3(t).
\end{align}
To estimate the term $\Gamma_1$, we set
\[
J_1(s) = \|\theta_m(\|{\bf u}_1\|_{X_{\bf u}^s})({\bf u}_1\otimes {\bf u}_1) 
         - \theta_m(\|{\bf u}_2\|_{X_{\bf u}^s})({\bf u}_2\otimes {\bf u}_2)\|_{L^\frac{pq}{2(p-q)}}.
\]

As earlier, we will bound $J_1$ by considering four different cases. 

\medskip
\noindent
{\bf Case ($\mathbf{{J}_1}$1).} Suppose $\|{\bf u}_1\|_{X_{\bf u}^s} \vee \|{\bf u}_2\|_{X_{\bf u}^s} \le 2m$.  
By using the definition of $\theta_m$, we get
\begin{align*}
J_1(s) 
&\le \|{\bf u}_1\otimes {\bf u}_1 -{\bf u}_2\otimes {\bf u}_2\|_{L^\frac{pq}{2(p-q)}} 
   + |\theta_m(\|{\bf u}_1\|_{X_{\bf u}^s}) - \theta_m(\|{\bf u}_2\|_{X_{\bf u}^s})|
     \,\|{\bf{u}}_2\otimes {\bf u}_2\|_{L^\frac{pq}{2(p-q)}} \\
&\le  \, \|{\bf u}_1\|_{L^\frac{pq}{p-q}}\|({\bf u}_1 - {\bf u}_2)\|_{L^\frac{pq}{p-q}} 
   + \|{\bf u}_1 - {\bf u}_2\|_{L^\frac{pq}{p-q}}\|{ \bf u}_2\|_{L^\frac{pq}{p-q}} 
   +  \frac{C}{m}\|{\bf u}_1 - {\bf u}_2\|_{X_{\bf u}^s}\|{\bf u}_2\|_{L^\frac{pq}{p-q}}^2 \\
&\lesssim m\|{\bf u}_1 - {\bf u}_2\|_{X_{\bf u}^s}.
\end{align*}

\medskip
\noindent
{\bf Case ($\mathbf{{J}_1}$2).} Suppose $\|{\bf u}_1\|_{X_{\bf u}^s} \le 2m$ and $\|{\bf u}_2\|_{X_{\bf u}^s} > 2m$.  
We have
\begin{align*}
J_1(s) &= \|\theta_m(\|{\bf u}_1\|_{X_{\bf u}^s})({\bf u}_1\otimes {\bf u}_1)\|_{L^\frac{pq}{2(p-q)}} \\
&= |\theta_m(\|{\bf u}_1\|_{X_{\bf u}^s}) - \theta_m(\|{\bf u}_2\|_{X_{\bf u}^s})|
   \,\|{\bf u}_1\otimes {\bf u}_1\|_{L^\frac{pq}{2(p-q)}} \\
&\lesssim m \|{\bf u}_1 - {\bf u}_2\|_{X_{\bf u}^s}.
\end{align*}

\medskip
\noindent
\noindent{\bf Case ($\mathbf{{J}_1}$3).} Suppose $\|{\bf u}_1\|_{X_{\bf u}^s} > 2m$ and $\|{\bf u}_2\|_{X_{\bf u}^s} \le 2m$.  
As in the case (J$_1$2), we have
\[
J_1(s) \lesssim m \|{\bf u}_1 - {\bf u}_2\|_{X_{\bf u}^s}.
\]

\medskip
\noindent
{\bf Case ($\mathbf{{J}_1}$4).} Suppose $\|{\bf u}_1\|_{X_{\bf u}^s} \wedge \|{\bf u}_2\|_{X_{\bf u}^s} > 2m$, then
\[
J_1(s) = 0.
\]
Hence, it follows by combining these four cases that,
\begin{equation}\label{eq:Gamma1mar}
\Gamma_1(t) \leq \, C_{m,T}\, \|{\bf u}_1 - {\bf u}_2\|_{X_{\bf u}^t}
\end{equation}
for some positive constant $C_{m,T}.$

Similarly, to estimate $\Gamma_{2}$, we define
$$\tilde J(s):=\|\theta_m(\|n_1\|_{X_n^s})\nabla c_1\otimes \nabla c_1
 - \theta_m(\|n_2\|_{X_n^s})\nabla c_2\otimes \nabla c_2\|_{L^\frac{pq}{2(p-q)}},$$
and consider the following cases.

 \noindent {\bf Case ($\mathbf{\tilde{J}}$1).}  Suppose $\|n_1\|_{X_n^s} \vee \|n_2\|_{X_n^s} \le 2m$.  
From the definition of $\theta_m$, we get
\begin{align}
\tilde J(s):=&\|\theta_m(\|n_1\|_{X_n^s})\nabla c_1\otimes \nabla c_1-\theta_m(\|n_2\|_{X_n^s})\nabla c_1\otimes \nabla c_1\nonumber\\
&\qquad+\theta_m(\|n_2\|_{X_n^s})\nabla c_1\otimes \nabla c_1
 - \theta_m(\|n_2\|_{X_n^s})\nabla c_2\otimes \nabla c_2\|_{L^\frac{pq}{2(p-q)}}\nonumber\\
 \leq &\|\theta_m(\|n_1\|_{X_n^s})-\theta_m(\|n_2\|_{X_n^s})\|\|\nabla c_1\otimes \nabla c_1\|_{L^{\frac{pq}{2(p-q)}}}+\|\nabla c_1\otimes \nabla c_1-\nabla c_2\otimes \nabla c_2\|_{L^{\frac{pq}{2(p-q)}}}\nonumber\\
 \lesssim &m\Vert n_1-n_2\Vert_{X_n^s}+\Vert \nabla c_1\otimes \nabla(c_1-c_2)\Vert_{L^{\frac{pq}{2(p-q)}}}+\|\nabla c_2\otimes\nabla(c_1-c_2)\Vert_{L^{\frac{pq}{2(p-q)}}}\nonumber\\
 \lesssim &m\Vert n_1-n_2\Vert_{X_n^s}.
 \end{align}
 \medskip
\noindent
{\bf Case ($\mathbf{\tilde{J}}$2).} Suppose $\|n_1\|_{X_{\bf u}^s} \le 2m$ and $\|n_2\|_{X_{\bf u}^s} > 2m$.  
We have
\begin{align*}
\tilde J(s) &= \|\theta_m(\|n_1\|_{X_{\bf u}^s})(\nabla c_1\otimes \nabla c_1)\|_{L^\frac{pq}{2(p-q)}} \\
&= |\theta_m(\|n_1\|_{X_n^s}) - \theta_m(\|n_2\|_{X_n^s})|
   \,\|\nabla c_1\otimes \nabla c_1\|_{L^\frac{pq}{2(p-q)}} \\
&\lesssim m \|n_1 - n_2\|_{X_n^s}.
\end{align*}

\medskip
\noindent
\noindent{\bf Case ($\mathbf{\tilde{J}}$3).} Suppose $\|n_1\|_{X_n^s} > 2m$ and $\|n_2\|_{X_n^s} \le 2m$.  
Similar to case (J2), we have
\[
\tilde J(s) \lesssim m \|n_1 - n_2\|_{X_{\bf u}^s}.
\]

\medskip
\noindent
{\bf Case ($\mathbf{\tilde{J}}$4).} Suppose $\|n_1\|_{X_{\bf u}^s} \wedge \|n_2\|_{X_n^s} > 2m$.  
Then
\[
\tilde J(s) = 0.
\]
Hence, we have,
\begin{equation}\label{eq:Gamma2mar}
\Gamma_2(t) \leq \, C_{m,T} \|n_1 - n_2\|_{X_n^T}, \quad t \le T,
\end{equation}
for some positive constant $C_{m,T}.$

To find bounds for $\Gamma_3$ we consider $\Gamma_3(t) = \|{\bf Z}_2(t)\|_{L^\frac{pq}{p-q}}$, where ${\bf Z}_2$ satisfies the following SPDE:
\begin{equation}
\begin{cases}
d{\bf Z}_2(s) =- A  {\bf Z}_2(s)\, ds +\mathcal P[{\bf f}({\bf u}_1(s)) - {\bf f}({\bf u}_2(s))]\, dW_s,\\[4pt]
{\bf Z}_2(0) = {\bf 0}.
\end{cases}
\end{equation}
Using It\^{o}'s formula, the BDG inequality and Young's inequality, we have
\begin{align}\label{110newlabel1}
\frac{1}{2}\mathbb{E}\left( \sup_{t\leq T} \|{\bf Z}_2(t)\|^\frac{pq}{p-q}_{L^\frac{pq}{p-q}} \right)
\lesssim  \, \mathbb{E} \bigg(\int_0^T  (\|{\bf u}_1(s)-{\bf u}_2(s)\|_{L^\frac{pq}{p-q}}^\frac{pq}{p-q}) ds\bigg)
\leq & C_T \mathbb E(\Vert {\bf u}_1-{\bf u}_2\Vert_{X_{\bf u}^T}^\frac{pq}{p-q}).
\end{align}
for some positive constant $C_T>0.$

In conclusion, we can deduce from (\ref{eq:Gamma1mar}), (\ref{eq:Gamma2mar}) and (\ref{gamma123mar}) that we have,
\begin{align}\label{eq318}
\mathbb E\big[\sup_{t\leq T}\|\Phi_2(n_1, {\bf u}_1)(t) &- \Phi_2(n_2, {\bf u}_2)(t)\|^\frac{pq}{p-q}_{L^{\frac{pq}{p-q}}}\big]
\lesssim  \Gamma_1^\frac{pq}{p-q}(t) + \Gamma^{\frac{pq}{p-q}}_2(t) + \Gamma^{\frac{pq}{p-q}}_3(t)\nonumber\\
\leq  C_{m,T} & [\,\mathbb E \|{\bf u}_1 - {\bf u}_2\|^\frac{pq}{p-q}_{X_{\bf u}^T}+  \mathbb E\|n_1 - n_2\|^\frac{pq}{p-q}_{X_n^T}+\mathbb E(\Vert {\bf u}_1-{\bf u}_2\Vert_{X_{\bf u}^T}^\frac{pq}{p-q})]
\end{align}
for some positive constant $C_{m,T}$ depending on $m$ and $T.$

Now we estimate $\|A^{\alpha}\Phi_2(n_1, {\bf u}_1) - A^{\alpha}\Phi_2(n_2, {\bf u}_2)\|_{L^r}$.  We find
\begin{align}\label{combine1mar6}
&\|A^{\alpha}\Phi_2(n_1, {\bf u}_1)(t) - A^{\alpha}\Phi_2(n_2, {\bf u}_2)(t)\|_{L^r}
\nonumber\\
\lesssim&  \int_0^t (t-s)^{-\frac{1}{2}+\frac{1}{r}-\frac{pq+rp-rq}{rpq}}
\|\theta_m(\|{\bf u}_1\|_{X_{\bf u}^s})({\bf u}_1  \cdot \nabla {\bf u}_1)
 - \theta_m(\|{\bf u}_2\|_{X_{\bf u}^s})({\bf u}_2  \cdot \nabla {\bf u}_2)\|_{L^\frac{rpq}{pq+rp-rq}}\, ds\nonumber \\
&\quad +  \int_0^t (t-s)^{-\frac{1}{2}+\frac{1}{r}-\frac{1}{q}}
\|\theta_m(\|n_1\|_{X_n^s})n_1\nabla c_1
 - \theta_m(\|n_2\|_{X_n^s})n_2\nabla c_2\|_{L^q}\, ds \nonumber\\
&\quad +\Big\|
\int_0^t e^{(t-s)\Delta} \mathcal PA^{\alpha}[{\bf f}({\bf u}_1(s)) - {\bf f}({\bf u}_2(s))]\, dW_s
\Big\|_{L^r} \nonumber\\
&=: \bar \Gamma_1(t) + \bar \Gamma_2(t) + \bar\Gamma_3(t).
\end{align}
To estimate $\bar\Gamma_1$, set
\[
\bar J_1(s) = \|\theta_m(\|{\bf u}_1\|_{X_{\bf u}^s})({\bf u}_1\cdot \nabla {\bf u}_1) 
         - \theta_m(\|{\bf u}_2\|_{X_{\bf u}^s})({\bf u}_2\cdot \nabla {\bf u}_2)\|_{L^\frac{rpq}{pq+rp-rq}}.
\]
We will bound $\bar J_1$ in four different cases. 

\medskip
\noindent
{\bf Case ($\mathbf{\bar{J}_1}$1).} Suppose $\|{\bf u}_1\|_{X_{\bf u}^s} \vee \|{\bf u}_2\|_{X_{\bf u}^s} \le 2m$.  
In light of the definition of $\theta_m$, we get
\begin{align*}
\bar J_1(s) 
&\le \|{\bf u}_1\cdot\nabla {\bf u}_1 -{\bf u}_2\cdot\nabla {\bf u}_2\|_{L^\frac{rpq}{pq+rp-rq}} 
   + |\theta_m(\|{\bf u}_1\|_{X_{\bf u}^s}) - \theta_m(\|{\bf u}_2\|_{X_{\bf u}^s})|
     \,\|{\bf u}_2\cdot\nabla {\bf u}_2\|_{L^\frac{rpq}{pq+rp-rq}} \\
&\le  \, \|{\bf u}_1\|_{L^\frac{pq}{p-q}}\|\nabla ({\bf u}_1 - {\bf u}_2)\|_{L^r} 
   + \|{\bf u}_1 - {\bf u}_2\|_{L^\frac{pq}{p-q}}\| \nabla {\bf u}_2\|_{L^r} 
   +  \frac{C}{m}\|{\bf u}_1 - {\bf u}_2\|_{X_{\bf u}^s}\|{\bf u}_2\|_{L^\frac{pq}{p-q}}\Vert\nabla {\bf u}_2\Vert_r \\
&\lesssim m\|{\bf u}_1 -{\bf u}_2\|_{X_{\bf u}^s}.
\end{align*}

\medskip
\noindent
{\bf Case ($\mathbf{\bar{J}_1}$2).} Suppose $\|{\bf u}_1\|_{X_{\bf u}^s} \le 2m$ and $\|{\bf u}_2\|_{X_{\bf u}^s} > 2m$.  
We have
\begin{align*}
\bar J_1(s) &= \|\theta_m(\|{\bf u}_1\|_{X_{\bf u}^s})({\bf u}_1\cdot \nabla {\bf u}_1)\|_{L^\frac{rpq}{pq+rp-rq}} \\
&= |\theta_m(\|{\bf u}_1\|_{X_{\bf u}^s}) - \theta_m(\|{\bf u}_2\|_{X_{\bf u}^s})|
   \,\|{\bf u}_1\cdot\nabla {\bf u}_1\|_{L^\frac{rpq}{pq+rp-rq}} \\
&\lesssim m \|{\bf u}_1 - {\bf u}_2\|_{X_{\bf u}^s}.
\end{align*}

\medskip
\noindent
\noindent{\bf Case ($\mathbf{\bar{J}_1}$3).} Suppose $\|{\bf u}_1\|_{X_{\bf u}^s} > 2m$ and $\|{\bf u}_2\|_{X_{\bf u}^s} \le 2m$.  
Similar to case (J2), we have
\[
\bar J_1(s) \lesssim m \|{\bf u}_1 - {\bf u}_2\|_{X_{\bf u}^s}.
\]

\medskip
\noindent
{\bf Case ($\mathbf{\bar{J}_1}$4).} Suppose $\|{\bf u}_1\|_{X_{\bf u}^s} \wedge \|{\bf u}_2\|_{X_{\bf u}^s} > 2m$.  
Then
\[
\bar J_1(s) = 0.
\]

\medskip
Hence, it follows that,
\begin{equation}\label{eq:Gamma1mar6}
\bar \Gamma_1(t) \leq \, C_{m,T}\, \|{\bf u}_1 - {\bf u}_2\|_{X_{\bf u}^t}.
\end{equation}

Similarly, to treat $\bar\Gamma_2$, we define
$$\tilde J_1(s):=\|\theta_m(\|n_1\|_{X_n^s})n_1 \nabla c_1
 - \theta_m(\|n_2\|_{X_n^s}) n_2 \nabla c_2\|_{L^q}.$$
Consider the following four cases:\\
 \noindent {\bf Case ($\mathbf{\tilde{J}_1}$1).}  Suppose $\|n_1\|_{X_n^s} \vee \|n_2\|_{X_n^s} \le 2m$.  
From the definition of $\theta_m$, we get
\begin{align}
\tilde J_1(s):=&\|\theta_m(\|n_1\|_{X_n^s})n_1 \nabla c_1-\theta_m(\|n_2\|_{X_n^s})n_1 \nabla c_1+\theta_m(\|n_2\|_{X_n^s})n_1 \nabla c_1
 - \theta_m(\|n_2\|_{X_n^s})n_2 \nabla c_2\|_{L^q}\nonumber\\
 \leq &\|\theta_m(\|n_1\|_{X_n^s})-\theta_m(\|n_2\|_{X_n^s})\|\|n_1 \nabla c_1\|_{L^q}+\|n_1\nabla c_1-n_2 \nabla c_2\|_{L^q}\nonumber\\
 \lesssim &m\Vert n_1-n_2\Vert_{X_n^s}+\Vert  (n_1-n_2)\nabla c_1\Vert_{L^{\frac{pq}{2(p-q)}}}+\|n_2\nabla(c_1-c_2)\Vert_{L^q}\nonumber\\
 \lesssim &m\Vert n_1-n_2\Vert_{X_n^s}.
 \end{align}
 \medskip
\noindent
{\bf Case ($\mathbf{\tilde{J}_1}$2).} Suppose $\|n_1\|_{X_{\bf u}^s} \le 2m$ and $\|n_2\|_{X_{\bf u}^s} > 2m$.  
We have
\begin{align*}
\tilde J_1(s) &= \|\theta_m(\|n_1\|_{X_{\bf u}^s})n_1\nabla c_1\|_{L^q} \\
&= |\theta_m(\|n_1\|_{X_n^s}) - \theta_m(\|n_2\|_{X_n^s})|
   \,\|n_1 \nabla c_1\|_{L^q} \\
&\lesssim m \|n_1 - n_2\|_{X_n^s}.
\end{align*}

\medskip
\noindent
\noindent{\bf Case ($\mathbf{\tilde{J}_1}$3).} Suppose $\|n_1\|_{X_n^s} > 2m$ and $\|n_2\|_{X_n^s} \le 2m$.  
Similar to case (J2), we find
\[
\tilde J_1(s) \lesssim m \|n_1 - n_2\|_{X_{\bf u}^s}.
\]

\medskip
\noindent
{\bf Case ($\mathbf{\tilde{J}_1}$4).} Suppose $\|n_1\|_{X_{\bf u}^s} \wedge \|n_2\|_{X_n^s} > 2m$.  
Then
\[
\tilde J_1(s) = 0.
\]
Combining these cases, we have thus shown that
\begin{equation}\label{eq:Gamma2mar6}
\bar\Gamma_2(t) \lesssim \,C_T \|n_1 - n_2\|_{X_n^T}, \quad t \le T.
\end{equation}
Next, note that $\bar\Gamma_3(t) = \|{\bf Z}_3(t)\|_{L^r}$, where ${\bf Z}_3$ satisfies the following SPDE:
\begin{equation}
\begin{cases}
d{\bf Z}_3(s) = -A {\bf Z}_3(s)\, ds +\mathcal PA^{\alpha}[{\bf f}({\bf u}_1(s)) - {\bf f}({\bf u}_2(s))]\, dW_s,\\[4pt]
{\bf Z}_3(0) = {\bf 0}.
\end{cases}
\end{equation}
Using It\^{o}'s formula, BDG inequality, Young's inequality and assumption (H2), we obtain
\begin{align}\label{110newlabel1decmar}
\frac{1}{2}\mathbb{E}\left( \sup_{t\leq T} \|{\bf Z}_3(t)\|^{\frac{pq}{p-q}}_{L^r} \right)
\lesssim&  \, \mathbb{E} \bigg(\int_0^T  (\|A^{\alpha}({\bf u}_1-{\bf u}_2)(s)\|_{L^r}^\frac{pq}{p-q}) ds\bigg)
\lesssim  C_T \mathbb E(\Vert {\bf u}_1-{\bf u}_2\Vert_{X_{\bf u}^T}^\frac{pq}{p-q})
\end{align}
for some constant $C_T>0$ depending on $T.$  We collect (\ref{combine1mar6}), (\ref{eq:Gamma1mar6}), (\ref{eq:Gamma2mar6}) and (\ref{110newlabel1decmar}) to obtain
\begin{align}\label{eq318mar6}
\mathbb E\big[\sup_{t\leq T}\|A^{\alpha}\Phi_2(n_1, {\bf u}_1)(t) - &A^{\alpha}\Phi_2(n_2, {\bf u}_2)(t)\|^\frac{pq}{p-q}_{L^r}\big]
\lesssim  \bar\Gamma_1^\frac{pq}{p-q}(t) + \bar \Gamma^{\frac{pq}{p-q}}_2(t) + \bar \Gamma^{\frac{pq}{p-q}}_3(t)\nonumber\\
\leq  C_{m,T} & [\,\mathbb E \|{\bf u}_1 - {\bf u}_2\|^\frac{pq}{p-q}_{X_{\bf u}^T}+  \mathbb E\|n_1 - n_2\|^\frac{pq}{p-q}_{X_n^T}+\mathbb E(\Vert {\bf u}_1-{\bf u}_2\Vert_{X_{\bf u}^T}^\frac{pq}{p-q})].
\end{align}

By virtue of (\ref{phi1estimate}), (\ref{eq318}) and (\ref{eq318mar6}), we conclude the proof of the desired estimate \eqref{phiestimate} restated below:
\begin{equation}\label{eq:phi_estimate}
\|{\bf\Phi}(n_1, {\bf u}_1) - {\bf \Phi}(n_2,  {\bf u}_2)\|^{\frac{pq}{p-q}}_{S_T}
\le C_{m,T} \|(n_1, {\bf u}_1) - (n_2,  {\bf u}_2)\|^{\frac{pq}{p-q}}_{S_T},
\end{equation}
where $C_{m,T}$ is some positive constant depending on $m$ and $T.$

We choose $T = T_m$ for which $C_{m,T} = \frac{1}{2}$. For this choice, \eqref{eq:phi_estimate} tells us that $\Phi$ is a contraction on the space $S_{T_m}$.  Observe that the deterministic constant $T_m>0$ does not depend on $n_0,{\bf u}_0$.
Therefore, thanks to the Banach fixed point theorem, we conclude that there exists a unique element
$(n^1_m, {\bf u}^1_m) \in S_{T_m}$ that solves (\ref{3.1}) for $t \in [0, T_m]$.

If $T_m>0$ can be chosen arbitrarily large, then we are done. If not, then next we will extend this solution in time and therefore prove the existence of global-in-time solution for the approximation system \eqref{3.1}.
For that purpose, we define for any $T>0$ and $m>0$, the following closed subspace of $S_T$ (see \eqref{norm-S_T}):
\begin{align*}
&S_T^1 =\{ \{\mathcal{F}_t\}_{t \in [0,T]}-\text{adapted, } L^{p}(\mathbb R^2)  \times (L^\frac{pq}{p-q}(\mathbb R^2)\cap \dot{W}^{1,r}(\mathbb R^2
)) \text{-valued processes }
(n(t), {\bf u}(t))_{t\in[0,T]}\\
&\text{such that }
(n,  {\bf u}) = (n^1_m,  {\bf u}^1_m) \quad \text{on } [0, T_m], \ \mathbb{P}\text{-a.s.}\}.
\end{align*}
Then $(S_T^1, \|\cdot\|_{S_T})$ is a Banach space, where $S_T$-norm is defined in \eqref{norm-S_T}.

We introduce a mapping ${\bf\Phi}^1 = (\Phi_1^1,  \Phi_2^1)$ on $S_T^1$ by defining
\begin{align*}
\Phi_1^1(n, {\bf u})(t + T_m)
&:= e^{t\Delta} n_m(T_m)
 - \int_{T_m}^{T_m + t} e^{(T_m + t - s)\Delta}
[ \theta_m(\|n\|_{X^s_n}) 
 \nabla \cdot (n \nabla c) + \theta_m\big(\|(n,{\bf u})\|_{X_s}\big){\bf u}\cdot\nabla n ]\, ds \\
\Phi_2^1(n,{\bf u})(t + T_m)
&:= e^{-tA} {\bf u}_m(T_m)
 - \int_{T_m}^{T_m + t} e^{-(T_m + t - s)A}
 \theta_m(\|{\bf u}\|_{X^s_{\bf u}}) \mathcal P\{({\bf u}(s) \cdot \nabla){\bf u}(s)\}\, ds \\
&\quad - \int_{T_m}^{T_m + t} e^{-(T_m + t - s)A}
 \theta_m(\|n\|_{X^s_n}) \mathcal P\{\nabla\cdot(\nabla c(s)\otimes \nabla c(s))\}\, ds \\
&\quad + \int_{T_m}^{T_m + t} e^{-(T_m + t - s)A}
 {\bf f}(u(s))\, dW_s.
\end{align*}

Using the same arguments as above, we can prove that ${\bf \Phi}^1$ is a contraction mapping on the space $S_{2T_m}^1$ endowed with the norm defined in \eqref{norm-S_T}. Moreover, since $T_m$ does not depend on the initial data we also have that ${\bf \Phi}^1$ satisfies the estimate \eqref{eq:phi_estimate}. This again, gives us a solution $(n^2_m,{\bf u}^2_m)\in S^1_{2T_m}$ for the approximation system (\ref{3.1}) on the time interval $[0,2T_m]$ such that it agrees with our constructed solution $(n^1_m,{\bf u}^1_m)$ on $[0,T_m]$.

Hence, for any $k\geq 0$ and $T=kT_m$, we repeat this argument $k$-many times and thus obtain, for any $T>0$,  
$$\text{a solution
$(n_m, {\bf u}_m) \in S_T$ 
for the approximation system (\ref{3.1}) on $[0,T]$. }$$
In other words, we have constructed a global-in-time solution to the approximation system \eqref{3.1}. This finishes Step 1.\\

\noindent Next, we proceed to {\bf Step 2}:\\
{Observe that, for as long as $\theta_m$ appearing in \eqref{3.1} is equal to 1, the solution $(n_m, {\bf u}_m)$ to \eqref{3.1} constructed above, defines is in fact a solution to the original system \eqref{PKSNS-time-dependent}. This motivates the following definition of a stopping time}.
\begin{equation}\label{eq:tau_m}
\tau_m = \inf\big\{t > 0 \; ; \;
\|n_m\|_{X^t_n}  \vee
\|{\bf u}_m\|_{X^t_{\bf u}} \ge m \big\}.
\end{equation}
Using standard arguments, we can show that $\tau_m$ is indeed an $\{\mathcal{F}_t\}_{t\geq 0}-$stopping time.

Then, using 
$$(n_m,{\bf u}_m)\in L^{\frac{pq}{p-q}}(\Omega; C([0,T];L^p(\mathbb R^2)))\times\big( L^{\frac{pq}{p-q}}(\Omega; C([0,T];L^p(\mathbb R^2)\cap {\dot{W}}^{1,r}(\mathbb R^2))) \big)$$
with $p$, $q$ and $r$ given in (\ref{pqr}), we can further show that for $m \gg \|n_0\|_p \vee \|{\bf u}_0\|_{\frac{pq}{p-q}}\vee \Vert \nabla {\bf u}_0\Vert_r $,
this stopping time is almost surely strictly positive, i.e.
\[
\mathbb{P}(\tau_m > 0) = 1.
\]
Therefore, as described above, due to  the definition of $\theta_m$, it follows that for any $m \geq 0$,
\begin{align}\label{localsol}
    (n_m,  {\bf u}_m ,\tau_m) 
\text{ is a local-in-time mild solution to the system } ~(\ref{PKSNS-time-dependent}).
\end{align}

This finishes Step 2 and thus concludes the proof of Theorem \ref{thm3mar8}.

\end{proof}
\begin{theorem}\label{thm:unique}
    Assume that $(n^1,{\bf u}^1,\tau^1)$ and $(n^2,{\bf u}^2,\tau^2)$ are two local solutions to \eqref{PKSNS-time-dependent} in the sense of Definition \ref{def:local_solution} (see Theorem \ref{thm3mar8}). Then,
    $$(n_1,{\bf u}_1)=(n_2,{\bf u}_2) \quad\text{ in } L^p(\mathbb R^2)\times L^{\frac{pq}{p-q}}(\mathbb R^2)\quad \text{ for every } t\leq \tau^1\wedge \tau^2, \quad \mathbb P-a.s.$$
\end{theorem}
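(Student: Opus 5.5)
We prove pathwise uniqueness by localizing both solutions below a common level and then running the contraction estimates from the proof of Theorem \ref{thm3mar8} on the difference, now with no cut-off and with a Gronwall argument in place of smallness of $T$.

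\textbf{Step 1: localization.} For $R>0$ introduce the stopping time
\[
\sigma_R:=\inf\big\{t\ge 0:\ \|n^1\|_{X_n^t}\vee\|{\bf u}^1\|_{X_{\bf u}^t}\vee\|n^2\|_{X_n^t}\vee\|{\bf u}^2\|_{X_{\bf u}^t}\ge R\big\}\wedge\tau^1_l\wedge\tau^2_l\wedge T,
\]
where $\tau^i_l\uparrow\tau^i$ are the localizing sequences from Definition \ref{def:local_solution}. By path continuity of the mild solutions in $X_n^t\times X_{\bf u}^t$, we have $\sigma_R\uparrow\tau^1\wedge\tau^2\wedge T$ a.s. as $R,l\to\infty$. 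It therefore suffices to show that $n^1=n^2$ and ${\bf u}^1={\bf u}^2$ on $[0,\sigma_R]$ a.s. for each fixed $R$, $l$ and $T$, and then let $R,l,T\to\infty$.

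\textbf{Step 2: difference estimates.} Set $\delta n=n^1-n^2$ and $\delta{\bf u}={\bf u}^1-{\bf u}^2$. Subtract the mild formulations \eqref{3p1mar}--\eqref{3p2mar} at time $t\wedge\sigma_R$, writing every drift integrand with the indicator $\mathbf 1_{\{s\le\sigma_R\}}$. All norms of $n^i,{\bf u}^i$ are bounded by $R$ on this interval, so the bilinear estimates of Case (J1), ($\bar{\rm J}$1), (J$_1$1), ($\tilde{\rm J}$1), ($\bar{\rm J}_1$1) apply verbatim with $m$ replaced by $R$ and $\theta\equiv1$. These come from Lemma \ref{lemma28} and Lemma \ref{lemma:semigroup-Lp-Lq}. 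Define
\[
Y(t):=\|\delta n\|_{X_n^{t\wedge\sigma_R}}+\|\delta{\bf u}\|_{X_{\bf u}^{t\wedge\sigma_R}}.
\]
The drift terms are then bounded pathwise by
\[
C_R\int_0^t (t-s)^{-\gamma}\,Y(s)\,ds,\qquad \gamma=\tfrac12-\tfrac1p+\tfrac1q<1,
\]
together with the analogous exponent for the gradient part.

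The stochastic terms are handled as for ${\bf Z}_2$ and ${\bf Z}_3$, through the It\^o/BDG estimates \eqref{110newlabel1} and \eqref{110newlabel1decmar} with (H2), applied to integrands stopped at $\sigma_R$. Raise everything to the power $\frac{pq}{p-q}$, take $\sup_{s\le t}$ and expectations, and use H\"older on the singular kernel. This gives
\[
\phi(t):=\mathbb E\,Y(t)^{\frac{pq}{p-q}}\le C_{R,T}\int_0^t(t-s)^{-\gamma'}\phi(s)\,ds,
\]
where $\gamma'<1$ after H\"older, valid for $\epsilon$ small as in \eqref{pqr}.

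\textbf{Step 3: conclusion and main obstacle.} A singular (Henry-type) Gronwall inequality, or iterating the inequality finitely many times to remove the singularity, gives $\phi\equiv0$ on $[0,T]$. Alternatively, one can argue on short intervals $[kT_R,(k+1)T_R]$ where $C_{R,T_R}\le\frac12$, as in Step 1 of Theorem \ref{thm3mar8}. Hence $\delta n=0$ and $\delta{\bf u}=0$ on $[0,\sigma_R]$ a.s., and letting $R,l,T\to\infty$ yields the claim for $t\le\tau^1\wedge\tau^2$.

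The main obstacle is the stochastic convolution. One must make sure the stopping does not destroy adaptedness, which is why we write it with the indicator $\mathbf 1_{\{s\le\sigma_R\}}\mathcal P[{\bf f}({\bf u}^1)-{\bf f}({\bf u}^2)]$ rather than stopping the time argument. The other point to check is that the It\^o-formula bound on ${\bf Z}_2$ controls $\sup_{t\le\sigma_R}$ of the stopped convolution. We would handle both by noting that the stopped convolution coincides with the stochastic convolution of the truncated integrand up to $\sigma_R$, and then applying (H2) to this integrand.
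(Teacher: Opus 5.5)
Your proposal is correct and follows essentially the same route as the paper: localize both solutions below level $R$ with a stopping time, subtract the mild formulations, and bound the drift differences with the semigroup and Hardy--Littlewood--Sobolev estimates (constants $C_R$), the stochastic convolution of $\mathcal P[{\bf f}({\bf u}^1)-{\bf f}({\bf u}^2)]$ via It\^o/BDG and (H2), then close with Gronwall and let $R\to\infty$. The differences are minor: you also carry $\|\nabla\delta{\bf u}\|_{L^r}$ in $Y$, which is unnecessary because the paper's estimates already close in $L^p\times L^{\frac{pq}{p-q}}$ by writing $\mathcal P(n\nabla c)=-\mathcal P\nabla\cdot(\nabla c\otimes\nabla c)$ and $({\bf u}\cdot\nabla){\bf u}=\nabla\cdot({\bf u}\otimes{\bf u})$; and you treat the localizing sequences $\tau^i_l$ and the singular kernel (via H\"older before Gronwall) more carefully than the paper does.
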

\begin{proof}
 Suppose that $(n_1, {\bf u}_1, \tau_1)$ and $(n_2,  {\bf u}_2, \tau_2)$ are two local mild  solutions of system (\ref{PKSNS-time-dependent}).  
Set $\tau = \tau_1 \wedge \tau_2$.   Define
\begin{align}\label{settauiRmar8}
  \tau_i^R = \inf\{t \ge 0 : \|n_i\|_{X_n^t}  + \|{\bf u}_i\|_{X_{\bf u}^t} \ge R\} \wedge \tau_i, 
  \quad i = 1, 2,
\end{align}
and set $\tau_R = \tau_1^R \wedge \tau_2^R$.  Noting that  $\nabla \cdot {{\bf u}_i} = 0$, we have $\nabla \cdot ({\bf u}_i(t) n_i(t)) = {\bf u}_i(t) \cdot \nabla n_i(t)$ for $i = 1, 2$.  By using $n_i$-equation and ${\bf u}_i$-equation, we then obtain 
\begin{align*}
n_1(t)-n_2(t)=\int_0^te^{(t-s)\Delta}\nabla\cdot(n_1\nabla c_1-n_2\nabla c_2+n_1{\bf u}_1-n_2{\bf u}_2)(s)ds,
\end{align*}
and
\begin{align*}
{\bf u}_1(t)-{\bf u}_2(t)=&\int_0^te^{-(t-s)A}\nabla\cdot({\bf u}_1\otimes {\bf u}_1-{\bf u}_2\otimes {\bf u}_2+\nabla c_1\otimes \nabla c_1-\nabla c_2\otimes \nabla c_2)(s)ds\\
&+\int_0^t e^{-(t-s)A}[{\bf f}({\bf u}_1)-{\bf f}({\bf u}_2)](s)dW_s.
\end{align*}
We have from (\ref{eq:3.4a}), (\ref{eq:3.4b}) and (\ref{eq:3.4a1}) that 
\begin{align}\label{combine1mar8sun}
&\Vert n_1-n_2\Vert_{L^p}\nonumber\\
\lesssim&\int_0^t(t-s)^{-\frac{1}{2}+\frac{1}{p}-\frac{1}{q}}(\Vert n_1\nabla c_1-n_2\nabla c_1\Vert_{L^q}+\Vert n_2\nabla c_1-n_2\nabla c_2\Vert_{L^q}+ \Vert n_1{\bf u}_1-n_2{\bf u}_1\Vert_{L^q}+\Vert n_2{\bf u}_1-n_2{\bf u}_2\Vert_{L^q})ds\nonumber\\
\lesssim &\int_0^t(t-s)^{-\frac{1}{2}+\frac{1}{p}-\frac{1}{q}}(\Vert \nabla c_1\Vert_{L^\frac{pq}{p-q}}\Vert n_1-n_2\Vert_{L^p}+\Vert n_2\Vert_{L^p}\Vert \nabla c_1-\nabla c_2\Vert_{L^{\frac{pq}{p-q}}}\nonumber\\
&+\Vert u_1\Vert_\frac{pq}{p-q}\Vert n_1-n_2\Vert_{L^p}+\Vert n_2\Vert_{L^p}\Vert {\bf u}_1-{\bf u}_2\Vert_{L^{\frac{pq}{p-q}}})ds\nonumber\\
\lesssim &\int_0^t(t-s)^{-\frac{1}{2}+\frac{1}{p}-\frac{1}{q}}(C_R\Vert n_1-n_2\Vert_{L^p}+C_R\Vert {\bf u}_1-{\bf u}_2\Vert_{L^{\frac{pq}{p-q}}})ds,
\end{align}
where $C_R>0$ is a constant depending on $R$ given in (\ref{settauiRmar8}).  
In addition,
\begin{align}\label{uniquenumber1mar8}
\Vert {\bf u}_1-{\bf u}_2\Vert_{L^\frac{pq}{p-q}}\lesssim&\int_0^t(t-s)^{-\frac{1}{2}+\frac{1}{p}-\frac{1}{q}}(\Vert {\bf u}_1\otimes {\bf u}_1-{\bf u}_1\otimes {\bf u}_2\Vert_{L^{\frac{pq}{2(p-q)}}}+\Vert {\bf u}_1\otimes {\bf u}_2-{\bf u}_2\otimes {\bf u}_2\Vert_{L^{\frac{pq}{2(p-q)}}}\nonumber\\
&+\Vert \nabla c_1\otimes  \nabla c_1-\nabla c_1\otimes \nabla c_2\Vert_{L^{\frac{pq}{2(p-q)}}}+\Vert \nabla c_1\otimes \nabla c_2-\nabla c_2\otimes \nabla c_2\Vert_{L^{\frac{pq}{2(p-q)}}})ds\nonumber\\
&+\Vert \int_0^t e^{-(t-s)A}[{\bf f}({\bf u}_1)-{\bf f}({\bf u}_2)](s)dW_s\Vert_{L^{\frac{pq}{p-q}}}\nonumber\\
\lesssim &\int_0^t(t-s)^{-\frac{1}{2}+\frac{1}{p}-\frac{1}{q}}(\Vert {\bf u}_1\Vert_{L^{\frac{pq}{p-q}}}\Vert {\bf u}_1-{\bf u}_2\Vert_{L^{\frac{pq}{p-q}}}+\Vert {\bf u}_2\Vert_{L^{\frac{pq}{p-q}}}\Vert {\bf u}_1- {\bf u}_2\Vert_{L^{\frac{pq}{p-q}}}\nonumber\\
&+\Vert \nabla c_1\Vert_{L^{\frac{pq}{p-q}}}\Vert \nabla c_1-\nabla c_2\Vert_{L^{\frac{pq}{p-q}}}+\Vert \nabla c_2\Vert_{L^{\frac{pq}{p-q}}}\Vert\nabla c_1-\nabla c_2\Vert_{L^{\frac{pq}{p-q}}})ds\nonumber\\
\lesssim &\int_0^t(t-s)^{-\frac{1}{2}+\frac{1}{p}-\frac{1}{q}}(C_R\Vert n_1-n_2\Vert_{L^p}+C_R\Vert {\bf u}_1-{\bf u}_2\Vert_{L^{\frac{pq}{p-q}}})ds\nonumber\\
&+\Vert \int_0^t e^{-(t-s)A}[{\bf f}({\bf u}_1)-{\bf f}({\bf u}_2)](s)dW_s\Vert_{\frac{pq}{p-q}}\nonumber\\
:=&\int_0^t(t-s)^{-\frac{1}{2}+\frac{1}{p}-\frac{1}{q}}(C_R\Vert n_1-n_2\Vert_{L^p}+C_R\Vert {\bf u}_1-{\bf u}_2\Vert_{L^{\frac{pq}{p-q}}})ds
+\Vert \bar Z\Vert_{L^{\frac{pq}{p-q}}},
\end{align}
where
\begin{align*}
\bar Z(t):=\int_0^t e^{-(t-s)A}[{\bf f}({\bf u}_1)-{\bf f}({\bf u}_2)](s)dW_s.
\end{align*}
In light of assumption (H2), we use BDG's inequality and Young's inequality to get
\begin{align}\label{110newlabelnew11}
&\mathbb{E}\left( \sup_{t\leq T\wedge \tau_R} \|\bar Z(t)\|^{\frac{pq}{p-q}}_{L^{\frac{pq}{p-q}}} \right)
\leq C_{T\wedge \tau_R}[ \mathbb E(\int_0^{T\wedge \tau_R}\sup_{s\leq T\wedge \tau_R}\Vert {\bf u}_1-{\bf u}_2\Vert_{L^{\frac{pq}{p-q}}}^{\frac{pq}{p-q}}(s)ds)],
\end{align}
where $C_{T\wedge \tau_R}>0$ is a constant depending on $T\wedge\tau_R.$
Thus, we combine (\ref{uniquenumber1mar8}), (\ref{110newlabelnew11}) with (\ref{combine1mar8sun}) to get
\begin{align}\label{thusasmar8sun}
&\mathbb E(\sup_{t\leq T\wedge \tau_R}\Vert n_1-n_2\Vert_{L^p}^\frac{pq}{p-q}+\sup_{t\leq T\wedge \tau_R}\Vert {\bf u}_1-{\bf u}_2\Vert^{\frac{pq}{p-q}}_{L^{\frac{pq}{p-q}}})\nonumber\\
\leq &C_{R,T\wedge \tau_R}\int_0^{T\wedge\tau_R}[1+(t-s)^{-\frac{1}{2}+\frac{1}{p}-\frac{1}{q}}]\mathbb E(\sup_{r\le s} \|n_1-n_2\|_{L^p}^{\frac{pq}{p-q}}+\sup_{r\le s}  \|{\bf u}_1-{\bf u}_2\|^\frac{pq}{p-q}_{L^{\frac{pq}{p-q}}})(r)ds,
\end{align}
where  $C_{T\wedge \tau_R}>0$ is a constant depending on $T\wedge\tau_R.$
Define 
\begin{align}
\Theta(T):=\mathbb E\left(\sup_{t\leq T\wedge \tau_R}\Vert n_1-n_2\Vert_{L^p}^\frac{pq}{p-q}+\sup_{t\leq T\wedge \tau_R}\Vert {\bf u}_1-{\bf u}_2\Vert^{\frac{pq}{p-q}}_{L^{\frac{pq}{p-q}}}\right),
\end{align}
then  we rewrite (\ref{thusasmar8sun}) as
\begin{align}
\Theta(T)\lesssim C_{R,T\wedge\tau_R}\int_0^{T\wedge\tau_R}(t-s)^{-\frac{1}{2}+\frac{1}{p}-\frac{1}{q}}\Theta(s)ds+\int_0^{T\wedge \tau_R}\Theta(s)ds.
\end{align}
Then, by Gr\"{o}nwall inequality, one finds
$$\Theta(T)\equiv 0.$$
It follows that $(n_1,{\bf u}_1)=(n_2,{\bf u}_2)$ in $L^p(\mathbb R^2)\times L^{\frac{pq}{p-q}}(\mathbb R^2)$  { for every $t\leq \tau_R$} $\mathbb P$-a.s. {Now let,
$$\tau:= \limsup \tau_R,$$
}
Then, by letting $R\rightarrow\infty$ we obtain that
$$(n_1,{\bf u}_1)=(n_2,{\bf u}_2) \quad\text{ in } L^p(\mathbb R^2)\times L^{\frac{pq}{p-q}}(\mathbb R^2)\quad \text{ for every } t\leq \tau, \quad \mathbb P-a.s.$$

\end{proof}

\begin{theorem}\label{thm:regularity}
The unique local solution $(n_m,{\bf u}_m,\tau_m)$ in the sense of Definition \ref{def:local_solution} additionally satisfies the following regularity result:

For any $m\geq 1$,
\begin{itemize}
\item[(i)]  $n_m \in L^{2}(\Omega;C^1((0,\tau_m); C^2(\mathbb R^2 )))$ and ${\bf u}_m \in L^{2}(\Omega; L(0,\tau_m; W^{2,r}(\mathbb R^2)))$ for $r=2+\epsilon$ with $\epsilon>0$ sufficiently small. 
\item[(ii)]  $n_m(x,t) > 0$ for all $x\in \mathbb{R}^2$ and  $t<\tau_m$ $\mathbb P$-a.s.
\end{itemize}
\end{theorem}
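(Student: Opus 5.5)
Up to the stopping time $\tau_m$ the cut-offs in \eqref{3.1} equal one, and $\|n_m\|_{X_n^t}+\|{\bf u}_m\|_{X_{\bf u}^t}\le m$ holds deterministically for $t<\tau_m$. The plan is to bootstrap regularity pathwise from these bounds, using the mild formulations \eqref{3p1mar}--\eqref{3p2mar} and the semigroup estimates of Lemma \ref{lemma:semigroup-Lp-Lq}. Positivity (ii) will then come from a maximum-principle argument for the linear parabolic equation satisfied by $n_m$.

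\textbf{Step 1: integrability of $n$.} Start from $\sup_{t<\tau_m}\|n_m\|_{L^p}\le m$ with $p=2-\epsilon$, together with $\|{\bf u}_m\|_{L^{4/\epsilon-2}}+\|\nabla{\bf u}_m\|_{L^r}\le m$. Since $r>2$, Sobolev embedding gives ${\bf u}_m\in L^\infty$. Apply \eqref{eq:3.4b} to the drift ${\bf F}=n\nabla c+n{\bf u}$, using \eqref{eq:3.3} for $\nabla c$. This raises the integrability of $n_m$ step by step, $L^p\to L^{p_1}\to\cdots\to L^\infty$ on $[\delta,\tau_m)$, or on $[0,\tau_m)$ since $n_0\in W^{2,\hat p}$ for every $\hat p$. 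Each step increases the exponent by a fixed amount while keeping the time singularity integrable. Mass conservation, obtained by integrating the equation and using $\nabla\cdot{\bf u}=0$, supplies the $L^1$ bound. The second part of Lemma \ref{lemma28} then gives $\nabla c\in L^\infty$, and every bound is a deterministic constant depending on $m$ and $T$.

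\textbf{Step 2: derivatives of ${\bf u}$.} With $n_m$ bounded in $L^1\cap L^\infty$, the forcing $n\nabla c$ lies in every $L^s$. Apply \eqref{eq:3.4afractional}--\eqref{eq:3.4bfractional} with $\beta_1$ close to $1$ to the deterministic integrals in \eqref{3p2mar}. The stochastic convolution is treated with the $A^\alpha$-version \eqref{110newlabelnewdec}, now with $\alpha$ near $1$, combined with (H1) at level $\beta=1$ and the factorization/maximal regularity results of \cite{brzezniak1997stochastic}. This yields $\mathbb E\sup_{t<\tau_m}\|A{\bf u}_m\|_{L^r}^2<\infty$, hence ${\bf u}_m\in L^2(\Omega;L^\infty(0,\tau_m;W^{2,r}))$. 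This step is the main obstacle. The noise has only spatial, not temporal, regularity, so reaching $W^{2,r}$ requires $\alpha=1$ exactly, where the bound \eqref{110newlabelnewdec} is borderline. If the borderline case fails, I would take $\alpha=1-\delta$ and then use the extra smoothing $e^{-(t-s)A}$ against the $L^2$-in-time bound on the gradient term coming from \eqref{110newlabelnewdec}.

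\textbf{Step 3: regularity of $n$.} The $n$-equation is deterministic given ${\bf u}_m$; it is a linear parabolic equation $\partial_t n=\Delta n-{\bf b}\cdot\nabla n+n^2$ with ${\bf b}=\nabla c+{\bf u}$, where $\Delta c=-n$ and $\nabla\cdot{\bf u}=0$. Differentiating the mild formula and using \eqref{eq:3.4a1} iteratively gives $\nabla n\in L^\infty$, then $\nabla^2 c\in L^s$ and H\"older bounds. Next, ${\bf u}\in W^{2,r}\subset C^{1,\gamma}$ with $r>2$ makes ${\bf b}$ H\"older continuous in $x$. Time-H\"older continuity of ${\bf u}$ in $L^\infty$ (from the continuity shown in Theorem \ref{thm3mar8}, quantified via Kolmogorov) then allows parabolic Schauder estimates, giving $n\in C^{1}_tC^2_x$ on $(0,\tau_m)$ pathwise. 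The $L^2(\Omega)$ bound follows because all constants depend only on $m$, $T$, and $\sup\|{\bf u}\|_{W^{2,r}}$, which has finite second moment by Step 2.

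\textbf{Step 4: positivity (ii).} Once $n$ is classical, it solves a linear equation $\partial_t n-\Delta n+{\bf b}\cdot\nabla n-n\,n=0$ whose coefficients ${\bf b}$ and $n$ are bounded. Since $n_0>0$, the strong maximum principle in $\mathbb R^2$ for bounded solutions gives $n_m>0$ for $t<\tau_m$, pathwise and hence $\mathbb P$-a.s. An alternative route is a Feynman--Kac representation with positive kernel.
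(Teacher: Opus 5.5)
Your plan has the same skeleton as the paper's proof: a bootstrap through the mild formulation with Lemma \ref{lemma:semigroup-Lp-Lq}, the It\^o estimate \eqref{110newlabelnewdec} for the stochastic convolution, a classical parabolic theorem at the end, and the maximum principle for (ii). Two of your choices are genuinely different and cleaner.

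First, you use that for $t<\tau_m$ the norms $\|n_m\|_{X_n^t}$ and $\|{\bf u}_m\|_{X_{\bf u}^t}$ are below $m$ pathwise. So the whole $n$-bootstrap has deterministic constants, and only the stochastic convolution needs moment bounds. The paper instead takes expectations at every step and loses a factor of two in the moment each time, going from $\frac{pq}{p-q}$ down to $\frac{pq}{128(p-q)}$; this is why it must impose $\epsilon<\frac{2}{129}$.

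Second, you close with Schauder estimates for the linearized equation and state the regularity of ${\bf b}=\nabla c+{\bf u}$ they require. The paper instead pushes the fractional bootstrap to $A^{\tilde k}n_m\in L^\infty$, $\tilde k<\frac32$, and cites Friedman without discussing the coefficients. Note that the Schauder constants involve the time-H\"older seminorm of ${\bf u}_m$, not only $\sup_t\|{\bf u}_m\|_{W^{2,r}}$. Your $L^2(\Omega)$ bound therefore also needs the Kolmogorov moments.

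As written, however, Step 2 does not give $\sup_t\|A{\bf u}_m\|_{L^r}$, and the real obstacle is deterministic, not stochastic. In \eqref{eq:3.4afractional} with $\beta_1=1$, $\beta_2=0$ the kernel is $(t-s)^{-1}$, which is not integrable. A forcing $n\nabla c-({\bf u}\cdot\nabla){\bf u}$ bounded only in $L^\infty_tL^r_x$ therefore yields $A^{\beta_1}{\bf u}_m$ for $\beta_1<1$ and, in general, nothing more.

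You must first put $A^{\beta_2}$, with $\beta_2>0$, on the forcing, which requires fractional derivatives of $n_m$ and of $\nabla{\bf u}_m$. This is exactly what the paper does in \eqref{130nov12mar8}, using $A^{1/4}n_m$ and $A^{3/4}{\bf u}_m$. In your plan it is a matter of reordering:
\begin{itemize}
\item The bound $\nabla n_m\in L^\infty$ from Step 3 needs only Step 1: write the drift as $(\nabla c+{\bf u})\cdot\nabla n-n^2$ and use \eqref{eq:3.4a1}.
\item Step 3 does not need Step 2 at all, since ${\bf u}_m\in L^{\frac{pq}{p-q}}\cap\dot W^{1,r}$ with $r>2$ is already spatially H\"older.
\item A first pass with $\beta_1\in(\frac12,1)$ handles the convective term.
\end{itemize}
Only after these should you take $\beta_1=1$.

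By contrast, \eqref{110newlabelnewdec} is not borderline at $\alpha=1$. It uses no smoothing of $e^{-tA}$ and holds for $\alpha=1$ under (H1) with $\beta=1$, which is how the paper uses it. Its real defect is circularity: its right-hand side then contains $\int_0^T\|A{\bf u}_m\|_{L^r}^2\,ds$, so it must be closed by Gr\"onwall after a localization that guarantees finiteness. The factorization route you cite avoids this, because it gains $A^{\frac12-}$ uniformly in time. Then (H1) at some $\beta\in(\frac12,1)$, together with the first pass, suffices.

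Your fallback through the dissipative term of \eqref{110newlabelnewdec} controls the extra derivative only in $L^2$ in time. By itself it does not give a supremum-in-time bound.

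Finally, mass conservation yields $\|n_m\|_{L^1}=M$ only once $n_m\ge 0$ is known, and you prove that only in Step 4; the paper has the same circularity. Get $\nabla c\in L^\infty$ instead from $n_m\in L^{p}\cap L^\infty$ with $p<2$, by splitting the kernel $|x|^{-1}$ at $|x|=1$.
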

\begin{proof}
Recall that we constructed $(n_m,\bu_m)|_{[0,\tau_m)}\in S_T$ and thus 
\begin{align}\label{base_estimate:nu}
    &\mathbb E\left(\sup_{t\leq T}\Vert n_m(t\wedge \tau_m)\Vert_{L^p}^{\frac{pq}{p-q}}\right)\leq C_m,\nonumber\\
    &\mathbb E\left(\sup_{t\leq T}\Vert {\bf u}_m(t\wedge \tau_m)\Vert_{L^{\frac{pq}{p-q}}}^{\frac{pq}{p-q}}+\sup_{t\leq T}\Vert \nabla {\bf u}_m(t\wedge \tau_m)\Vert_{L^r}^{\frac{pq}{p-q}}\right)\leq C_m ,
\end{align}
for some constant $C_m>0$ depending on $m.$   

Our next aim is to upgrade the spatial regularity of $(n_m,{\bf u}_m)$.   First, note that  $(n,{\bf u})$ satisfies \eqref{3p1mar} for any $t<\tau$ we have almost surely,  that
\begin{align}\label{massconserve}
    \| n(t) \|_{L^1} = \int_{\mathbb{R}^2} n(x,t) \, dx = \int_{\mathbb{R}^2} e^{t\Delta} n_0(x) \, dx = \int_{\mathbb{R}^2} n_0(x) \, dx.
\end{align}

In other words, we have that the cell mass is conserved for any time almost surely.

{Now, we continue to establish higher regularity, as claimed in Theorem \ref{thm:regularity}, of our maximal solution $(n_m,{\bf u}_m)$. More precisely, we will show the following bounds in the following order. Recall that $p,q,r$ are as defined in \eqref{pqr}. 

{\bf Claims:} First, by using (\ref{base_estimate:nu}), we shall prove that for $p$ and $q$ given in (\ref{pqr}),
\begin{align}\label{estimaten_La}
\mathbb E\left(\sup_{t\leq \tau_m}\|n_m(t)\|_{L^{a}}^\frac{pq}{2(p-q)}\right)\leq C_m,\qquad ~\mathbb E\left(\sup_{t\leq \tau_m}\|{\bf u}_m(t )\|_{L^\infty}^2\right)\leq C_m
\end{align}
for $\frac{2-\epsilon}{1-\epsilon}< a < \frac{2-\epsilon}{\epsilon}$ with $\epsilon>0$ sufficiently small and some positive constant $C$ depending on $m.$

Next, by combining \eqref{estimaten_La} with \eqref{base_estimate:nu} and applying the semigroup estimate given in Lemma \ref{lemma:semigroup-Lp-Lq}, we will prove that
\begin{align}\label{linfestimateof_gradient_n}
\mathbb E\left(\sup_{t\leq \tau_m}\Vert n_m(t )\Vert_{L^{\infty}}^\frac{pq}{4(p-q)}\right)\leq C_m.
\end{align}
Then, using \eqref{linfestimateof_gradient_n} and interpolation inequalities, we will prove that for any $\alpha\in(0,\frac{1}{2})$ and $\bar\alpha\in(\frac{1}{2},1),$
\begin{align}\label{A_alphabaralpha_u_r_estimate}
\mathbb E\left(\sup_{t\leq \tau_m} \Vert A^{\alpha} {\bf u}_m(t)\Vert_{L^{\frac{pq}{p-q}}}^{\frac{pq}{8(p-q)}}\right)\leq C_m,\qquad 
\mathbb E\left(\sup_{t\leq \tau_m}\Vert A^{\bar\alpha}{\bf u}_m(t) \Vert^{\frac{pq}{8(p-q)}}_{L^r}\right)\leq C_m. 
\end{align}
In addition, we will deduce from Lemma \ref{lemma:semigroup-Lp-Lq}, \eqref{linfestimateof_gradient_n} and interpolation, for $\alpha\in (0,\frac12)$ that
\begin{align}\label{Alaphanr_interpolation}
\mathbb E\left(\sup_{t\leq \tau_m}\|A^{\alpha}n_m\|^{\frac{pq}{8(p-q)}}_{L^{l}}\right)\le C_m,\qquad \text{ for } l=p,\infty
\end{align}
The bound given in \eqref{Alaphanr_interpolation}, will then be used to upgrade the bounds \eqref{A_alphabaralpha_u_r_estimate} with the aid of Lemma \ref{lemma:semigroup-Lp-Lq} to obtain that 
\begin{align}\label{laplace_estimate_u}
\mathbb E\left(\sup_{t\leq \tau_m}\Vert A {\bf u}_m(t)\Vert_{L^r}^{\frac{pq}{32(p-q)}}\right)\leq C_m.
\end{align}

Then, by the Sobolev embedding theorem, together with \eqref{laplace_estimate_u} and \eqref{A_alphabaralpha_u_r_estimate}, we will deduce that for $\alpha_1\in(0,1)$,  
\begin{align}\label{Abaralpha_alpha1_u_n_estimate}
\mathbb E\left(\sup_{t\leq \tau_m}\|A^{\alpha_1}{\bf u}_m(t)\|^{\frac{pq}{64(p-q)}}_{L^{\frac{pq}{p-q}}}\right)\leq C_m,\qquad \mathbb E\left(\sup_{t\leq \tau_m}\Vert A^{\alpha_1}n_m(t)\Vert_{L^\infty}^{\frac{pq}{16(p-q)}}\right)\leq C_m.
\end{align}
Finally, by combining (\ref{A_alphabaralpha_u_r_estimate}) and (\ref{Abaralpha_alpha1_u_n_estimate}) we will arrive at the following bounds for any $\tilde k\in (0,\frac{3}2)$: 
$$ \mathbb E\left(\sup_{t\leq \tau_m}\|A^{\tilde k}n_m(t)\|^2_{L^\infty}\right)\leq C_m.$$
} 

We begin with \eqref{estimaten_La}.  In light of (\ref{eq220}) and (\ref{eq221}), using (\ref{eq:3.3}), we have
\begin{align}\label{npestimatenov}
\|n_m(t)\|_{L^a}
&\lesssim\, \|e^{t\Delta}n_m(0)\|_{L^a}
   +  \int_{0}^{t} (t-s)^{-\frac{1}{2} +\frac{1}{a}- \frac{1}{q}} 
   \| n_m(s)\nabla c_m(s) + n_m(s){\bf u}_m(s) \|_{L^q}\, ds \notag \\
& \lesssim\,\|n_m(0)\|_{L^a}
   +   \int_{0}^{t} (t-s)^{-\frac{1}{2}+\frac{1}{a} - \frac{1}{q}} 
   \big( \|n_m(s)\|_{L^p}^2 + \|n_m(s)\|_{L^p}\|{\bf u}_m(s)\|_{L^{\frac{pq}{p-q}}} \big)\, ds,
\end{align}
for $\frac{2-\epsilon}{1-\epsilon}< a < \frac{2-\epsilon}{\epsilon}$ with $\epsilon>0$ sufficiently small.  Thus, 
\begin{align}\label{nmar8bound}
\mathbb E\sup_{t\leq \tau_m}\|n_m(t)\|_{L^{a}}^\frac{pq}{2(p-q)}
&\lesssim \, \sup_{t\leq \tau_m}\|e^{t\Delta}n_m(0)\|^\frac{pq}{2(p-q)}_{L^a}+C\mathbb E
   \big(\sup_{t\leq \tau_m}\|n_m(t)\|^\frac{pq}{p-q}_{L^p}+\sup_{t\leq \tau_m}\|{\bf u}_m(t)\|^\frac{pq}{p-q}_{L^{\frac{pq}{p-q}}} \big)\lesssim C_m,
\end{align}
where we used \eqref{base_estimate:nu}.

By the semigroup estimate (\ref{eq:3.4a})-(\ref{eq:3.4b}), one finds 
\begin{align}\label{npestimatenov}
\|n_m(t)\|_{L^\infty}
&\lesssim \, \|e^{t\Delta}n_m(0)\|_{L^\infty}
   +  \int_{0}^{t} (t-s)^{-\frac{1}{2} - \frac{pq+(p-q)a}{apq}} 
   \| n_m(s)\nabla c_m(s) + n_m(s){\bf u}_m(s) \|_{L^{\frac{apq}{pq+(p-q)a}}}\, ds \notag \\
& \lesssim \,\|n_m(0)\|_{L^\infty}
   + \int_{0}^{t} (t-s)^{-\frac{1}{2}- \frac{pq+(p-q)a}{apq}} 
   \bigg( \|n_m(s)\|_{L^a}\Vert n_m\Vert_{L^p} + \|n_m(s)\|_{L^a}\|{\bf u}_m(s)\|_{L^{\frac{pq}{p-q}}} \bigg)\, ds
   \end{align},
   where $\epsilon\in(0,\frac{1}{2})$ is chosen so that $-\frac{1}{2}-\frac{pq+(p-q)a}{apq}>-1.$
   Thus,
   \begin{align*}
\Vert n_m\Vert_{L^\infty}^\frac{pq}{4(p-q)}   & \lesssim \,\|n_m(0)\|^{\frac{pq}{4(p-q)}}_{L^\infty}
   + C_m\big( \sup_{s\leq \tau_m}\|n_m(s)\|^\frac{pq}{2(p-q)}_{L^a}+\sup_{s\leq \tau_m}\Vert n_m\Vert^\frac{pq}{2(p-q)}_{L^p} +\sup_{s\leq \tau_m}\|{\bf u}_m(s)\|^\frac{pq}{2(p-q)}_{L^{\frac{pq}{p-q}}} \big)  .
\end{align*}
It follows from (\ref{nmar8bound}) that
\begin{align}\label{nlinfboundmar}
\mathbb E\left(\sup_{t\leq T}\Vert n_m(t\wedge \tau_m)\Vert_{L^{\infty}}^\frac{pq}{4(p-q)}\right)\leq C_m,
\end{align}
where constant $C_m>0$ depends on $m$ and by definition we have $\frac{pq}{4(p-q)}=\frac{1}{\epsilon}-\frac12$ for $\epsilon>0$ chosen to be sufficiently small.

Next, we apply interpolation inequality,
\begin{align*}
\Vert {\bf u}_m\Vert^{2}_{L^\infty}\lesssim \Vert\nabla {\bf u}_m\Vert^{2\theta}_{L^r}\Vert {\bf u}_m\Vert^{2(1-\theta)}_{L^{\frac{pq}{p-q}}}\leq\Vert \nabla {\bf u}_m\Vert_{L^r}^r+\Vert {\bf u}_m\Vert^{\frac{2(1-\theta)r}{r-2\theta}}_{L^{\frac{pq}{p-q}}}\leq \Vert \nabla {\bf u}_m\Vert_{L^r}^r+\Vert {\bf u}_m\Vert^{2+\epsilon}_{L^{\frac{pq}{p-q}}},
\end{align*}
for
$$0<\theta=\frac{\frac{p-q}{pq}}{\frac{1}{2}+\frac{p-q}{pq}-\frac{1}{r}}=\frac{2+\epsilon}{4}=\frac{1}{2}+\frac{\epsilon}{4}<1,$$
for $\epsilon>0$ sufficiently small.  
Then it follows from $\mathbb E(\sup_{t\leq \tau_m}\Vert \nabla {\bf u}\Vert^{\frac{pq}{p-q}}_{L^{r}})\leq C_m$  and   \eqref{base_estimate:nu} that
$$\mathbb E\left(\sup_{t\leq T}\|{\bf u}_m(t\wedge\tau_m)\|_{L^\infty}^2\right)\leq C_m,$$
where $C_m>0$ depending on $m$ is a positive constant.

Next we invoke (\ref{eq:3.4afractional}) and (\ref{eq:3.4bfractional}) in Lemma \ref{lemma:semigroup-Lp-Lq} to get, for any $\alpha\in(0,\frac{1}{2}),$ 
\begin{align}\label{130nov12mar}
&\|A^{\alpha} {\bf u}_m(t)\|_{L^{\frac{pq}{p-q}}}
\lesssim\, \|e^{-tA}A^{\alpha} {\bf u}_m(0)\|_{L^{\frac{pq}{p-q}}}
   +  \int_{0}^{t} (t-s)^{-{\alpha}+\frac{1}{q}-\frac{1}{p}-\frac{1}{b} }
   \|n_m(s) \nabla c_m(s) + {\bf u}_m(s)\cdot\nabla  {\bf u}_m(s)\|_{L^b}\, ds \notag\\ 
   &+\Vert \int_0^te^{-(t-s)A}\mathcal PA^{\alpha}{\bf f}({\bf u}_m(s))dW_s\Vert_{L^{\frac{pq}{p-q}}}\nonumber\\
&\lesssim\, \|\nabla {\bf u}_m(0)\|_{L^{\frac{pq}{p-q}}}
   +  \int_{0}^{t} (t-s)^{-{\alpha}+\frac{1}{q}-\frac{1}{p}-\frac{1}{b}}
   \big( \|n_m(s)\|_{L^r}\Vert \nabla c_m(s)\Vert_{L^{\frac{pq}{p-q}}} +\Vert \nabla {\bf u}_m(s)\Vert_{L^r} \|{\bf u}_m(s)\|_{L^{\frac{pq}{p-q}}} \big)\, ds\nonumber\\
   &+\Vert \int_0^te^{-(t-s)A}\mathcal PA^{\alpha}{\bf f}({\bf u}_m(s))dW_s\Vert_{L^{ \frac{pq}{p-q}}},
\end{align}
where 
\begin{align}\label{exponent_b}
\frac{1}{b}=\frac{1}{2+\epsilon}+\frac{\epsilon}{4-2\epsilon}>\frac{1}{2},\text{ for an appropriately small }\epsilon>0.
\end{align}
Note that for $r=2+\epsilon,$ we have the interpolation inequality $\Vert n_m\Vert_{L^r}\lesssim \Vert n_m\Vert_{L^\infty}^{\frac{r-1}{r}}\Vert n_m\Vert^r_{L^1}$. Recall also that $\Vert n_m\Vert_{L^1}=\Vert n_m(0)\Vert_{L^1}$ is shown in \eqref{massconserve}. We thus obtain,
\begin{align}\label{nrboundestimate}
\Vert n_m\Vert^{\frac{pq}{4(p-q)}}_{L^r}\lesssim \Vert n_m\Vert_{L^\infty}^{\frac{(r-1)pq}{4r(p-q)}}.
\end{align}
Hence, for (\ref{130nov12mar}) we obtain for some constant $C_m$ that
\begin{align}\label{regularity_Aalphaum_local}
\Vert A^{\alpha} {\bf u}_m\Vert_{L^{\frac{pq}{p-q}}}^{\frac{pq}{8(p-q)}}\lesssim & C_m(\Vert n_m\Vert^{{\frac{pq}{4(p-q)}}}_{L^r}+\Vert n_m\Vert^{\frac{pq}{4(p-q)}}_{L^p})+C_m[||\nabla {\bf u}_m\Vert^{{\frac{pq}{4(p-q)}}}_{L^r}+\Vert {\bf u}_m\Vert^{\frac{pq}{4(p-q)}}_{L^{\frac{pq}{p-q}}}]\nonumber\\
&+\Vert \int_0^te^{-(t-s)A}\mathcal PA^{\alpha}{\bf f}({\bf u}_m(s))dW_s\Vert_{L^{\frac{pq}{p-q}}}^{\frac{pq}{8(p-q)}}.
\end{align}
We apply $\sup_{t\leq\tau_m}$ and then expectation on both sides of the inequality above.  By choosing $\bar q=\frac{pq}{2(p-q)}$ and $2\bar p\bar q=\frac{pq}{8(p-q)}$ in \eqref{110newlabelnewdec}, we have the estimate of the last term in (\ref{regularity_Aalphaum_local}).  Combining this with (\ref{nlinfboundmar}) and (\ref{nrboundestimate}), we further obtain, {for any $\alpha\in(0,\frac{1}{2}),$ and $p,q$ satisfying \eqref{pqr}-\eqref{quotient}} that,
\begin{align}\label{alpha1emar8}
\mathbb E\left(\sup_{t\leq \tau_m} \Vert A^{\alpha} {\bf u}_m(t)\Vert_{L^{\frac{pq}{p-q}}}^{\frac{pq}{8(p-q)}}\right)\leq C_m,
\end{align}
for some constant $C_m>0$ depending on $m.$

Next, for $\bar\alpha\in(1/2,1),$  we consider
\begin{align}\label{130nov12marnow}
\|A^{\bar\alpha}{\bf u}_m(t)\|_{L^r}
&\lesssim \, \|e^{-tA}A^{\bar\alpha}{\bf u}_m(0)\|_{L^r}
   +  \int_{0}^{t} (t-s)^{-{\bar\alpha} +\frac{1}{r} - \frac{1}{b}}
   \|n_m(s) \nabla c_m(s) + {\bf u}_m(s)\cdot\nabla  {\bf u}_m(s)\|_{L^b}\, ds \notag\\ 
   &\quad +\Vert \int_0^te^{-(t-s)A}\mathcal PA^{\bar\alpha}{\bf f}({\bf u}_m(s))dW_s\Vert_{r}\nonumber\\
&\lesssim \, \|A^{\bar\alpha}{\bf u}_m(0)\|_{L^r}
   +  \int_{0}^{t} (t-s)^{-{\bar\alpha} +\frac{1}{r} - \frac{1}{b}}
   \big( \|n_m(s)\|_{L^r}\|n_m(s)\|_{p} +\Vert \nabla {\bf u}_m(s)\Vert_{r} \|{\bf u}_m(s)\|_{\frac{pq}{p-q}} \big)\, ds\nonumber\\
   &\quad +\Vert \int_0^te^{-(t-s)A}\mathcal PA^{\bar\alpha}{\bf f}({\bf u}_m(s))dW_s\Vert_{L^r}.
\end{align}
By choosing $\bar q=\frac{r}{2}$ and $2\bar p\bar q=\frac{pq}{8(p-q)}$ in \eqref{110newlabelnewdec}, we obtain from (\ref{nlinfboundmar}), (\ref{nrboundestimate}) and (\ref{130nov12marnow}), {for any $\bar\alpha\in(\frac{1}{2},1),$ and $r$ given in \eqref{pqr}} that 
\begin{align}\label{Aulrmar8}
\mathbb E\left(\sup_{t\leq \tau_m}\Vert A^{\bar\alpha}{\bf u}_m(t) \Vert^{\frac{pq}{8(p-q)}}_{L^r}\right)\lesssim C_m,
\end{align}
where $C_m>0$ depends on $m$.

 To upgrade the bounds above and find estimates for $A{\bf u}$, we first estimate $n$. We consider for { $k =p, \infty$} and $\alpha\in(0,\frac{1}{2}),$ 
\begin{align}
&\sup_{t\leq \tau_m}\|A^{\alpha} n_m(t)\|^\frac{pq}{8(p-q)}_{L^k}
\lesssim  \Vert A^{\alpha}n_m(0)\Vert_{L^{k}}^\frac{pq}{8(p-q)}\nonumber\\
+ &\sup_{t\leq \tau_m}\bigg(\int_{0}^{t} (t-s)^{-{\alpha}-\frac{1}{2} - \frac{1}{q}+\frac{1}{p}} 
   \big( \|n_m(s)\|_{L^k}\Vert\nabla c_m\Vert_{L^{\frac{pq}{p-q}}} + \|n_m(s)\|_{L^k}\|{\bf u}_m(s)\|_{L^{\frac{pq}{p-q}}} \big)\, ds\bigg)^\frac{pq}{8(p-q)}\nonumber\\
\lesssim&\Vert A^{\alpha}n_m(0)\Vert^\frac{pq}{8(p-q)}_{L^k}\nonumber\\
& +
 \sup_{t\leq \tau_m}\left(\|n_m(s)\|_{L^k}\|n_m(s)\|_{L^p} + \|n_m(s)\|_{L^p}\|{\bf u}_m(s)\|_{L^{\frac{pq}{p-q}}}\right)^\frac{pq}{8(p-q)}\bigg(\int_0^t (t-s)^{-{\alpha}-\frac{1}{2} - \frac{1}{q}+\frac{1}{p}} \, ds\bigg)^\frac{pq}{8(p-q)}\nonumber\\
 \lesssim &\Vert A^{\alpha}n_m(0)\Vert^\frac{pq}{8(p-q)}_{L^k}+C_m\left(\sup_{t\leq \tau_m}\Vert n_m(t)\Vert_{L^{k}}^{\frac{pq}{4(p-q)}}+\sup_{t\leq \tau_m}\Vert n_m(t)\Vert_{L^p}^{\frac{pq}{4(p-q)}}\right)\nonumber\\
 &+C_m\left(\sup_{t\leq \tau_m}\Vert n_m(t)\Vert^{\frac{pq}{4(p-q)}}_{L^p}
 +\sup_{t\leq \tau_m}\Vert {\bf u}_m(t)\Vert^{\frac{pq}{4(p-q)}}_{L^{\frac{pq}{p-q}}}\right).
\end{align}
Then it follows from \eqref{base_estimate:nu} that, for {any $\alpha\in(0,\frac12)$, we have}
\begin{align}\label{Aalnlpmar8}
   \mathbb E\left( \sup_{t\leq \tau_m}\|A^{\alpha} n_m(t)\|^\frac{pq}{8(p-q)}_{L^k} \right)
\lesssim C_m,~k=p,\infty.
\end{align}
We observe an immediate consequence of the bound above by using the following interpolation inequality:
\begin{align*}
\|A^{\alpha}n_m\|^{\frac{pq}{16(p-q)}}_{L^{r}}
\le
\|A^{\alpha}n_m\|_{L^\infty}^{\big(1-\frac{p}{r}\big)\frac{pq}{16(p-q)}}
\|A^{\alpha}n_m\|_{L^{p}}^{\frac{p^2q}{16r(p-q)}},
\end{align*}
In other words, for some constant $C_m>0$ depending on $m$, we have for any $\alpha\in(0,\frac12)$ that
\begin{align*}
\mathbb E\left(\sup_{t\leq \tau_m}\|A^{\alpha}n_m\|^{\frac{pq}{16(p-q)}}_{L^{r}}\right)
\le
C_m.
\end{align*}

Finally, for $A\bu$ we find the following estimate. 
We apply the semigroup estimates (\ref{eq:3.4afractional}) and (\ref{eq:3.4bfractional}) to write
\begin{align}\label{130nov12mar8}
\|A{\bf u}_m(t)\|_{L^r}
\lesssim& \, \|A{\bf u}_m(0)\|_{L^r}
   +  \int_{0}^{t} (t-s)^{-{\frac34} +\frac{1}{r} - \frac{1}{b}}
   \big( \|A^{\frac14}n_m(s)\|_{L^r}\|n_m(s)\|_{L^p}+\|n_m(s)\|_{L^r}\|A^{\frac14}n_m(s)\|_{L^p} \nonumber\\
  & +\Vert \nabla A^{\frac14}{\bf u}_m(s)\Vert_{L^r} \|{\bf u}_m(s)\|_{L^{\frac{pq}{p-q}}}+\Vert \nabla {\bf u}_m(s)\Vert_{L^r} \|A^{\frac14}{\bf u}_m(s)\|_{L^{\frac{pq}{p-q}}} \big)\, ds\nonumber\\
   &+\Vert \int_0^te^{-(t-s) A}\mathcal PA{\bf f}({\bf u}_m(s))dW_s\Vert_{L^r},
\end{align}
where $b$ is given in (\ref{exponent_b}).   Choosing $\bar q=\frac{r}{2}$ and $2\bar p\bar q=\frac{pq}{32(p-q)}$ in \eqref{110newlabelnewdec}, we take $\alpha=\frac14$ in (\ref{alpha1emar8}) and (\ref{Aalnlpmar8})   and $ \bar\alpha=\frac34$ in (\ref{Aulrmar8}), then combine them with (\ref{130nov12mar8}) to obtain, for $r$ defined in \eqref{pqr}, that
\begin{align}\label{uritself}
\mathbb E\left(\sup_{t\leq \tau_m}\Vert A {\bf u}_m(t)\Vert_{L^r}^{\frac{pq}{32(p-q)}}\right)\leq C_m,
\end{align}
where $C_m>0$ is a constant depending on $m.$  Moreover, we use Sobolev embedding theorem and the estimate (\ref{uritself}) to obtain
\begin{align}\label{gradentlinfbound}
\mathbb E(\sup_{t\leq \tau_m}\Vert \nabla {\bf u}_m\Vert_{L^\infty}^{\frac{pq}{32(p-q)}})\leq C_m,
\end{align}
for some constant $C_m>0.$  Then, we consider for $\bar\alpha\in(1/2,1),$
\begin{align}\label{130nov12marnow}
\|A^{\bar\alpha}{\bf u}_m(t)\|_{L^{\frac{pq}{p-q}}}
&\lesssim \, \|e^{-tA}A^{\bar\alpha}{\bf u}_m(0)\|_{L^{\frac{pq}{p-q}}}
   +  \int_{0}^{t} (t-s)^{-{\bar\alpha}}
   \|n_m(s) \nabla c_m(s) + {\bf u}_m(s)\cdot\nabla  {\bf u}_m(s)\|_{L^{\frac{pq}{p-q}}}\, ds \notag\\ 
   &\quad +\Vert \int_0^te^{-(t-s)A}\mathcal PA^{\bar\alpha}{\bf f}({\bf u}_m(s))dW_s\Vert_{L^{\frac{pq}{p-q}}}\nonumber\\
&\lesssim \, \|A^{\bar\alpha}{\bf u}_m(0)\|_{L^{\frac{pq}{p-q}}}
   +  \int_{0}^{t} (t-s)^{-{\bar\alpha} }
   \big( \|n_m(s)\|_{L^\infty}\|n_m(s)\|_{L^p} +\Vert \nabla {\bf u}_m(s)\Vert_{L^{\infty}} \|{\bf u}_m(s)\|_{L^\frac{pq}{p-q}} \big)\, ds\nonumber\\
   &\quad +\Vert \int_0^te^{-(t-s)A}\mathcal PA^{\bar\alpha}{\bf f}({\bf u}_m(s))dW_s\Vert_{L^\frac{pq}{p-q}}.
\end{align}
{By choosing $\bar q=\frac{pq}{2(p-q)}$ and $2\bar p\bar q=\frac{pq}{64(p-q)}$ in \eqref{110newlabelnewdec} and using \eqref{nlinfboundmar}, \eqref{base_estimate:nu} and \eqref{gradentlinfbound}}, we have for any $\bar\alpha\in(\frac{1}{2},1),$
\begin{align}\label{Abaralphaestimatelower}
\mathbb E(\sup_{t\leq T}\|A^{\bar\alpha}{\bf u}_m(t\wedge\tau_m )\|^{\frac{pq}{64(p-q)}}_{L^{\frac{pq}{p-q}}})\leq C_m,
\end{align}
where $C_m$ is a positive constant depending on $m.$

Next, using this higher regularity of $\bu$, we find higher order estimates for $n$. Firstly, for any $\alpha_1\in(0,1)$ we write $\alpha_1=k+\alpha$ for some $k,\alpha \in(0,1/2),$ and observe 
\begin{align}\label{365mar8}
&\sup_{t\leq \tau_m}\|A^{ \alpha_1} n_m(t)\|^\frac{pq}{16(p-q)}_{L^\infty }\lesssim  \Vert A^{k+\alpha}n_m(0)\Vert_{L^\infty}^\frac{pq}{16(p-q)}+ \sup_{t\leq \tau_m}
\bigg[\int_{0}^{t} (t-s)^{-{k} - \frac{1}{q}+\frac{1}{p}} 
   \bigg( \|A^{\alpha} n_m(s)\|_{L^\infty}\Vert\nabla c_m\Vert_{L^{\frac{pq}{p-q}}}\nonumber\\
   &+\Vert  n_m(s)\|_{L^\infty}\Vert A^{\alpha}\nabla c_m\Vert_{L^{\frac{pq}{p-q}}}
   + \|A^{\alpha}n_m(s)\|_{L^\infty}\|{\bf u}_m(s)\|_{L^{\frac{pq}{p-q}}}+\|n_m(s)\|_{L^\infty}\|A^{\alpha} {\bf u}_m(s)\|_{L^{\frac{pq}{p-q}}} \bigg)\, ds\bigg]^\frac{pq}{16(p-q)}\nonumber\\
\lesssim&\Vert A^{k+\alpha}n_m(0)\Vert^\frac{pq}{16(p-q)}_{L^\infty}
 +\sup_{t\leq \tau_m}\bigg[ \|A^{\alpha} n_m(s)\|_{L^\infty}\|n_m(s)\|_{L^p}+ \|n_m(s)\|_{L^\infty}\|A^{\alpha} n_m(s)\|_{L^p}\nonumber\\
 &+ \|A^{\alpha}n_m(s)\|_{L^\infty}\|{\bf u}_m(s)\|_{L^{\frac{pq}{p-q}}}+ \|n_m(s)\|_{L^\infty}\|A^{\alpha}{\bf u}_m(s)\|_{L^{\frac{qp}{p-q}}}\bigg]^\frac{pq}{16(p-q)}\bigg(\int_0^t (t-s)^{-{k} - \frac{1}{q}+\frac{1}{p}} \, ds\bigg)^\frac{pq}{16(p-q)}\nonumber\\
 \lesssim &\Vert A^{k+\alpha}n_m(0)\Vert^\frac{pq}{16(p-q)}_{L^\infty}+C_m\left(\sup_{t\leq \tau_m}\Vert A^{\alpha}n_m(t)\Vert_{L^\infty}^{\frac{pq}{8(p-q)}}+\sup_{t\leq \tau_m}\Vert n_m(t)\Vert_{L^p}^{\frac{pq}{8(p-q)}}\right)\nonumber\\
 &+C_m\left(\sup_{t\leq \tau_m}\Vert n_m(t)\Vert_{L^\infty}^{\frac{pq}{8(p-q)}}+\sup_{t\leq \tau_m}\Vert A^{\alpha}n_m(t)\Vert_{L^p}^{\frac{pq}{8(p-q)}}\right)\nonumber\\
 &+C_m\left(\sup_{t\leq \tau_m}\Vert A^{\alpha}n_m(t)\Vert^{\frac{pq}{8(p-q)}}_{L^\infty}
 +\sup_{t\leq \tau_m}\Vert {\bf u}_m(t)\Vert^{\frac{pq}{8(p-q)}}_{L^{\frac{pq}{p-q}}}+\sup_{t\leq \tau_m}\Vert n_m(t)\Vert^{\frac{pq}{8(p-q)}}_{L^\infty}
 +\sup_{t\leq \tau_m}\Vert A^{\alpha}{\bf u}_m(t)\Vert^{\frac{pq}{8(p-q)}}_{L^{\frac{pq}{p-q}}}\right).
\end{align}
Since ${\bf{u}}_m\in L^{\frac{pq}{p-q}}(\Omega;L^\infty((0,\tau_m);L^{\frac{pq}{p-q}}(\mathbb R^2)))$, by using  (\ref{130nov12mar}), (\ref{alpha1emar8}) and (\ref{365mar8}), we find for any $\alpha_1\in (0,1)$ that
\begin{align}\label{Ahalfnestimatmar8}
    \mathbb E\left(\sup_{t\leq \tau_m}\Vert A^{\alpha_1}n_m(t)\Vert_{L^\infty}^{\frac{pq}{16(p-q)}}\right)\lesssim C_m
\end{align}
for some positive constant $C_m$ depending on $m.$

Next, we will bootstrap the estimate above to obtain the following higher order bounds.   

For any $\tilde k\in (0,\frac32)$ we write $\tilde k = k+\alpha_1$ for some $k\in (0,\frac12)$ and $\alpha_1\in (0,1),$ then apply  Lemma \ref{lemma:semigroup-Lp-Lq} to get
	\begin{align}\label{3p80highorder_regularity_An}
		\sup_{t\leq \tau_m}&\|A^{\tilde k} n_m(t)\|^\frac{pq}{128(p-q)}_{L^\infty} \lesssim \Vert A^{k+\alpha_1}n_m(0)\Vert_{L^\infty}^\frac{pq}{128(p-q)}\nonumber\\
		&+ \sup_{t\leq \tau_m}\bigg(\int_{0}^{t} (t-s)^{-{k} - \frac{1}{q}+\frac{1}{p}} 
		\big( \|A^{\alpha_1} n_m(s)\|_{L^\infty}\Vert\nabla c_m\Vert_{L^{\frac{pq}{p-q}}}+\Vert  n_m(s)\|_{L^\infty}\Vert A^{\alpha_1}\nabla c_m\Vert_{L^{\frac{pq}{p-q}}}\nonumber\\
		& + \|A^{\alpha_1}n_m(s)\|_{L^\infty}\|{\bf u}_m(s)\|_{L^{\frac{pq}{p-q}}}+\|n_m(s)\|_{L^\infty}\|A^{\alpha_1} {\bf u}_m(s)\|_{L^{\frac{pq}{p-q}}} \big)\, ds\bigg)^\frac{pq}{128(p-q)}\nonumber\\
		\lesssim&\Vert A^{\alpha_1}n_m(0)\Vert^\frac{pq}{128(p-q)}_{L^\infty}
		+[ \sup_{t\leq \tau_m}(\|A^{\alpha_1} n_m(s)\|_{L^\infty}\|n_m(s)\|_{L^p})+ \sup_{t\leq \tau_m}(\|n_m(s)\|_{L^\infty}\|A^{\alpha_1} n_m(s)\|_{L^p})\nonumber\\
		&+ \sup_{t\leq \tau_m}(\|A^{\alpha_1}n_m(s)\|_{L^\infty}\|{\bf u}_m(s)\|_{L^{\frac{pq}{p-q}}})\nonumber\\
		&+ \sup_{t\leq \tau_m}(\|n_m(s)\|_{L^\infty}\|A^{\alpha_1}{\bf u}_m(s)\|_{L^{\frac{qp}{p-q}}})]^\frac{pq}{128(p-q)}\bigg(\int_0^t (t-s)^{-{k} - \frac{1}{q}+\frac{1}{p}} \, ds\bigg)^\frac{pq}{128(p-q)}\nonumber\\
		\lesssim &\Vert A^{\alpha_1}n_m(0)\Vert^\frac{pq}{128(p-q)}_{L^\infty}+C_m\sup_{t\leq \tau_m}(\Vert A^{\alpha_1}n_m(t)\Vert_{L^\infty}^{\frac{pq}{64(p-q)}}+\Vert n_m(t)\Vert_{L^p}^{\frac{pq}{64(p-q)}})\nonumber\\
		&+C_m\sup_{t\leq \tau_m}(\Vert n_m(t)\Vert_{L^\infty}^{\frac{pq}{64(p-q)}}+\Vert A^{\alpha_1}n_m(t)\Vert_{L^p}^{\frac{pq}{64(p-q)}})\nonumber\\
		&+C_m\sup_{t\leq \tau_m}\left(\Vert A^{\alpha_1}n_m(t)\Vert^{\frac{pq}{64(p-q)}}_{L^\infty}
		+\Vert {\bf u}_m(t)\Vert^{\frac{pq}{64(p-q)}}_{L^{\frac{pq}{p-q}}}+\Vert n_m(t)\Vert^{\frac{pq}{64(p-q)}}_{L^\infty}
		+\Vert A^{\alpha_1}{\bf u}_m(t)\Vert^{\frac{pq}{64(p-q)}}_{L^{\frac{pq}{p-q}}}\right),
	\end{align}
    where we picked $\alpha_1=\bar \alpha$ in (\ref{Abaralphaestimatelower}).  Here, we choose $\epsilon\in(0,\frac{2}{129})$ so that $\frac{pq}{128(p-q)}>2$.  
Hence, we deduce from \eqref{Ahalfnestimatmar8}, \eqref{3p80highorder_regularity_An}, (\ref{Aalnlpmar8}) and (\ref{Abaralphaestimatelower}) that for any $\tilde k\in(0,\frac{3}{2})$,
\begin{align}
    \label{anMAR8end}
    \mathbb E\left(\sup_{t\leq T}\|A^{\tilde k}n_m(t\wedge\tau_m)\|^2_{L^\infty}\right)\lesssim C_m,
\end{align}
where constant $C_m>0$ depending on $m.$

In summary, we have from (\ref{nlinfboundmar}), (\ref{Ahalfnestimatmar8}) and (\ref{anMAR8end}) that $n_m\in L^2(\Omega; L^\infty((0,\tau_m);W^{2,\infty}(\mathbb R^2))). $  Then, by the parabolic estimate (See Theorems 11 and 16 in Chapter 1 of \cite{friedman2008partial}), we obtain $n_m$ satisfies
$$n_m\in L^2(\Omega;C^1((0,\tau_m);C^2(\mathbb R^2))).$$
Additionally, we have from (\ref{uritself}) and ${\bf u}_m\in L^{\frac{pq}{p-q}}(\Omega;L^\infty(0,\tau_m),\dot{W}^{1,r}(\mathbb R^2)))$ that 
\begin{align}
    {\bf u}_m\in L^2(\Omega; L^\infty((0,\tau_m); W^{2,r}(\mathbb R^2))),
\end{align}
where by definition \eqref{pqr}, we have $r=2+\epsilon$ for $\epsilon>0$ chosen to be sufficiently small.  This completes the proof of statement (i) of Theorem \ref{thm:regularity}.

Next, statement (ii) of Theorem \ref{thm:regularity} is obtained by a standard application of the maximum principle. In other words, we obtain strict positivity of the cell density $n>0$ for any $x\in\mathbb R^2$ and $t<\tau$ $\mathbb P$-a.s.    This finishes the proof of Theorem \ref{thm:regularity}.

\end{proof}

\section{Global-in-time Existence}\label{sect4}
In this section, we are concerned with the global-in-time well-posedness of (\ref{PKSNS-time-dependent}) under the subcritical cellular mass.  For that purpose, we first discuss the existence of the maximal local mild solution of (\ref{PKSNS-time-dependent}) in the sense of Definition \ref{def:local_solution}.
Recall that for stopping times $\{\tau_m,\, m \in \mathbb{N}\}$ defined in (\ref{eq:tau_m}), we constructed local mild solution $(n_m,  {\bf u}_m,\tau_m)$  in \eqref{localsol} such that
$$(n_m,\bu_m)|_{(0,\tau_m)}\in S_T.$$

Note that $\tau_m \le \tau_{m+1}$ almost surely. Hence, thanks to the uniqueness result proved in Theorem \ref{thm:unique}, we infer that
\[
(n_{m+1}, {\bf u}_{m+1}) = (n_m,{\bf u}_m) \quad \text{on } [0, \tau_m).
\]
Now we let,
\begin{align}\label{def:tau}
    \tau := \lim_{m \to \infty} \tau_m,
\end{align}
and define $(n, {\bf u})$ on $[0, \tau)$ by
\[
(n,  {\bf u}) = (n_m, {\bf u}_m) \quad \text{on } t \in [0, \tau_m).
\]
Since
\[
\|n_m\|_{X_n^{\tau_m}}  \vee \|u_m\|_{X_{\bf u}^{\tau_m}} \ge m 
\quad \text{on } \{\omega: \tau < \infty\},
\]
we have
\begin{align}\label{blowup_criterion}
\limsup_{t\to\tau} \sup_{s\in(0,t)} \Big(\|n(s)\|_{L^p} + \|\bu(s)\|_{L^{\frac{pq}{p-q}}}+\|\nabla\bu(s)\|_{L^r}\Big)=\infty,\quad\bP-a.s.
\end{align}
Therefore, $(n, {{\bf  u}}, \tau)$ is a maximal local solution of system (\ref{PKSNS-time-dependent}). 
Hence, our goal of extending this maximal solution for all times means that we must show that  $\tau=\infty$ almost surely. To show this, we will prove boundedness of the norms above.

First we want to derive an entropy estimate for the positive part of the entropy.
{However, due to the unboundedness of the domain, traditional estimates of free energy applied in the current set up do not provide us the boundedness of the desired entity.}  
For that purpose, we introduce a modification of the free energy $\mathcal{E}_{\Gamma}$ of \eqref{freeenergy}, in the spirit of \cite{SimingHe}, for estimating cellular density $n$ as follows:
\begin{equation}\label{modified}
\mathcal{E}_{\Gamma}[n,{\bf u}]
=\int_{\mathbb R^2} \left( n\,\Gamma(n)-\frac{nc}{2}+\frac{|{\bf u}|^{2}}{2} \right)\,dx ,
\end{equation}
where, for appropriately small constants $\eta,\delta>0$, the function $\Gamma$ is defined as
\begin{equation}\label{gamman}
\Gamma(n)=
\begin{cases}
\ln n, & n\ge \eta,\\[0.4em]
\ln  \eta+\eta^{-1}(n-\eta)-\dfrac{\eta^{-2}}{2}(n-\eta)^{2}, & n<\eta,
\end{cases}
\qquad
\eta:=\eta(\delta,M)=\min\left\{1,\dfrac{\delta}{M}\right\}.
\end{equation}

The function $\Gamma$ is chosen so that it coincides with $\ln n$ for large values of $n$,
while remaining bounded from below when $n$ is small.
More precisely, for $n<\eta$ we replace $\ln(\eta+(n-\eta))$ by its second-order Taylor expansion
around $\eta$, and for $n\ge \eta$, we retain the original logarithmic function.  We obtain the following estimates for $\mathcal E_{\Gamma}$ and the positive part of entropy.
\begin{lemma}\label{lem:modified_energy_growth}
Assume that $(n,{\bf u},\tau)$ is the maximal local mild solution of (\ref{PKSNS-time-dependent}) as obtained above,  where initial data $(n_0,{\bf u}_0)$ satisfies (\ref{regularic}).  If the initial cell mass satisfies the subcritical mass condition:
\begin{align}\label{subcriticalmass_condition}
\int_{\mathbb R^2} n_0 dx<8\pi. 
\end{align}
Then, the  modified free energy $\mathcal E_{\Gamma}[n,\bf {u}]$ defined in  (\ref{modified}) satisfies the following estimate for any $k\geq 1$:
\begin{align}\label{estimateemodifytu}
 &\mathbb E\Big[\sup_{t\leq T{  \wedge \tau}}|\mathcal E_{\Gamma}(t)|^k\Big]+ \mathbb E\bigg(\int_0^{T{  \wedge \tau}} \bar{\mathcal G}(r) \, dr\bigg)^k \nonumber\\
 \lesssim &e^{C_kT^k} 
\left(
|\mathcal E_{\Gamma}(0)|^k+\bigg[(\frac{3}{2}-\ln\eta)^kM^k+\frac{C^p(M)M^k}{(8\pi)^k}+C_k+\delta^k\bigg](T^k+T)
\right), \quad \forall\, 0<t<T,
\end{align}
where $C_k>0$ is a constant depending on $k$, $\mathcal E_{\Gamma}(0)=\mathcal E_{\Gamma}(n_0,{\bf u}_0)$ and
$$\bar{\mathcal G}_{\Gamma}:=\int_{\{n\ge\eta\}} n|\nabla\ln n-\nabla c|^{2}\,dxdt+\Vert \nabla {\bf u}\Vert_{L^2}^2dt
     +\frac{2}{3}\int_{\{n<\eta\}} (4\eta^{-1}-3\eta^{-2}n)|\nabla n|^{2}\,dxdt.$$
Furthermore,  for any $\delta>0$ there exists a constant
$C(\mathcal{E}_\Gamma[n_0,{\bf u}_0],M,\delta)$ such that
\begin{align}\label{eq218jan}
\mathbb E\Big[\sup_{t\leq T{  \wedge \tau}}\big( &\Vert n(t) \ln^+ n(t)\Vert^k_{L^1}+\|{\bf u}(t)\|_{L^{2}}^{2k}\big)\Big]\nonumber\\
&\lesssim  e^{C_kT^k}
\left(
|\mathcal E_{\Gamma}(0)|^k+\bigg[(\frac{3}{2}-\ln\eta)^kM^k+\frac{C^k(M)M^k}{(8\pi)^k}+C_k+\delta^k\bigg](T^k+T)
\right)\nonumber\\
&+C_k\bigg[M^k\ln\eta(\delta,M)^{-k}
     +\bigg(\frac{3}{2}M\bigg)^k
     +C^k(M)\bigg(\frac{M}{8\pi}\bigg)^k\bigg],
\end{align}
where $C_k>0$ is a constant and constant $C(M)>0$ depends on $M.$

\end{lemma}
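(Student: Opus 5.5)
We apply It\^o's formula to $\mathcal E_\Gamma[n,{\bf u}]$ along the maximal local solution, up to the localizing times $\tau_m$ of \eqref{eq:tau_m}. There Theorem \ref{thm:regularity} gives enough smoothness and positivity of $n$ to justify all integrations by parts. The $n$-part and the $-\frac12\int nc$ part are driven only by $dt$-terms, since the $n$-equation has no noise.

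\textbf{Cell part.} Using $\partial_t n=\Delta n-\nabla\cdot(n\nabla c)-{\bf u}\cdot\nabla n$, $-\Delta c=n$ and the symmetry of the Newtonian kernel, we get
\begin{align*}
\frac{d}{dt}\int\Big(n\Gamma(n)-\frac{nc}{2}\Big)dx=\int (\Gamma(n)+n\Gamma'(n)-c)\,\partial_t n\,dx .
\end{align*}
On $\{n\ge\eta\}$ the factor $\Gamma+n\Gamma'$ equals $\ln n+1$. This produces $-\int_{\{n\ge\eta\}}n|\nabla\ln n-\nabla c|^2$. On $\{n<\eta\}$ we have $(n\Gamma)''=2\Gamma'+n\Gamma''=2\eta^{-1}-3\eta^{-2}n+\dots$, which yields the dissipation $\frac23\int_{\{n<\eta\}}(4\eta^{-1}-3\eta^{-2}n)|\nabla n|^2$ plus cross terms $\int_{\{n<\eta\}}(\dots)\nabla n\cdot\nabla c$. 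We integrate these cross terms by parts back onto $\Delta c=-n$, and bound them by $C\eta^{-1}\int_{\{n<\eta\}}n^2\le C\eta\, M\lesssim\delta$ using $\eta\le\delta/M$. The transport term $-\int(\Gamma+n\Gamma'-c){\bf u}\cdot\nabla n$ reduces, by $\nabla\cdot{\bf u}=0$, to $\int n\,{\bf u}\cdot\nabla c$. This cancels exactly against the fluid forcing term.

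\textbf{Fluid part.} By It\^o's formula for $\frac12\|{\bf u}\|_{L^2}^2$,
\begin{align*}
d\tfrac12\|{\bf u}\|_{L^2}^2=\Big(-\|\nabla{\bf u}\|_{L^2}^2+\int n\nabla c\cdot{\bf u}\,dx+\tfrac12\|\mathcal P{\bf f}({\bf u})\|^2_{\gamma(U;L^2)}\Big)dt+\langle{\bf u},\mathcal P{\bf f}({\bf u})dW\rangle .
\end{align*}
This holds with the sign convention that makes the coupling cancel. It gives $d\mathcal E_\Gamma+\bar{\mathcal G}_\Gamma\,dt\le (C\delta+K(1+\|{\bf u}\|_{L^2}^2))dt+dM_t$.

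\textbf{Moments and lower bound.} We raise to the power $k$, take $\sup_{t\le T\wedge\tau_m}$ and expectations, and apply BDG to the martingale. We use (H1) to bound $(\int_0^t\|{\bf u}\|_{L^2}^2(1+\|{\bf u}\|_{L^2}^2)ds)^{k/2}\le\varepsilon\sup\|{\bf u}\|_{L^2}^{2k}+C(\int_0^t(1+\|{\bf u}\|^2_{L^2}))^k$. We then close the estimate by Gronwall, which produces the factor $e^{C_kT^k}$.

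The main obstacle is that $\mathcal E_\Gamma$ is not coercive, so the $\|{\bf u}\|_{L^2}^2$ on the right must be controlled by $\mathcal E_\Gamma$ itself. Here we use the log-HLS Lemma \ref{logHLSlemma}: $\frac12\int nc\le\frac{M}{8\pi}\int n\ln n+C(M)$. We write $\int n\ln n\le\int n\Gamma(n)+\int_{\{n<\eta\}}n(\ln\eta+\frac32-\dots)$, which gives $\int n\ln n\le \int n\Gamma+(\frac32-\ln\eta)M$, together with $n\ln n\le n\Gamma$ on small sets. It follows that
\begin{align*}
\mathcal E_\Gamma\ge\Big(1-\frac{M}{8\pi}\Big)\int n\ln^+ n-C\Big[(\tfrac32-\ln\eta)M+\int n\ln^-n\Big]-C(M)+\tfrac12\|{\bf u}\|_{L^2}^2 .
\end{align*}
The negative part $\int n\ln^-n$ is controlled by Lemma \ref{lemm2mar9} through the second moment $\int n\ln(1+|x|^2)$. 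Its time derivative we estimate by $\int n|\nabla\ln n-\nabla c|\cdot|x|/(1+|x|^2)$ plus transport terms bounded by $\|{\bf u}\|_{L^2}$-type quantities and mass. This part is delicate and may require absorbing it into $\bar{\mathcal G}_\Gamma$ via Cauchy--Schwarz.

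Since $M<8\pi$, this yields $\|{\bf u}\|_{L^2}^2+\int n\ln^+n\lesssim \mathcal E_\Gamma+$ the constants in \eqref{eq218jan}. Gronwall then closes \eqref{estimateemodifytu}, and \eqref{eq218jan} follows from the same lower bound. Finally we let $m\to\infty$ by monotone convergence.
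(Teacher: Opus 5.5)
Your architecture matches the paper's: It\^o formula for $\mathcal E_\Gamma$, cancellation of the coupling through $\nabla\cdot{\bf u}=0$, dissipation split over $\{n\ge\eta\}$ and $\{n<\eta\}$, cross term bounded by $\eta M\le\delta$, then BDG and Gronwall. However, the coercivity step, which you rightly call the main obstacle, has a genuine gap. Your lower bound for $\mathcal E_\Gamma$ carries a term $-C\int n\ln^-n\,dx$. You intend to control it via Lemma~\ref{lemm2mar9} and the evolution of $\int n\ln(1+|x|^2)\,dx$, and you leave that evolution unclosed. It does not close as sketched. The transport contribution $\int n\,{\bf u}\cdot\nabla\ln(1+|x|^2)\,dx$ is an $L^1$--$L^2$ pairing, so it is not bounded by the mass and $\|{\bf u}\|_{L^2}$; it needs $n\in L^s$ for some $s>1$. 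In the paper, $L^s$ bounds on $n$ that are uniform in the localization appear only in Proposition~\ref{proposition1}, up to the stopping time $T_R$. But $T_R$ is defined through the very entropy bound \eqref{eq218jan} you are proving, so this route is circular. Even if it could be closed, your bounds would depend on $\int n_0\ln(1+|x|^2)\,dx$, which does not appear in \eqref{estimateemodifytu}--\eqref{eq218jan}.

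The missing point is that $\int n\ln^-n$ never has to be controlled; avoiding it is exactly what $\Gamma$ is for. Since $\Gamma'(n)=2\eta^{-1}-\eta^{-2}n>0$ on $[0,\eta]$ and $\Gamma=\ln n\ge\ln\eta$ above $\eta$, you have $\Gamma\ge\ln\eta-\frac32$. Hence $\int_{\{n<1\}}n\Gamma(n)\,dx\ge(\ln\eta-\frac32)M$. In the log-HLS inequality the negative part of the entropy has the favourable sign: $\frac12\int nc\le\frac{M}{8\pi}\int n\ln n+C(M)\le\frac{M}{8\pi}\int n\ln^+n+C(M)$. As $\Gamma(n)=\ln n$ on $\{n\ge1\}$, these two facts yield
\[
\mathcal E_\Gamma\ge\Big(1-\frac{M}{8\pi}\Big)\int n\ln^+n\,dx+\frac12\|{\bf u}\|_{L^2}^2-\Big(\frac32-\ln\eta\Big)M-C(M),
\]
which is \eqref{energylower}. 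Then $\|{\bf u}\|_{L^2}^2$ and $|\mathcal E_\Gamma|$ are bounded by $\mathcal E_\Gamma$ plus deterministic constants, Gronwall closes, and \eqref{eq218jan} follows at once. Your own inequality $\int n\ln n\le\int n\Gamma(n)$, inserted into log-HLS, gives the same conclusion.

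There are also minor slips:
\begin{itemize}
\item On $\{n<\eta\}$, $(n\Gamma)''=4\eta^{-1}-3\eta^{-2}n$, not $2\eta^{-1}-\dots$.
\item The transport term reduces to $-\int n\,{\bf u}\cdot\nabla c$; that sign is what cancels the fluid forcing.
\item $\eta^{-1}\int_{\{n<\eta\}}n^2$ is only $\le M$. After integrating the cross terms by parts you get $\int\Psi(n)\,n\,dx$ with $\Psi(n)\lesssim\min\{n,\eta\}$, which is $\lesssim\eta M\le\delta$. This is the paper's use of $\nabla\min\{n,\eta\}$.
\end{itemize}
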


\begin{proof}

{First recall that $\Delta c=-n$. We also recall Theorem \ref{thm:regularity} in which we proved that $n\in C^1(K_\tau;L^{2}(\mathbb R^2))$ $\mathbb P$-a.s. for any compact set $K_\tau$ in $(0,\tau)$. Now, we use the even property of $-(\Delta)^{-1}$ in \eqref{csolution_convolution}   to obtain, }
\begin{align}\label{cibp_estimate_1}
\int_{\mathbb R^2} ndc\,dx=-\int_{\mathbb R^2} n d(\Delta^{-1}n)\,dx=-\int_{\mathbb R^2}(\Delta^{-1}n)dn\,dx=\int_{\mathbb R^2}dnc\,dx.
\end{align}
To show \eqref{estimateemodifytu}, using  the fact \eqref{cibp_estimate_1}, we apply the divergence-free
condition on the vector field ${\bf u}$ together with integration by parts in \eqref{PKSNS-time-dependent} to obtain
\begin{align}\label{combine1jan1}
{d}\bigg[\int_{\mathbb R^2} \left( n\Gamma(n)-\frac{nc}{2} \right)dx\bigg]
&=\int_{\mathbb R^2}dn(\Gamma(n))+nd(\Gamma(n))dxdt-\int_{\mathbb R^2}dncdxdt\nonumber\\
&-\frac{1}{2}\int_{\mathbb R^2}ndcdxdt+\frac{1}{2}\int_{\mathbb R^2} dncdxdt\nonumber\\
&= \int_{\mathbb R^2} dn(\Gamma(n)-c)\,dxdt
  +\int_{\mathbb R^2} nd(\Gamma(n))\,dxdt\nonumber\\
&= -\int_{\mathbb R^2} (n\nabla\ln n-\nabla c\, n)\cdot(\Gamma'(n)\nabla n-\nabla c)\,dxdt
    -\int_{\mathbb R^2} {\bf u}\cdot\nabla n\,\Gamma(n)\,dxdt \nonumber\\
&\quad +\int_{\mathbb R^2} \nabla\cdot({\bf u}n)\,c\,dxdt
    -\int_{\mathbb R^2} \nabla(n\Gamma'(n))\cdot(n\nabla\ln n-\nabla c\,n)\,dxdt \nonumber\\
&\quad -\int_{\mathbb R^2} {\bf u}\cdot\nabla n\,n\Gamma'(n)\,dxdt.
\end{align}
Then by It\^{o}'s formula, we find
\begin{align}\label{combinejan2}
d\big(\frac{1}{2}\|{\bf u}\|_{L^2}^2\big)
= \Big( - \|\nabla {\bf u}\|_{L^2}^2 
    + \langle {\bf u}, n\nabla c \rangle 
    + \frac{1}{2}\|{\bf f}({\bf u})\|_{L_2(U;L_{\sigma}^2)}^2 \Big)\,dt
    + \langle {\bf u}, {\bf f}({\bf u})\,dW_t \rangle.
\end{align}
Combining (\ref{combine1jan1}) with (\ref{combinejan2}), one has
\begin{align}\label{modified_energy_E_evolution}
&{d}\bigg(\int_{\mathbb R^2
} \left( n\Gamma(n)-\frac{nc}{2} \right)dx\bigg)+d(\frac{1}{2}\Vert {\bf u}\Vert_{L^2}^2)\nonumber\\
=&-\int_{\mathbb R^2
}  (n\nabla\ln n-\nabla c\, n)\cdot(\Gamma'(n)\nabla n-\nabla c)\,dxdt
    -\int_{\mathbb R^2
}  {\bf u}\cdot\nabla n\,\Gamma(n)\,dxdt \nonumber\\
&\quad
    -\int_{\mathbb R^2
}  \nabla(n\Gamma'(n))\cdot(n\nabla\ln n-\nabla c\,n)\,dxdt \nonumber\\
&\quad -\int_{\mathbb R^2
}  {\bf u}\cdot\nabla n\,n\Gamma'(n)\,dxdt - \|\nabla {\bf u}\|_{L^2}^2 dt
    + \frac{1}{2}\|{\bf f}({\bf u})\|_{L_2(U;L_{\sigma}^2)}^2 \,dt
    + \langle {\bf u}, {\bf f}({\bf u})\,dW_t \rangle\nonumber\\
    &:=\sum_{i=1}^7T_i.
\end{align}

Next, we consider the second term $T_2$ and the fourth term $T_4$.
Since the function $\Gamma$ is finite near the origin, we define
\[
E(r):=\int_0^{r}\Gamma(s)\,ds,
\qquad
G(r):=\int_0^{r}s\Gamma'(s)\,ds .
\]
Using the divergence-free condition $\nabla\cdot {\bf u}=0$ and integration by parts, we compute
\begin{align}\label{t2mar9mon1}
T_2=-\int_{\mathbb R^2} {\bf u}\cdot\nabla(E(n))\,dxdt
   =\int_{\mathbb R^2} (\nabla\cdot {\bf u})\,E(n)\,dxdt
   =0,
\end{align}
and similarly,
\begin{align}\label{t4mar9mon2}
T_4=-\int_{\mathbb R^2} {\bf u}\cdot\nabla(G(n))\,dxdt
   =\int_{\mathbb R^2} (\nabla\cdot {\bf u})\,G(n)\,dxdt
   =0.
\end{align}

Next, we estimate the terms $T_1+T_3$.
Recalling the definition of $\Gamma$ in \eqref{gamman}, the cut-off threshold
$\eta=\min\{\delta/M,1\}$, and the fact that
\[
\Gamma'(n)=2\eta^{-1}-\eta^{-2}n \qquad \text{for } n\le\eta,
\]
a direct calculation yields
\begin{align}\label{t1t3mar9mon}
T_1+T_3
&= -\int_{\{n\ge\eta\}} (n\nabla\ln n-\nabla c\,n)\cdot\left(\frac{1}{n}\nabla n-\nabla c\right)\,dxdt \nonumber\\
&\quad -\int_{\{n<\eta\}} (n\nabla\ln n-\nabla c\,n)\cdot\left((2\eta^{-1}-\eta^{-2}n)\nabla n-\nabla c\right)\,dxdt\nonumber \\
&\quad -\int_{\{n<\eta\}} (2\eta^{-1}-\eta^{-2}n)\nabla n\cdot(n\nabla\ln n-\nabla c\,n)\,dxdt\nonumber \\
&\quad +\int_{\{n<\eta\}} n\eta^{-2}\nabla n\cdot(n\nabla\ln n-\nabla c\,n)\,dxdt .
\end{align}
Noting the inequality
\[
\sup_{n<\eta}\sqrt{\left(-3\eta^{-2}n^{2}+4\eta^{-1}n\right)}
\leq \frac{2}{\sqrt{3}} < 2 ,\]
one has from (\ref{t1t3mar9mon}) that 
\begin{align*}
T_1+T_3
&= -\int_{\{n\ge\eta\}} n|\nabla\ln n-\nabla c|^{2}\,dxdt
    -\int_{\{n<\eta\}} (4\eta^{-1}-3\eta^{-2}n)|\nabla n|^{2}\,dxdt \\
&\quad +\int_{\{n<\eta\}}(-3\eta^{-2}n+4\eta^{-1})n\,\nabla c\cdot\nabla n\,dxdt
    -\int_{\{n<\eta\}} n|\nabla c|^{2}\,dxdt \\
&\le -\int_{\{n\ge\eta\}} n|\nabla\ln n-\nabla c|^{2}\,dxdt
     -\int_{\{n<\eta\}} (4\eta^{-1}-3\eta^{-2}n)|\nabla n|^{2}\,dxdt \\
&\quad +\frac{2}{\sqrt{3}}\int_{\{n<\eta\}}  \sqrt{\left(-3\eta^{-2} n + 4\eta^{-1}\right)n}
|\nabla c||\nabla n|\,dxdt
     -\int_{\{n<\eta\}} n|\nabla c|^{2}\,dxdt
     +\int_{\{n<\eta\}} \nabla n\cdot\nabla c\,dxdt .
\end{align*}
With the aid of Young's inequality, we obtain 
\begin{align}\label{t1t3uppermar9mon}
T_1+T_3
&\le -\int_{\{n\ge\eta\}} n|\nabla\ln n-\nabla c|^{2}\,dxdt
      -\int_{\{n<\eta\}} \frac{2(4\eta^{-1}-3\eta^{-2}n)}{3}\,|\nabla n|^{2}\,dxdt \nonumber\\
&\quad +\int_{\{n<\eta\}} \nabla n\cdot\nabla c\,dxdt .
\end{align}
Now, we show that 
\begin{equation}
\int_{\{n<\eta\}} \nabla n\cdot\nabla c\,dx \le \delta .
\end{equation}
Indeed,  we use the choice of $\eta$  and integration by parts to  get
\begin{align}\label{crosstermncmar9mon2}
\int_{\{n<\eta\}} \nabla n\cdot\nabla c\,dx
&= \int_{\mathbb R^2} \nabla(\min\{n,\eta\})\cdot\nabla c\,dx \nonumber\\
&= -\int_{\mathbb R^2}  \min\{n,\eta\}\,\Delta c\,dx
 \le \eta\int_{\mathbb R^2}  n\,dx
 \le \eta M
 \le \delta ,
\end{align}
where we used the identity
 $\nabla n\,\mathbf{1}_{\{n<\eta\}}=\nabla(\min\{n,\eta\})$ \text{a.e.}  for $n\in W^{1,p}(\mathbb{R}^{2})$, $1<p<\infty$.
 Collecting (\ref{t1t3uppermar9mon}) and (\ref{crosstermncmar9mon2}), we conclude that
\begin{equation}\label{428smalldeltamar9}
T_1+T_3
\le -\int_{\{n\ge\eta\}} n|\nabla\ln n-\nabla c|^{2}\,dxdt
     -\frac{2}{3}\int_{\{n<\eta\}} (4\eta^{-1}-3\eta^{-2}n)|\nabla n|^{2}\,dxdt
     +\delta dt
\le \delta dt.
\end{equation}
Thus, using  (\ref{t2mar9mon1}), (\ref{t4mar9mon2}) and (\ref{428smalldeltamar9}) in (\ref{modified_energy_E_evolution}) yields
\begin{align}\label{thusjan}
{d}\bigg(\int_{\mathbb R^2} \left( n\Gamma(n)-\frac{nc}{2} \right)dx\bigg)+d(\frac{1}{2}\Vert {\bf u}\Vert_{L^2}^2)+\bar{\mathcal G}_{\Gamma} \leq \delta dt+\frac{1}{2}\Vert {{\bf f}(\bf u})\Vert_{L^2{(U;L_{\sigma}^2})}^2dt+\sum_{j\geq 1} ({\bf u},{\bf f}_j({\bf u}))_{L^2}dW^j, 
\end{align}
where $W^j = \left<W,e_j\right>$ and 
\begin{align}\label{barGgamma}
\bar{\mathcal G}_{\Gamma}:=\int_{\{n\ge\eta\}} n|\nabla\ln n-\nabla c|^{2}\,dxdt+\Vert \nabla {\bf u}\Vert_{L^2}^2dt
     +\frac{2}{3}\int_{\{n<\eta\}} (4\eta^{-1}-3\eta^{-2}n)|\nabla n|^{2}\,dxdt.
\end{align}
Now, observe that since $\Gamma(n)$, given in (\ref{gamman}), is bounded from below by the finite value
$\ln \eta(\delta,M)-3/2$, we have
that 
\begin{equation}\label{eq231jan}
\bigg(-\int_{\{n<1\}} n\,\Gamma(n)\,dx\bigg)
\le \left(-\ln \eta(\delta,M) + \frac{3}{2}\right)M,
 \end{equation}
 holds $\mathbb{P}$-almost surely.
We apply Lemma \ref{logHLSlemma} and \eqref{eq231jan},
to obtain, for some $\delta_2>0$,
\begin{align}
\int_{\mathbb R^2} n\Gamma(n)\,dx-\frac12\int_{\mathbb R^2} nc\,dx&\geq\left(1-\frac{M}{8\pi}\right)\int_{\mathbb R^2} n\ln^{+}n\,dx
   +\int_{\{n<1\}} n\Gamma(n)\,dx -C(M)\frac{M}{8\pi}\nonumber\\
   &\geq \delta_2\int_{\mathbb R^2} n\ln^+n\, dx+(\ln\eta-\frac{3}{2})M-\frac{C(M)M}{8\pi}\nonumber\\
   &\geq (\ln\eta-\frac{3}{2})M-\frac{C(M)M}{8\pi}.\label{energylower}
\end{align}
In light of assumption (H1) and (\ref{energylower}), we obtain that for $k\geq 1,$ 
\begin{align}\label{itoformula_quadratic_modified_energy}
& \mathbb E\Big[\frac{1}{2}\int_0^{T\wedge\tau}\|{\bf f}({\bf u})\|_{L_2(U;L^2)}^{2k}(s)\,ds\Big]\lesssim (1+\mathbb E\int_0^{T\wedge\tau}\Vert {\bf u}\Vert^{2k}_{L^2}\,ds )\nonumber\\
 \lesssim&
 \mathbb E\big[\int_0^{T\wedge\tau}|\mathcal J(t)|^k\,ds\big] +(\frac{3}{2}-\ln\eta)^kM^kT+\frac{C^k(M)M^k}{(8\pi)^k}T.
\end{align}
Moreover, we have for $k\geq 1,$ 
by using the BDG inequality together with Young's inequality, that
\begin{align}\label{413nowmar9mon1}
\mathbb{E}\bigg[\sup_{s \le t} \int_0^{s\wedge  \tau}\bigg|\sum_{j\geq 1}\left( {\bf u}(r), {\bf f}_j({\bf u}(r)) \right)_{L^2} 
\, dW^j(r)\bigg|^k\bigg]
\le&
\varepsilon \, \mathbb{E}\sup_{s \le t\wedge\tau}\|{\bf u}(s)\|_{L^2}^{2k}
+
C_{\varepsilon,k}\,
\mathbb{E}\left(
\int_0^{t\wedge\tau} \big(\|{\bf u}(r)\|_{L^2}^2 + 1\big)\,dr
\right)^k\nonumber\\
\leq& \varepsilon \mathbb E[\sup_{t\leq T\wedge\tau}|\mathcal E_{\Gamma}(t)|^k]+C_{\varepsilon,k}{ T^{k-1}}\mathbb E\int_0^{T\wedge \tau}|\mathcal E_{\Gamma}|^k(r)dr\nonumber\\
&+[(\frac{3}{2}-\ln\eta)^kM^k+\frac{C^k(M)M^k}{(8\pi)^k}]T^k+C_{\varepsilon,k}T^k.
\end{align}

We next integrate (\ref{thusjan}) over $(0,T\wedge\tau)$ and raise both sides to a power $k\geq 1$, take $\sup_{t\leq \tau\wedge T}$ and then $\mathbb{E}$ on both sides, and finally apply (\ref{itoformula_quadratic_modified_energy}) and (\ref{413nowmar9mon1}) to obtain
\begin{align}\label{eqgronwallbeforemar10}
 &\mathbb E[\sup_{t\leq T\wedge \tau}|\mathcal E_{\Gamma}(t)|^k]+\mathbb E\big(\int_0^{T\wedge\tau}\bar{\mathcal G}(r)dr\big)^k\nonumber\\
 \lesssim& |\mathcal E_{\Gamma}(0)|^k+C_kT^{k-1}\mathbb E\int_0^{T\wedge\tau}|\mathcal E_{\Gamma}|^k(r)dr+\left[(\frac{3}{2}-\ln\eta)^kM^k+\frac{C^k(M)M^k}{(8\pi)^k}+C_k\right](T^k+T).
 \end{align}
By applying the Gr\"{o}nwall lemma to (\ref{eqgronwallbeforemar10}), we then arrive, for any $k\geq 1$, at
\begin{align}\label{lastinequalitymar8sunmar10}
 &\mathbb E[\sup_{t\leq T\wedge\tau}|\mathcal E_{\Gamma}(t)|^k]+\mathbb E\bigg(\int_0^{T\wedge\tau} \bar{\mathcal G}(r) \, dr\bigg)^k \nonumber\\
 \lesssim &e^{C_kT^k} 
\left(
|\mathcal E_{\Gamma}(0)|^k+\bigg[(\frac{3}{2}-\ln\eta)^kM^k+\frac{C^k(M)M^k}{(8\pi)^k}+C_k+\delta^k\bigg](T^k+T)
\right), \quad \forall\, 0<t<T,
\end{align}
where $C_k>0$ is a constant depending on $k$.
This completes the proof of \eqref{estimateemodifytu}.

Then, for any $k\geq 1,$ we have from (\ref{estimateemodifytu}) that the following estimate of
 the positive part of the entropy and the fluid energy holds:
\begin{align}\label{entropyestimate_lemm6}
\mathbb E\bigg[\left(1-\frac{M}{8\pi}\right)&\sup_{t\leq T\wedge\tau}\bigg(\int_{\mathbb R^2} n\ln^{+}n\,dx\bigg)^k
+\frac12\sup_{t\leq T\wedge \tau}(\|{\bf u}\|_{L^2}^2)^k\bigg]\nonumber\\
\lesssim& e^{C_kT^k} 
\left(
|\mathcal E_{\Gamma}(0)|^k+\bigg[(\frac{3}{2}-\ln\eta)^kM^k+\frac{C^k(M)M^k}{(8\pi)^k}+C_k+\delta^k\bigg](T^k+T)
\right)\nonumber\\
    & +\Big(M^k\ln\eta(\delta,M)^{-k}
     +\big(\frac{3}{2}M\big)^k
     +C^k(M)\frac{M^k}{(8\pi)^k}\Big)C_k,
\end{align}
for some constant $C_k>0.$  Moreover, in light of \eqref{subcriticalmass_condition}, one finds some $\hat\delta>0$ such that $1-\frac{M}{8\pi}>\hat\delta>0$, which completes  the proof of this lemma by using (\ref{entropyestimate_lemm6}).

\end{proof}

Next, we define the following $(\mathcal{F}_t)_{t\geq 0}-$stopping time.
For any \(R>0\), we let
\begin{align}\label{stopingtime1}
T_R &:= \inf \Bigg\{ t > 0 : 
 \sup_{s\in[0,t{ \wedge \tau}]} \left(\int_{\mathbb R^2} n(s)\ln^+ n(s) \, dx +
\| {\bf u}(s) \|^2_{L^2} \right) > R
\Bigg\}.
\end{align}
For this stopping time, we prove the following result.
\begin{proposition}\label{proposition1}
Assume that $(n,{\bf u},\tau)$ is the maximal local mild solution of (\ref{PKSNS-time-dependent}) with initial data $(n_0,{\bf u}_0)$ satisfying (\ref{regularic}) and the subcritical mass condition:
$$\int_{\mathbb R^2} n_0 dx<8\pi.$$  Then we have 
\begin{align}
\mathbb E\sup_{t\leq T{ \wedge\tau \wedge T_R
}}\left(\Vert\nabla{\bf u}(t)\Vert_{L^r}^{\frac{pq}{p-q}}+\Vert {\bf u}(t)\Vert_{L^{\frac{pq}{p-q}}}^{\frac{pq}{p-q}}+\Vert n(t)\Vert_{L^{p}}^{\frac{pq}{p-q}}\right)\leq C_T(R),
\end{align}
where $p$, $q$ and $r$ are given in (\ref{pqr}).
\end{proposition}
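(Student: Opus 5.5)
We plan to convert the entropy and energy bounds available up to $T_R$ into bounds on $\|n\|_{L^p}$, $\|\mathbf u\|_{L^{pq/(p-q)}}$ and $\|\nabla\mathbf u\|_{L^r}$, running the mild-formulation estimates of Section~\ref{sect3} again. This time no cut-off is present; all control comes from the stopping time $T_R$. On $[0,T\wedge\tau\wedge T_R]$ the definition \eqref{stopingtime1} gives, almost surely,
\[
\int n\ln^+ n \le R,\qquad \|\mathbf u\|_{L^2}^2\le R,\qquad \|n\|_{L^1}=M .
\]

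\textbf{Step 1: an $L^{\hat p}$ bound for $n$ with $\hat p>2$.} We test the $n$-equation with $(n-K)_+^{\hat p-1}$ for a large level $K$. This is legitimate because Theorem~\ref{thm:regularity} makes $n$ smooth, and the transport term vanishes since $\nabla\cdot\mathbf u=0$. The chemotactic term contributes
\[
\int (n-K)_+^{\hat p-1}\, n^2 \lesssim \int (n-K)_+^{\hat p+1} + K^{2}\int (n-K)_+^{\hat p-1}.
\]
We control $\|(n-K)_+\|_{L^{\hat p+1}}^{\hat p+1}$ by a Gagliardo--Nirenberg inequality of the form
\[
\|w\|_{L^{\hat p+1}}^{\hat p+1}\lesssim \|\nabla w^{\hat p/2}\|_{L^2}^2\,\|w\|_{L^1}, \qquad w=(n-K)_+ .
\]
We then use
\[
\|(n-K)_+\|_{L^1}\le \frac{1}{\ln K}\int n\ln^+ n\le \frac{R}{\ln K}.
\]
Choosing $K=K(R)$ large, this term is absorbed into the dissipation. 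This is the standard Jäger--Luckhaus argument, and it is the step where subcriticality enters, through the bound on $\int n\ln^+ n$. Gronwall then gives the pathwise bound
\[
\sup_{t\le T\wedge\tau\wedge T_R}\|n\|_{L^{\hat p}}\le C_T(R).
\]
Interpolating with $L^1$ yields the required $L^p$ bound, $p=2-\epsilon$, for every moment.

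\textbf{Step 2: the velocity.} With $n$ bounded in $L^1\cap L^{\hat p}$, Lemma~\ref{lemma28} bounds $\nabla c$ in $L^\infty\cap L^{s}$ for every $s>2$. Hence the forcing $n\nabla c$ is bounded in every $L^{a}$ with $1\le a\le\hat p$, by $C(R)$. We split the mild form \eqref{3p2mar} of $\mathbf u$ into three parts.
\begin{itemize}
\item The linear part is harmless.
\item The stochastic convolution is estimated by \eqref{110newlabelnewmar} and \eqref{110newlabelnewdec} with $\alpha=\tfrac12$; its moments are bounded by $C_T\big(1+\mathbb E\int_0^{t}\|\mathbf u\|^{pq/(p-q)}\big)$ and the analogous quantity for $\nabla\mathbf u$, which is of Gronwall type.
\item The convective term is the delicate one.
\end{itemize}
For the convective term we will not use the quadratic cut-off estimate. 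Instead we write it in divergence form and use the $L^2$ energy bound $\|\mathbf u\|_{L^2}^2\le R$ together with the interpolation
\[
\|\mathbf u\|_{L^{2s}}^2\lesssim \|\mathbf u\|_{L^2}^{2\theta}\,\|\nabla\mathbf u\|_{L^r}^{2(1-\theta)} .
\]
This makes $\mathbf u\otimes\mathbf u$ sublinear in the quantity
\[
Y(t):=\sup_{s\le t}\Big(\|\mathbf u\|_{L^{pq/(p-q)}}+\|\nabla\mathbf u\|_{L^r}\Big)
\]
(up to powers of $R$), and similarly for $\mathbf u\cdot\nabla\mathbf u$ in the $\nabla\mathbf u$ estimate \eqref{labelphi2apr}.

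\textbf{Closing the estimate.} Raising to the power $\frac{pq}{p-q}$, taking $\sup$ and then expectation produces an inequality of the form
\[
\mathbb E\, Y(t)^{\frac{pq}{p-q}} \le C_T(R) + C_T(R)\int_0^t (t-s)^{-\gamma}\,\mathbb E\, Y(s)^{\frac{pq}{p-q}}\,ds, \qquad \gamma<1 .
\]
A singular (Henry-type) Gronwall lemma closes it. This step needs $\mathbb E\,Y^{\frac{pq}{p-q}}<\infty$ a priori, which we secure by localising further with the stopping times $\tau_m$ and letting $m\to\infty$ by monotone convergence.

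\textbf{Main obstacle.} We expect the main obstacle to be the nonlinear term $\mathbf u\cdot\nabla\mathbf u$ in the gradient estimate. It is genuinely quadratic in $\nabla\mathbf u$ and $\mathbf u$, and the $L^2$ energy gives only a power $2(1-\theta)$ slightly above $1$ in $Y$. If linearisation by interpolation fails, we will first try the 2D vorticity formulation $\omega=\nabla^\perp\cdot\mathbf u$. The transport term then disappears in $L^r$ energy estimates for $\omega$, since $\int\mathbf u\cdot\nabla\omega\,|\omega|^{r-2}\omega=0$. Applying Itô's formula to $\|\omega\|_{L^r}^r$ gives a linear Gronwall bound driven by $\nabla\times(n\nabla c)$, whose $L^r$-pairing we handle by integrating by parts onto $\nabla(|\omega|^{r-2}\omega)$. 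This uses only $\|n\nabla c\|_{L^r}\le C(R)$, and Calderón--Zygmund then returns $\|\nabla\mathbf u\|_{L^r}$.
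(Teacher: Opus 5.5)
Your estimate for $n$ is correct, but the closing argument you give first for the velocity fails. The vorticity estimate you hold in reserve is therefore not a fallback; it is required.

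\textbf{The gap.} The claim that interpolating against $\|{\bf u}\|_{L^2}^2\le R$ makes ${\bf u}\cdot\nabla{\bf u}$ sublinear in $Y$ in the gradient estimate \eqref{labelphi2apr} is false.
There $\nabla e^{-(t-s)A}\mathcal P$ maps $L^a$ to $L^r$ with kernel $(t-s)^{-\frac12-\frac1a+\frac1r}$. Write $\|{\bf u}\cdot\nabla{\bf u}\|_{L^a}\le\|{\bf u}\|_{L^b}\|\nabla{\bf u}\|_{L^r}$ with $\frac1a=\frac1b+\frac1r$.
Time integrability then forces $b>2$. Gagliardo--Nirenberg gives $\|{\bf u}\|_{L^b}\lesssim\|{\bf u}\|_{L^2}^{\theta}\|\nabla{\bf u}\|_{L^r}^{1-\theta}$ with $\theta<1$, so the term has size $R^{\theta/2}\|\nabla{\bf u}\|_{L^r}^{2-\theta}$, which is strictly superlinear. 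The borderline $b=2$ is linear, but its kernel $(t-s)^{-1}$ is not integrable.
This reflects the scaling-criticality of $L^2$ for 2D Navier--Stokes. No choice of Lebesgue exponents yields your singular Gr\"onwall inequality; at best you get local-in-time control. (For ${\bf u}\otimes{\bf u}$ in the $L^{\frac{pq}{p-q}}$ estimate, your claim holds only if you measure it in some $L^a$ with $a\le r$.)
What is missing is a subcritical bound on $\nabla{\bf u}$ from the vorticity equation, where transport drops out.

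\textbf{Comparison with the paper.} Once the vorticity step is the main argument, your proof is complete, and it differs from the paper in both halves.

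For $n$, the paper uses the random level $K_R(T)=e^{C_{GNS}\sup\|n\ln^+n\|_{L^1}}$. It derives exponential moments of $\mathcal E_\Gamma$ to control the moments of $K_R(T)$, proves an $L^2$ bound via Nash, and then a separate $L^4$ bound.
You instead use that $\int n\ln^+n\le R$ pathwise up to $T\wedge\tau\wedge T_R$ by \eqref{stopingtime1}. A deterministic $K(R)$ and one J\"ager--Luckhaus estimate then give pathwise $L^{\hat p}$ bounds for every $\hat p$, and hence $\nabla c\in L^\infty$, in one step. This is simpler.
It also shows that $M<8\pi$ is not used inside the proposition; it enters only through $\mathbb P(T_R\le T)\to0$ in the proof of Theorem~\ref{thm1}. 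So attributing subcriticality to your Step 1 is a harmless misstatement.

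For ${\bf u}$, the paper does the vorticity balance only in $L^2$, staying in Hilbert space, to obtain \eqref{nablauR_2p_2_estimate}. Gagliardo--Nirenberg then gives $\|{\bf u}\|_{L^{\frac{pq}{p-q}}}$.
The Duhamel estimate \eqref{nablau} for $\|\nabla{\bf u}\|_{L^r}$ bounds the convective term by $\|{\bf u}\|_{L^{\frac{pq}{p-q}}}\|\nabla{\bf u}\|_{L^2}$, both already controlled. Only the stochastic convolution feeds back, and it does so linearly.
Your $L^r$ vorticity estimate merges these steps into one:
\begin{itemize}
\item apply It\^o's formula to $\|\omega\|_{L^r}^r$ (and its powers, localised by $\tau_m$);
\item handle the forcing via $\|n\nabla c\|_{L^r}\le\|n\|_{L^r}\|\nabla c\|_{L^\infty}$, taking $\hat p\ge r$;
\item obtain $\|{\bf u}\|_{L^{\frac{pq}{p-q}}}$ directly from Gagliardo--Nirenberg and $\|{\bf u}\|_{L^2}^2\le R$, with no mild estimate for ${\bf u}$ at all.
\end{itemize}
The cost is an $L^r$-valued It\^o formula. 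You also need to treat the It\^o correction and the martingale term, which your sketch does not yet do. This requires (H1) with $\beta=\frac12$ in $L^r$, together with ($\ell^2$-valued) Calder\'on--Zygmund bounds relating $\nabla g$, $A^{\frac12}g$ and $\nabla\times g$ for divergence-free $g$.
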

\begin{proof}
Recall the definition of the modified energy \eqref{modified}.
Applying It\^o's  product rule to \eqref{modified_energy_E_evolution}  yields 
\[
\begin{aligned}
d|\mathcal E_{\Gamma}|^k
={}& k |\mathcal E_{\Gamma}|^{k-2} \mathcal E_{\Gamma}\frac{1}{2}\Vert {\bf f}({\bf u})\Vert_{L_2}^2 \, dt+
k |\mathcal E_{\Gamma}|^{k-2} \mathcal E_{\Gamma}(T_1+T_3) \,+ k |\mathcal E_{\Gamma}|^{k-2} \mathcal E_{\Gamma} \sum_{j\geq 1}( {\bf u},{\bf f}_j({\bf u}))_{L^2} \, dW^j_t\\
&+ \frac{1}{2} k (k-1) |\mathcal E_{\Gamma}|^{k-2} \sum_{j\geq 1}( {\bf u},\mathcal P{\bf f}_j({\bf u}))^2_{L^2}\, dt,
\end{aligned}
\]
where $T_1$ and $T_3$ are defined in (\ref{t1t3mar9mon}), and $W^j_t = \left<W(t), e_j\right>$. Then, it follows that 
\begin{align}\label{afterint}
d|\mathcal E_{\Gamma}|^k&+k|\mathcal E_{\Gamma}|^{k-2} \mathcal E_{\Gamma}\bar{\mathcal G}_{\Gamma}
\leq{} k |\mathcal E_{\Gamma}|^{k-2} \mathcal E_{\Gamma}(\frac{1}{2}\Vert {\bf f}({\bf u})\Vert_{L_2}^2) \, dt
+ k |\mathcal E_{\Gamma}|^{k-2} \mathcal E_{\Gamma} \sum_{j\geq 1}( {\bf u},{\bf f}_j({\bf u}))_{L^2} \, dW^j_t\nonumber\\
&+ \frac{1}{2} k (k-1) |\mathcal E_{\Gamma}|^{k-2} \sum_{j\geq 1}( {\bf u},\mathcal P{\bf f}_j({\bf u}))^2_{L^2}\, dt+k |\mathcal E_{\Gamma}|^{k-2} \mathcal E_{\Gamma}\delta dt,
\end{align}
where $\bar {\mathcal G}$ is defined in (\ref{barGgamma}) and $\delta>0$ is given in \eqref{428smalldeltamar9}.
After integration on (\ref{afterint}) over {{$(0,t\wedge T_R\wedge\tau)$}}, we have
\begin{align}\label{combining1mar10}
|\mathcal E_{\Gamma}|^k({t\wedge T_R\wedge\tau})&+\int_0^{{t\wedge T_R\wedge\tau}}k |\mathcal E_{\Gamma}|^{k-2} \mathcal E_{\Gamma}\bar{\mathcal G}_{\Gamma}ds
\leq{} |\mathcal E^k_{\Gamma}|(0)+\int_0^{{t\wedge T_R\wedge \tau}}k|\mathcal E_{\Gamma}|^{k-2} \mathcal E_{\Gamma}(\frac{1}{2}\Vert {\bf f}({\bf u})\Vert_{L_2}^2) \, ds\nonumber\\
+& \int_0^{{t\wedge T_R\wedge\tau}}k |\mathcal E_{\Gamma}|^{k-2} \mathcal E_{\Gamma} \sum_{j\geq 1}( {\bf u},{\bf f}_j({\bf u}))_{L^2} \, dW^j_s\nonumber\\
&+\int_0^{{t\wedge T_R\wedge\tau}} \frac{1}{2} k (k-1) |\mathcal E_{\Gamma}|^{k-2} \sum_{j\geq 1}( u,\mathcal P{\bf f}_j(t,u))^2_{L^2}\, ds+\int_0^{{t\wedge T_R\wedge\tau}}k|\mathcal E_{\Gamma}|^{k-2} \mathcal E_{\Gamma}\delta ds .
\end{align}
In light of (\ref{stopingtime1}), we find on $(0, {T_R\wedge\tau} )$:
\begin{align}\label{combining1mar102}
\frac{1}{2}\|{\bf f}({\bf u})\|_{L_2(U;L_{\sigma}^2)}^2\lesssim (1+\Vert {\bf u}\Vert^2_{L^2} )\lesssim 1+R,
\end{align}
and thus $(0,T_R\wedge \tau)$,
\begin{align}\label{combining1mar103}
\sum_{j\geq 1}( {\bf u},\mathcal P {\bf f}_j({\bf u}))^2_{L^2}\leq \Vert {\bf u}\Vert_{L^2}\Vert {\bf f}({\bf u})\Vert_{L_2(U;L_{\sigma}^2)}\lesssim 1+R.
\end{align}
In addition, we utilize BDG inequality to get for constant $C>0$,
\begin{align}\label{441mar19tue}
k\mathbb E\big(\sup_{t\in(0,T)}\int_0^{t\wedge T_R\wedge\tau}& |\mathcal E_{\Gamma}|^{k-2} \mathcal E_{\Gamma} \sum_{j\geq 1}( {\bf u},{\bf f}_j({\bf u}))_{L^2} dW_s^j\big)
\leq C k\mathbb E\big(\int_0^{T\wedge T_R\wedge\tau}  |\mathcal E_{\Gamma}|^{2k-2}\big| \sum_{j\geq 1}({\bf  u},{\bf f}_j({\bf u}))_{L^2} \big|^2ds\big)^{\frac{1}{2}}\nonumber\\
\leq &Ck \mathbb E\big[\sup_{t\in(0,T\wedge T_R\wedge\tau)}(|\mathcal E_{\Gamma}|^{k-1}\Vert {\bf u}\Vert_{2}^2)\int_0^{T\wedge T_R\wedge\tau}|\mathcal E_{\Gamma}|^{k-1}\Vert {\bf f}_j({\bf u})\Vert_{2}^2ds\big]^{\frac{1}{2}}\nonumber\\
\leq &Ck \mathbb E\big[\sup_{ {t\in(0,T\wedge T_R
\wedge\tau)}}(|\mathcal E_{\Gamma}|^{k-1} )R^2\int_0^{ {T\wedge T_R\wedge\tau}}|\mathcal E_{\Gamma}|^{k-1}| ds\big]^{\frac{1}{2}}.
\end{align}
We collect (\ref{combining1mar10})--(\ref{441mar19tue}) to obtain 
\[
\begin{aligned}
 \mathbb E[\sup_{t\in(0,T\wedge T_R
 \wedge\tau)}|\mathcal E_{\Gamma}|^k(t)]
\leq{}& |\mathcal E^k_{\Gamma}|(0)+Ck\mathbb E\bigg(\int_0^{T\wedge T_R
\wedge\tau}|\mathcal E_{\Gamma}|^{k-2} \mathcal E_{\Gamma}(1+R) \, ds\bigg)\\
&+ \mathbb E\bigg(\sup_{t\in(0,T)}\int_0^{t\wedge T_R\wedge\tau}k |\mathcal E_{\Gamma}|^{k-2} \mathcal E_{\Gamma} \sum_{j\geq 1}( {\bf u},{\bf f}_j({\bf u}))_{L^2} \, dW^j_s\bigg)\\
&+\mathbb E\bigg(\int_0^{T\wedge T_R
\wedge\tau} \frac{1}{2} k (k-1) |\mathcal E_{\Gamma}|^{k-2} \sum_{j\geq 1}( {\bf u},\mathcal P{\bf f}_j({\bf u}))^2_{L^2}\, ds\bigg)+\mathbb E\bigg(\int_0^{T\wedge T_R\wedge\tau}k |\mathcal E_{\Gamma}|^{k-2} \mathcal E_{\Gamma}\delta ds\bigg)\\
\leq{}& |\mathcal E^k_{\Gamma}|(0)+Ck\mathbb E(\int_0^{ {T\wedge T_R\wedge\tau}}|\mathcal E_{\Gamma}|^{k-1}ds)(1+R)
 +Ck \mathbb E\big[\int_0^{ {T\wedge T_R\wedge\tau}}|\mathcal E_{\Gamma}|^{k-1} ds \big]R \\
&+Ck^2\mathbb E\bigg(\int_0^{T\wedge T_R\wedge\tau}|\mathcal E_{\Gamma}|^{k-2}ds\bigg)(1+R)+k\delta \mathbb E(\int_0^{T\wedge T_R
\wedge\tau}|\mathcal E_{\Gamma}|^{k-1}ds)\\
\leq & |\mathcal E^k_{\Gamma}|(0)+Ck (R^{k-1}+R^{k})T
 +Ck TR^{k} \\
&+Ck^2T(R^{k-2}+R^{k-1})+k\delta T R^{k-1},
\end{aligned}
\]
which implies for any deterministic constant $C_0>0$ that, 
$$\mathbb E[\sup_{t\in(0,T\wedge T_R\wedge\tau)}C_0^k|\mathcal E_{\Gamma}|^k](t)\leq C C_0^kk^2 T R^k.$$

Hence, we have for any $k\geq 1$ that, 
\begin{align}\label{nlognboundfeb26}
    \mathbb E[\sup_{{(0,T\wedge T_R\wedge\tau)}}e^{C_0|\mathcal E_{\Gamma}| }]\leq \mathbb E\left[\sup_{(0,T\wedge T_R
\wedge\tau)}\sum_{k=0}^\infty \frac{C_0^k|\mathcal E_{\Gamma}|^k}{k!}\right] \leq CT((CR)^2+CR)e^{C_0R},
\end{align}
Then by using (\ref{energylower}) and (\ref{nlognboundfeb26}), we obtain for any constant $C_0>0$ that,
\begin{align}\label{estimate_exp_nlogn}
\mathbb E\left(\sup_{t\leq T\wedge\tau{\wedge T_R}}e^{C_0\int n\ln^+ n dx}\right)\leq C({ R,T}).
\end{align}

We shall estimate density $n$ in $L^2(\mathbb R^2)$.  To begin with, we decompose it as follows for any random variable $K_R(T)$ (which will be chosen, depending on $R$ and $T$, carefully below). On the set $\{(\omega,t): t \leq T\wedge T_R\wedge \tau\}$, we write
\[
n = (n-K_R(T))_+ + \min\{n,K_R(T)\}, \quad K_R(T)>1.
\]
Moreover, on $\{(\omega,t): t \leq T\wedge T_R\wedge \tau\}$, we define the following modified mass:
\[
\eta_{K_R(T)} := \int_{\mathbb R^2} (n-K_R(T))_+ \, dx.
\]
In light of the definition of $\eta_{K_R}$, we obtain
\begin{equation}\label{log-estimate}
\eta_{K_R(T)}\le \int_{\mathbb R^2} \frac{\ln^+ n}{\ln K_R(T)} (n-K_R(T))_+ \, dx,
\end{equation}
which means on $\{(\omega,t): t \leq T\wedge T_R\wedge \tau\}$ that,
\begin{equation}\label{etaK-estimate}
\eta_{K_R(T)}{ (t)}  \le \frac{ \Vert n{ (t)} \ln^+n{ (t)} \Vert_{L^1}}{ \ln K_R(T)}.
\end{equation}
Note that if by choosing the vertical cut-off level $K_R(T)$ large enough almost surely, we can made $\eta_{K_R(T)}$ arbitrarily small. 
This requires, in our setting, $K_R(T)$ to be a random variable.

Next, we use the divergence-free condition of the fluid vector field ${\bf u}$, the Gagliardo-Nirenberg-Sobolev inequality and the Nash inequality given in Lemma \ref{Nashinequality} to estimate the time evolution of the $L^2$ norm of the truncated density $(n-K_R(T))_+$ as follows:
\begin{align}\label{obtainfrommar10tue1}
&\frac{1}{2}{d} \|(n-K_R(T))_+\|_{L^2}^2 
\le - \int_{\mathbb R^2} |\nabla (n-K_R(T))_+|^2 \, dxdt
+ \frac{1}{2} \int_{\mathbb R^2} (n-K_R(T))_+^3 \, dxdt\nonumber\\
&\qquad\qquad\qquad+ \frac{3}{2}K_R(T) \int_{\mathbb R^2} (n-K_R(T))_+^2 \, dxdt + K_R^2(T) Mdt \nonumber\\
&\le - \big(1 - \frac{C_{\text{GNS}} \eta_{K_R}}{2} \big) \|\nabla (n-K_R(T))_+\|_{L^2}^2dt 
+ \frac{3}{2} K_R(T) \|(n-K_R(T))_+\|_{L^2}^2dt + K_R^2(T) Mdt, 
\end{align}
where $C_{\text{GNS}}$ is the constant from the Gagliardo-Nirenberg-Sobolev inequality. 
This motivates the choice of the random variable 
$$K_R{(T)}:= e^{\sup\limits_{t\leq T{\wedge \tau\wedge T_R}}\Vert n(t)\ln^+n (t)\Vert_{L^1}C_{GNS}}$$ 
so that  we have from \eqref{etaK-estimate} that, 
$$\sup_{t\leq T\wedge \tau\wedge T_R}\eta_{  K_R(T)}(t)<\frac{1}{C_{\text{GNS}}}.$$
 Moreover, (\ref{estimate_exp_nlogn}) implies  for any $p \geq 1$ that
$$\mathbb E(K_R^{p}(T))\leq C(R,T),$$
for some positive constant $C(R,T)$ depending on $R$ and $T.$  Hence, we obtain from (\ref{obtainfrommar10tue1}) and Lemma \ref{Nashinequality} that  
\begin{align}\label{L2_estimate_n_KT}
\frac{1}{2}(\|(n-K_R(T))_+\|_{L^2}^2)(t\wedge\tau\wedge T_R)&\le  \frac{1}{2}(\|(n_0-K_R(T))_+\|_{L^2}^2)- \frac{1}{2} \int_0^{T\wedge\tau\wedge T_R}(\|\nabla (n-K_R(T))_+\|_{L^2}^2)ds \nonumber\\
&+ 2 \int_0^{T\wedge\tau\wedge T_R}[K_R(T)\|(n-K_R(T))_+\|_{L^2}^2]ds +  \int_0^{T\wedge\tau\wedge T_R} [K_R^2(T)] Mds \nonumber\\
&\le \frac{1}{2}(\|(n_0-K_R(T))_+\|_{L^2}^2)- \frac{1}{4 C_N M^2}\int_0^{T\wedge\tau\wedge T_R}( \|(n-K_R(T))_+\|_{L^2}^4)ds\nonumber \\
&+  2 \int_0^{T\wedge\tau\wedge T_R}[K_R(T)\|(n-K_R(T))_+\|_{L^2}^2]ds\nonumber\\
&+  \int_0^{T\wedge\tau\wedge T_R} [K_R^2(T)] Mds ,
\end{align}
where $C_N$ are the constants from Nash inequality in Lemma \ref{Nashinequality}.  { On the set $A:=\{(\omega,t):  \Vert (n-K_R(T))_+\Vert_{L^2}^4(t\wedge T_{R}\wedge \tau)\geq 8{M^2C_N K_R(T)} \Vert (n-K_R(T))_+\Vert^{2}_{L^2}(t\wedge T_{R}\wedge \tau) \}$,  we deduce from (\ref{L2_estimate_n_KT}) that $\Vert( n-K_R(T))_+\Vert^2_{L^2}({ t\wedge T_{R}\wedge \tau})\leq\|(n_0-K_R(T))_+\|_{L^2}^2+2 K_R^2(T)MT.$   Whereas, on the set $A^c$, we have $\Vert (n-K_R(T))_+\Vert_{L^2}^2(t\wedge T_{R}\wedge \tau)\leq 8{M^2C_N K_R(T)}.  $  In summary, we find }
\begin{align}\label{pointwise_KR_L2_n}
\sup_{t\leq T{ \wedge \tau\wedge T_R}}\|(n-K_R(T))_+\|_{L^2}^2)\leq \Vert n_0\Vert_{L^2}^2+2 K_R^2(T)MT+8M^2C_NK_R(T).
\end{align}
 By raising to a $\frac{k}{2}$-th power for any $k\geq 2$ and taking the expectation of (\ref{pointwise_KR_L2_n}),  
\begin{align*}
\mathbb E(\sup_{t\leq T{\wedge \tau}\wedge T_R}\|n(t)\|^k_{L^2}) &\le 2^{\frac{k}{2}}\mathbb E( \,\big\| \sup_{t\leq T{{\wedge \tau\wedge T_R}}}\min\{n(t), K_R(T)\} \big\|^k_{L^2}) + 2^{\frac{k}{2}} \mathbb E(\sup_{t\leq T{\wedge \tau\wedge T_R}}\,\big\| (n(t) - K_R(T))_+ \big\|^k_{L^2}) \\
&\le 2^{\frac{k}{2}} \mathbb E[K_R^{\frac{k}{2}}(T)] M^{\frac{k}{2}} + C_{\|n_0\|_2, M,k}T^{\frac{k}{2}} \mathbb E[K_R^{{k}}(T)+K_R^{\frac{k}{2}}(T)],
\end{align*}
which implies for any $k>1$ that,
\begin{align}\label{thankstomar10tue1}
\mathbb E\left(\sup_{t\leq T{\wedge \tau\wedge T_R}}\Vert n(t)\Vert_{L^2}^k\right)\leq C(k,R,T),
\end{align}
for some constant $C(k,R,T)>0$ depending on $k$, $R$ and $T.$

 An immediate consequence of the bounds above 
is obtained by applying the following interpolation inequality for any $2<k\leq 4$ and $l\geq k$,
\begin{align*}
\Vert n\Vert^q_{L^{\frac{k}2}}\leq \Vert n\Vert_{L^1}^{\frac{(4-k)l}{k}} \Vert n\Vert_{L^2}^{\frac{2l(k-2)}{k}}\leq M^{\frac{(4-k)l}{k}}\Vert n\Vert_{L^2}^{\frac{2l(k-2)}{k}}.
\end{align*}
Thanks to (\ref{thankstomar10tue1}), we thus obtain for $p,q$ defined in \eqref{pqr}, by setting $k=2p=4-2\epsilon$ 
in the inequality above that, we obtain for any $l\geq k$ that
\begin{align}\label{Lpestimate_1and2_n}
\mathbb E(\sup_{t\leq T\wedge\tau\wedge T_R}\Vert n\Vert_{L^{p}}^{l})\leq M^{\frac{(4-k)l}{k}}E\bigg(\sup_{t\leq\tau\wedge T\wedge T_R}\Vert n\Vert_{L^2}^{\frac{2l(k-2)}{k}}\bigg)\leq C(k,l,R,T).
\end{align}
{Observe that, for an appropriately small $\epsilon$ (say $\epsilon<\frac2{129}$), setting $l=\frac{pq}{p-q}=\frac4\epsilon-2$ gives us the desired estimates in Proposition \ref{proposition1}.}

Next, we will upgrade these bounds spatially in $L^4$ { which will lead us to the required $L^\infty$ estimates for $\nabla c$}.

We compute from \eqref{PKSNS-time-dependent} the
 time evolution of the $L^4$ norm of the cell density $n$ and obtain
\begin{align}\label{n4estimatemar10tue}
&\frac{1}{4}{d}\|n\|_{L^4}^4\le -\frac{3}{4}\|\nabla (n^2)\|_{L^2}^2dt
   + \frac{3}{4}\|n^2\|_{L^{5/2}}^{5/2}dt\nonumber\\
   \le &-\frac{3}{4}\|\nabla (n^2)\|_{L^2}^2dt
   + C_{\mathrm{GNS}}\|\nabla (n^2)\|_{L^2}^{1/2}\|n^2\|_{L^2}^2dt.
   \end{align}

By using Lemma \ref{Nashinequality}, we find {from (\ref{n4estimatemar10tue}) that }
   \begin{align*}
\frac{1}{4}{d}\|n\|_{L^4}^4&\leq - \frac{\| n^2 \|_{L^2}^4}{C_N\|n^2\|_{L^1}^2}dt + C_{GNS} \| n^2 \|_{L^2}^{8/3}dt\,
\end{align*}
where $C_N$ is the Nash inequality constant given in (\ref{nash_formula}).  Then we have that the following holds $\bP$-a.s.:
\begin{align}\label{nL4_inequality_global}
\frac{1}{4}\Vert n\Vert_{L^4}^4(t\wedge T_{R}\wedge \tau)\leq& \frac{1}{4}\Vert n\Vert_{L^4}^4(0)-\int_0^{t\wedge T_{R}\wedge \tau} \frac{\| n \|_{L^4}^8}{C_N\|n\|_{L^2}^4} ds+C_{GNS}\int_0^{t\wedge T_{R}\wedge \tau}\Vert n\Vert_{L^4}^{16/3}ds.
\end{align}
{ On the set $A:=\{(\omega,t):  \Vert n\Vert_{L^4}^4(t\wedge T_{R}\wedge \tau)\geq \sqrt{C_{GNS}C_N} \Vert n(t\wedge T_{R}\wedge \tau)\Vert_{L^2}^2\Vert n(t\wedge T_{R}\wedge \tau)\Vert^{8/3}_{L^4} \}$}, we deduce from (\ref{nL4_inequality_global}) that $\Vert n\Vert^4_{L^4}({ t\wedge T_{R}\wedge \tau})\leq \Vert n\Vert_{L^4}^4(0).$  Whereas, on the set $A^c$, we have $\Vert n\Vert_{L^4}^{\frac{4}{3}}({ t\wedge T_{R}\wedge \tau})\leq  \sqrt{C_{GNS}C_N} \Vert n(t\wedge T_R\wedge \tau)\Vert_{L^2}^2.$ 
In summary, we have
\begin{align}\label{4_power_inequality_nL4_global}
\sup_{t\in(0,\tau\wedge T\wedge T_R)}\Vert n\Vert_{L^4}^4\leq \Vert n\Vert^4_{L^4}(0)+(C_{GNS}C_N)^{\frac{3}{2}}\sup_{t\in(0,\tau\wedge T\wedge T_R)} \Vert n\Vert_{L^2}^{6}.
\end{align}
Then, taking the $\frac{k}{4}$-th power on both sides of \eqref{4_power_inequality_nL4_global} for any $k>1$, we obtain
\begin{align*}
\sup_{t\in(0,\tau\wedge T\wedge T_R)}\Vert n\Vert_{L^4}^k\leq 2\Vert n\Vert^k_{L^4}(0)+2(C_{GNS}C_N)^{\frac{3k}{8}}\sup_{t\in(0,\tau\wedge T\wedge T_R)} \Vert n\Vert_{L^2}^{\frac{3k}{2}}
\end{align*}
By applying (\ref{thankstomar10tue1}),  we find for any $k>1,$ 
\begin{align}\label{suptCt4mar10tuenew}
\mathbb E\left(\sup_{t\leq T\wedge\tau \wedge T_R}\|n(t)\|^k_{L^4}\right)
\le
C(k,R,T).
\end{align}
Thanks to Lemma \ref{lemma28} and (\ref{suptCt4mar10tuenew}) {and the fact that $\|n\|_{L^1}=M$ at any time almost surely}, we obtain
\begin{align}\label{eq:2.4new}
\mathbb E(\sup_{t\leq T\wedge \tau \wedge T_R}\|\nabla c(t)\|^{k}_{L^\infty})\le &\mathbb E(\sup_{t\leq T\wedge \tau\wedge T_R}\Vert n\Vert_{L^1}^{\frac{k}{3}}\Vert n\Vert_{L^4}^{\frac{2k}{3}})\nonumber\\
=& M^{\frac{k}{3}} \mathbb E(\sup_{t\leq T\wedge \tau\wedge T_R}\|n(t)\|^{{\frac{2k}{3}}}_{L^4})
\le C(k,R,T,M).
\end{align}

We next estimate the fluid velocity ${\bf u}$. 
For that purpose, we introduce the notation $\nabla \times {\bf u}$ for the vorticity of a two-dimensional vector field ${\bf u} = ({ u}_1, u_2)$, i.e.,
\[
\nabla \times {\bf u} := \partial_{x_1} u_2 - \partial_{x_2} u_1.
\]
Now, consider the following equation for the vorticity $v$ of the fluid velocity ${\bf u}$: 
\begin{equation}
d v  + (\mathbf{u} \cdot \nabla) v dt = \Delta v dt+ \nabla \times ({n}  \nabla {c})dt+\nabla \times {\bf f}(\mathbf{u})dW_t.
\label{eq:vorticity}
\end{equation}
By using $v$-equation, we have 
\begin{align}\label{omegatwedgeRdmar10}
k\Vert v\Vert^{2(k-1)}_{L^2}{d}\int_{\mathbb R^2}& |v  |^2dx
+
2k\Vert v\Vert^{2(k-1)}_{L^2}\int_{\mathbb R^2}  |\nabla v|^2dt
\nonumber\\
\le&
-2k\Vert v\Vert^{2(k-1)}_{L^2}\int_{\mathbb R^2}  \mathbb (n\nabla c)\cdot \nabla^{\perp}v dt\nonumber\\
&+ 2k\Vert v\Vert^{2(k-1)}_{L^2} \langle  v(t), \nabla \times {\bf f}({\bf u}(t)) dW_t \rangle+k\Vert v\Vert^{2(k-1)}_{L^2}\| {\bf f}({\bf u}(t))\|_{L_2(U;D(A^{1/2}))}^2dt \nonumber\\
\le&
k\Vert v\Vert^{2(k-1)}_{L^2} \int_{\mathbb R^2}  |\nabla v |^2dx
+
C_k\Vert v\Vert^{2(k-1)}_{L^2}\|\nabla c\|_{L^\infty}^2
\int_{\mathbb R^2}  n^2dx\nonumber\\
&+ 2k\Vert v\Vert^{2(k-1)}_{L^2}\langle v(t), \nabla \times {\bf f}({\bf u}(t)) dW_t \rangle+ k\Vert v\Vert^{2(k-1)}_{L^2}\| {\bf f}({\bf u}(t))\|_{L_2(U;D(A^{1/2}))}^2dt,
\end{align}
where $C>0$ is a constant depending on $k$.

We integrate, use Young's inequality and \eqref{eq:2.4new} to obtain
\begin{align}\label{omegatwedgeRdmar101}
&\|v (t\wedge \tau
\wedge T_R)\|_{L^2}^{2k} +  k\int_0^{t \wedge T_R\wedge\tau} \Vert v(s)\Vert_{L^2}^{2(k-1)}\|\nabla v (s)\|_{L^2}^2 \, ds\nonumber\\
\le&\Vert v(0)\Vert_{L^2}^{2k}+C_k\int_0^{t\wedge T_R\wedge\tau}\Vert v(s)\Vert_{L^2}^{2(k-1)}( \Vert \nabla c\Vert_{L^\infty}^4+\Vert n\Vert_{L^2}^4)ds
+2k \int_0^{t \wedge T_R\wedge\tau}\Vert v(s)\Vert_{L^2}^{2(k-1)} \langle v(s), \nabla\times  {\bf f}({\bf u}(s)) dW_s \rangle\nonumber\\
&+ k\int_0^{t \wedge T_R\wedge\tau}\Vert v(s)\Vert_{L^2}^{2(k-1)} \| {\bf f}({\bf u}(s))\|_{L_2(U;D(A^{1/2}))}^2 \, ds.
\end{align}
 By BDG's inequality, Young's inequality and assumption (H1), one has
\begin{align}\label{omegatwedgeRdmar102}
\mathbb{E}\sup_{0 \le t \le T}
\left|
\int_0^{t \wedge T_R\wedge\tau}
\Vert v(s)\Vert^{2(k-1)}_{L^2}\left\langle
v(s),
\nabla \times {\bf f}({\bf u}(s)) \, dW_s
\right\rangle
\right|
\le&
C \,
\mathbb{E}
\left(
\int_0^{T \wedge T_R\wedge\tau}
\|v(s)\|^{4(k-1)}_{L^{2}}|\langle v(s) ,{\bf f}({\bf u}(s))\rangle|^2
\, ds
\right)^{1/2}\nonumber\\
\le C \,
\mathbb{E}\bigg[
\sup_{s\in(0,T\wedge\tau\wedge T_R)}\|v(s)\|^{2(k-1)}_{L^{2}}&
\left(\int_0^{T \wedge T_R\wedge\tau}
|\langle v(s) ,{\bf f}({\bf u}(s))\rangle|^2
\, ds
\right)^{1/2}\bigg]\nonumber\\
\le C \,
\mathbb{E}\bigg[
\sup_{s\in(0,T\wedge\tau\wedge T_R)}\|v(s)\|^{2k-1}_{L^{2}}&
\left(\int_0^{T \wedge T_R\wedge\tau}
\|{\bf f}({\bf u})(s)\|_{L_2(U;D(A^{1/2}))}^2
\, ds
\right)^{1/2}\bigg]\nonumber\\
\le \varepsilon{\mathbb{E}}( \,\sup_{s\in(0,T\wedge\tau\wedge T_R)}\Vert v(s)\Vert^{2k}_{L^2})
&+C_{\varepsilon}
\mathbb{E}
\left(
\int_0^{T \wedge T_R\wedge\tau}(1+
\| {\bf u}(s))\|_{W{^{1,2}}}^2)
\, ds
\right)^k,
\end{align}
for some positive constants $C$ and $C_{\varepsilon}$ depending on $\varepsilon>0$ small enough.  In addition, we use assumption (H1) to get
\begin{align}\label{omegatwedgeRdmar103}
 \int_0^{t \wedge T_R\wedge\tau}\Vert v(s)\Vert_{L^2}^{2(k-1)}\Vert   {\bf f}({\bf u}(s))\|_{L_2(U;D(A^{1/2}))}^2 \, ds\lesssim  &\int_0^{t \wedge T_R
\wedge\tau} \Vert v(s)\Vert_{L^2}^{2(k-1)}(1+\| {\bf u}(s)\|_{W^{1,2}}^2)ds\nonumber\\
\lesssim  \sup_{s\in(0,T\wedge\tau\wedge T_R)} \Vert v(s)\Vert_{L^2}^{2k}&+\bigg(\int_0^{T\wedge\tau\wedge T_R}(1+\| {\bf u}(s)\|_{W^{1,2}}^2)ds\bigg)^{ {k}}.
 \end{align}
By using H\"{o}lder' inequality, it follows from (\ref{omegatwedgeRdmar101}), (\ref{omegatwedgeRdmar102}) and (\ref{omegatwedgeRdmar103}) that
\begin{align*}
&\mathbb E(\sup_{0 \le t \le T\wedge\tau\wedge T_R}
\|v (t)\|_{L^2}^{2k})
+
\mathbb{E}\bigg(\int_0^{T\wedge T_R\wedge\tau}\Vert v(s)\Vert_{L^2}^{2(k-1)}
\|\nabla v (s)\|_{L^2}^{2k} \, ds\bigg)\nonumber\\
\le&\Vert v(0)\Vert_{L^{2}}^{2k}+C\mathbb E\big(\sup_{t\in(0,\tau\wedge T\wedge T_R)}\Vert \nabla c\Vert_{L^\infty}^8 \big)T+C\mathbb E\big( \sup_{t\in(0,\tau\wedge T\wedge T_R)}\Vert n\Vert_{L^2}^{2k+2} \big)T+C\mathbb E\big( \sup_{t\in(0,\tau\wedge T\wedge T_R)}\Vert n\Vert_{L^2}^{4k-1} \big)T\nonumber\\
&+C\bigg(T+RT+\mathbb E\big(\int_0^{T\wedge\tau\wedge T_R    }\Vert v\Vert_{L^2}^{2k}(s)\,ds\big)\bigg)^k\nonumber \\
\le&\Vert v(0)\Vert_{L^2}^2+C(k,R,T)+CT^k+CR^kT^k+CT^{k-1}\mathbb E\bigg(\int_0^{T\wedge\tau\wedge T_R}\|v\|_{L^2}^{2k}(s)ds\bigg).
\end{align*}
Then, by Gr\"{o}nwall inequality, one has
\begin{align}
 \mathbb{E}\sup_{0 \le s \le T\wedge\tau\wedge T_R}
\|v (s)\|_{L^2}^{2k}\leq C(k,R,T),
\end{align}
and thus, for any $k\geq 1$ we have
\begin{align}\label{nablauR_2p_2_estimate}
 \mathbb{E}\sup_{0 \le s \le T\wedge\tau\wedge T_R}
\|\nabla {\bf u}(s)\|_{L^2}^{k}\leq C(k,R,T).
\end{align}

Next, we show that for $p,q,r$ defined in \eqref{pqr}, we have
\begin{equation}
\mathbb{E}\Big[ \sup_{0 \le t \le T\wedge\tau\wedge T_R} \|\nabla {\bf u}(t, \cdot)\|_{L^r}^{\frac{pq}{p-q}} \Big] \leq C(R,T).
\end{equation}
Taking $L^r$ norm of $\nabla \bu$ in \eqref{3p2mar} we obtain,
\begin{align}\label{labelphi2}
\|\nabla {\bf u}\|_{L^{r}}
&\lesssim \Vert e^{-tA}\nabla{\bf u}_0\Vert_{L^{r}}+ \int_0^t \| e^{-(t-s)A} \mathcal{P} \nabla   ( n(s)  \nabla c(s) + {\bf u}(s)\cdot \nabla {\bf u}_R(s) ) \|_{L^r} \, ds\nonumber\\
&\qquad+\Vert\int_0^t e^{-(t-s)A}\mathcal  P\nabla {\bf f}({\bf u}_R(s))\,dW_s\Vert_{L^r} \nonumber \\
&\lesssim \Vert \nabla {\bf u}_0\Vert_{L^r}+ \int_0^t (t-s)^{-\frac{1}{2
}+\frac{1}{r}-\frac{1}{q}} \big( \| n (s) \|_{L^p} \| \nabla c(s) \|_{L^\frac{pq}{p-q}})ds\nonumber\\ 
&\qquad+\int_0^t(t-s)^{-\frac{1}{2
}+\frac{1}{r}-\frac{pq+2p-2q}{2pq}} \| \nabla {\bf u}(s) \|_{L^2} \| {\bf u}(s) \|_{L^\frac{pq}{p-q}} \big) \, ds\nonumber\\
&\qquad+\Vert\int_0^t e^{-(t-s)A}\mathcal   P\nabla {\bf f}({\bf u}_R(s))\,dW_s\Vert_{L^r}\nonumber\\
&\lesssim  \Vert\nabla {\bf u}_0\Vert_{L^r}+ \int_0^t (t-s)^{-\frac{1}{2}+\frac{1}{r}-\frac{1}{q}} \big( \| n(s) \|^2_{L^p})ds +\Vert\int_0^t e^{-(t-s)A}\mathcal  P\nabla{\bf f}({\bf u}_R(s))\,dW_s\Vert_{L^r}\nonumber\\
&\qquad+\int_0^t (t-s)^{-\frac{1}{2}+\frac{1}{r}-\frac{pq+2p-2q}{2pq}}  \| {\bf u}_R(s)\|_{L^{\frac{pq}{p-q}}} \| \nabla {\bf u}_R(s) \|_{L^2} \, ds, 
\end{align}
where $p,$ $q$ and $r$ are given in (\ref{pqr}).
Taking the supremum over $t\in(0,T\wedge\tau\wedge T_R)$ and then the expectation, we obtain 
\begin{align}
\sup_{t\in(0,\tau\wedge T\wedge T_R)}\Vert \nabla  {\bf u}\Vert_{L^r}&\lesssim  \Vert \nabla {\bf u}_0\Vert_{L^r}+ \big(\int_0^{T\wedge\tau\wedge T_R}(t-s)^{-\frac{1}{2}+\frac{1}{r}-\frac{1}{q}}  \, ds\nonumber \\
&+\int_0^{T\wedge\tau\wedge T_R} (t-s)^{-\frac{1}{2}+\frac{1}{r}-\frac{pq+2p-2q}{2pq}}  \, ds\big)\,\sup_{t\in(0,\tau\wedge T\wedge
 T_R)}  (\Vert n \Vert_{L^p}^2+\Vert {\bf u}\Vert_{L^{\frac{pq}{p-q}}}^2+\Vert \nabla {\bf u} \Vert^2_{L^2})\nonumber\\
&+\sup_{t\in(0,\tau\wedge T\wedge T_R)}\Vert\int_0^t e^{-(t-s)A}\mathcal  P\nabla {\bf f}({\bf u}_R(s))\,dW_s\Vert_{L^r},\label{nablau}
\end{align}
where we recall that $r=2+\epsilon$ with $\epsilon>0$ small enough.

We will next bound each term on the right hand side. By Gagliardo-Nirenberg-Sobolev inequality, we have for $p$ and $q$ given in (\ref{pqr}), that
\[
\|{\bf u}\|_{L^{\frac{pq}{p-q}}}
\le
C
\|\nabla {\bf u}\|_{L^{2}}^{\theta}
\|{\bf u} \|_{L^2}^{1-\theta},\qquad 0<\theta=
1-\frac{2}{ q}+\frac{2}{p}<1,
\]
which implies due to \eqref{nablauR_2p_2_estimate} that
\begin{align}\label{uR_Lp_global_estimate}
\mathbb E\bigg(\sup_{t\in(0,T\wedge\tau\wedge T_R)}\Vert {\bf u}\Vert^{\frac{2pq}{p-q}}_{L^\frac{pq}{p-q}}\bigg) \lesssim \mathbb  E\bigg(\sup_{t\in(0,T\wedge\tau\wedge
 T_R)}\Vert \nabla {\bf u}\Vert_{L^2}^{\frac{2pq}{p-q}}\bigg)R^{\frac{2(1-\theta)pq}{p-q}}\leq C(p,q,R,T).
\end{align}
For the final term, we apply BDG's inequality and Young's inequality. This gives us
\begin{align}\label{estiamateAalurmar10}
 &\mathbb E\bigg(\sup_{t\in(0, T\wedge\tau\wedge T_R)}\Vert\int_0^t e^{-(t-s)A}\mathcal  P\nabla {\bf f}({\bf u}(s))\,dW_s\Vert^{\frac{pq}{p-q}}_{L^r}\bigg)\leq C\bigg[\mathbb E\bigg(\int_0^{T\wedge\tau\wedge T_R}\Vert \nabla {\bf u}(t)\Vert_{r}^{{\frac{pq}{p-q}}}ds\bigg)\bigg]+CT^{\frac{pq}{2(p-q)}}.
 \end{align} 
 Hence, using  (\ref{Lpestimate_1and2_n}), (\ref{nablauR_2p_2_estimate}) and (\ref{estiamateAalurmar10}) in \eqref{nablau} we obtain that
 \begin{align}
&\mathbb E(\sup_{t\in(0,T\wedge\tau)}\|\nabla {\bf u}\|^{\frac{pq}{p-q}}_{L^{r}})\lesssim CT^{\frac{pq}{2(p-q)}}+C[\mathbb E(\int_0^{T\wedge\tau}\Vert \nabla{\bf u}(t)\Vert_{L^r}^{{\frac{pq}{p-q}}}ds)],
 \end{align}
 where $C>0$ is a constant depending on $\Vert {\bf u}_0\Vert_{L^r}$, $R$ and $T$.
Thus, by Gr\"{o}nwall inequality, we find
 \begin{align}\label{A1/2uR_estimate_global}
\mathbb{E}\sup_{t \le T\wedge\tau\wedge T_R}
\|\nabla  {\bf u}(t)\|_{L^{r}}^{\frac{pq}{p-q}}
\le C_{R,T } \, e^{C T},
\end{align}
where $r=2+\epsilon$ given in (\ref{pqr}) and $C_{R,T,p,q}>0$ is a constant depending on $R$, $T$.  

Combining (\ref{Lpestimate_1and2_n}) with $l=\frac{pq}{p-q}$, (\ref{uR_Lp_global_estimate}) and (\ref{A1/2uR_estimate_global}), we finish the proof of Proposition \ref{proposition1}.
\end{proof}
Now, we shall give the proof of Theorem \ref{thm1}.

 \begin{proof}[Proof of Theorem \ref{thm1}]
 { First, we will show that $\tau=\infty$ almost surely.}
Recall the definitions  \eqref{eq:tau_m}, \eqref{def:tau} and  \eqref{stopingtime1}.
First note that for any $R>0$ we can write
$$\{\tau\leq T\}= \{\tau\leq T\wedge T_R\}\cap\{T_R\geq T\} \cup \{\tau \leq T\}\cap \{T_R\leq T\}, $$
which implies that for any $T>0$
\begin{align*}
   \bP(\tau \leq T)& \leq \bP(\tau \leq T\wedge T_R) + \bP(T_R \leq T).
\end{align*}
Now using Proposition \ref{proposition1} we obtain for the first term on the right hand side that
\begin{align*}
    \bP(\tau\leq T\wedge T_R)& \leq \lim_{m\to\infty}\bP(\tau_m \leq T\wedge T_R) \\
    &\leq \lim_{m\to\infty} \bP \left( \sup_{t\in(0,T\wedge T_R)}\left(\|n(t)\|_{L^p} + \|\bu(t)\|_{L^{\frac{pq}{p-q}}}+\|\nabla\bu(t)\|_{L^r}\right) > m\right)\\
    &\leq \lim_{m\to\infty}\frac1{m^{\frac{pq}{p-q}}}\left(\mathbb E \sup_{t\in(0,T\wedge T_R)}\left(\Vert \nabla{\bf u}(t)\Vert_{L^r}^{\frac{pq}{p-q}}+\Vert {\bf u}(t)\Vert_{L^{\frac{pq}{p-q}}}^{\frac{pq}{p-q}}+\Vert n(t)\Vert_{L^{p}}^{\frac{pq}{p-q}}\right)\right)\\
    &\leq \lim_{m\to\infty}\frac1{m^{\frac{pq}{p-q}}} C_T(R) =0.
\end{align*}
Now,
for the second term, we use Chebyshev's inequality again. By using \eqref{stopingtime1} and \eqref{eq218jan} (see Lemma \ref{lem:modified_energy_growth}) we obtain for any $R>0$ that
\begin{align*}
     \bP(T_R \leq T) \leq \frac1R\bE \left(  \sup_{t\in[0,T{ \wedge \tau}]} \int_{\mathbb R^2} n(t)\ln^+ n(t) \, dx+
 \sup_{t\in[0,T{\wedge \tau}]} \| {\bf u}(t) \|^2_{L^2} \right) \leq \frac{C_{M,T}}{R}.
\end{align*}
Hence, we conclude that, for any $T>0$, we have
$$\bP(\tau\leq T)=0.$$
{Next, we will prove that the free energy bounds given in \eqref{energybounds} hold. First note that the higher regularity results obtained in Theorem \ref{thm:regularity} are now true global-in-time for $(n,\bu)$.} 
 
Now, we establish the global-in-time bound of $\int_{\mathbb R^2}n\ln(1+|x|^2)\,dx.$
 Noting that
\[
|\nabla \ln(1 + |x|^2)| \le \frac{2|x|}{1 + |x|^2} \le 1 \quad \text{and} \quad 
|\Delta \ln(1 + |x|^2)| = \frac{4}{(1 + |x|^2)^2} \le 4,
\]
we use the evolution equation (\ref{PKSNS-time-dependent})$_1$ for $n$ to obtain
\[
\begin{aligned}
{d} \int_{\mathbb{R}^2} n &\ln(1 + |x|^2) \, dx 
= - \int_{\mathbb{R}^2} {\bf u} \cdot \nabla n \ln(1 + |x|^2) \, dx \\
&\quad + \int_{\mathbb{R}^2} \Delta n\ln(1+|x|^2)\, dxdt-\int_{\mathbb R^2} \nabla\cdot(n\nabla c)\ln(1+|x|^2)dxdt\\
&= \int_{\mathbb{R}^2} n {\bf u} \cdot \nabla \ln(1 + |x|^2) \, dxdt +\int_{\mathbb R^2} n\Delta \ln(1+|x|^2)dxdt+\int n\nabla c\cdot \nabla(\ln(1+|x|^2)dxdt\\
&\le \int_{\mathbb{R}^2} n |{\bf u}| \, dxdt + 4Mdt + \int_{\mathbb{R}^2} n |\nabla c| \, dxdt \\
&\le \|n\|_{L^{\frac{pq}{pq-p+q}}} \|{\bf u}\|_{L^{\frac{pq}{p-q}}}dt + 4Mdt + \|n\|_{L^{\frac{pq}{pq-p+q}}} \|\nabla c\|_{L^{\frac{pq}{p-q}}}dt \\
&\le (\|n\|^2_{L^{\frac{pq}{pq-p+q}}}+ \|{\bf u}\|^2_{L^{\frac{pq}{p-q}}})dt + 4Mdt + (\|n\|^2_{L^{\frac{pq}{pq-p+q}}}+ \|\nabla c\|^2_{L^{\frac{pq}{p-q}}})dt \\
&\le (\|n_0\|_{L^{1}}^{\frac{2}{q}}\|n\|_{L^{p}}^{2(1-\frac{1}{q})}+ \|{\bf u}\|^2_{L^{\frac{pq}{p-q}}})dt + 4Mdt + (\|n_0\|_{L^1}^{{\frac{2}{q}}}\|n\|_{L^{p}}^{2(1-\frac{1}{q})} +\|n\|^2_{L^p})dt,
\end{aligned}
\]
where we used H\"{o}lder's inequality and Young's inequality.
Integrating over time yields
\[
\begin{aligned}
\int_{\mathbb{R}^2} n(t) \ln(1+|x|^2)\, dx 
&\le \int_{\mathbb{R}^2} n_0 \ln(1+|x|^2)\, dx  + 4Mt + C_M
\int_0^t (\|n\|_{L^{p}}^{2(1-\frac{1}{q})} + \|{\bf u}\|^2_{L^{\frac{pq}{p-q}}} +\|n\|^2_{L^p})(s)\, ds\\
&\le \int_{\mathbb{R}^2} n_0 \ln(1+|x|^2)\, dx  + 4Mt\\
&\quad + C_M
\sup_{s\le t}  \left(\|n(s)\|_{L^{p}}^{2(1-\frac{1}{q})} + \|{\bf u}(s)\|^2_{L^{\frac{pq}{p-q}}} +\|n(s)\|^2_{L^p}\right)\, t.
\end{aligned}
\]
Raising power $pq/(2(p-q))$ both sides, we obtain
\[
\begin{aligned}
\left(\int_{\mathbb{R}^2} n(t) \ln(1+|x|^2)\, dx \right)^{\frac{pq}{2(p-q)}}
&\le \left(\int_{\mathbb{R}^2} n_0 \ln(1+|x|^2)\, dx\right)^{\frac{pq}{2(p-q)}} + (4Mt)^{\frac{pq}{2(p-q)}}\\
&\quad + C \sup_{s\le t}  \left(\|n(s)\|_{L^{p}}^{(1-\frac{1}{q})\frac{pq}{p-q}} + \|{\bf u}(s)\|^{\frac{pq}{p-q}}_{L^{\frac{pq}{p-q}}} +\|n(s)\|^{\frac{pq}{p-q}}_{L^p}\right)\, t^{\frac{pq}{2(p-q)}}.
\end{aligned}
\]
Taking supremum over $t\in[0,T]$  then expectation yields
\begin{align}\label{estimate_nlog1+x}
\mathbb E\left[\sup_{t\le T}\left(\int_{\mathbb{R}^2} n(t) \ln(1+|x|^2)\, dx \right)^{k}\right]
&\le \left(\int_{\mathbb{R}^2} n_0 \ln(1+|x|^2)\, dx\right)^{k} + (4M T)^{k}\nonumber\\
&\quad + C \mathbb E\left[\sup_{s\le T}  \left(  \|{\bf u}(s)\|^{2k}_{L^{\frac{pq}{p-q}}} +\|n(s)\|^{2k}_{L^p}\right)\,\right] T^{k}\nonumber\\
&\le  \left(\int_{\mathbb{R}^2} n_0 \ln(1+|x|^2)\, dx\right)^{k} + (4M T)^{k}\nonumber\\
&\quad + C_{\Vert n_0\Vert_p,\Vert \bu_0\Vert_{\frac{pq}{p-q}},T } T^{k},
\end{align}
{where we used (\ref{base_estimate:nu})}  and $p$, $q$ are given in (\ref{pqr}).

We compute the evolution of free energy to the system \eqref{PKSNS-time-dependent}.
In light of divergence-free condition $\text{div}\, {\bf u} = 0$, we first apply the chain rule to $n\ln n$ associated with the equation for $n$ in (\ref{PKSNS-time-dependent}) and 
integrate it by parts to get
\begin{align}\label{eq1oct15}
d \int_{\mathbb{R}^2} n\ln n \, dx 
+ 4\|\nabla \sqrt{n}\|_{L^2}^2 dt 
&= \int_{\mathbb{R}^2} \nabla n\cdot \nabla c  dx dt. 
\end{align}

In addition, one has from (\ref{PKSNS-time-dependent}) that
\begin{align*}
d\int_{\mathbb R^2} nc\, dx=&-\int_{\mathbb R^2}\nabla n\cdot\nabla cdxdt+\int_{\mathbb R^2} {\bf u}\cdot n\nabla c dxdt+\int_{\mathbb R^2}n|\nabla c|^2dxdt+\int_{\mathbb R^2} nc_t
dxdt\\
=&-\int_{\mathbb R^2}\nabla n\cdot\nabla cdxdt+\int_{\mathbb R^2}{\bf u}\cdot  n\nabla cdxdt+\int_{\mathbb R^2}n|\nabla c|^2dxdt+\int_{\mathbb R^2}-\Delta c c_tdxdt\\
=&-\int_{\mathbb R^2}\nabla n\cdot\nabla cdxdt+\int_{\mathbb R^2}{\bf u}\cdot  n\nabla cdxdt+\int_{\mathbb R^2}n|\nabla c|^2dxdt+\int_{\mathbb R^2}\nabla c \cdot\nabla c_tdxdt,
\end{align*}
which implies
\begin{align}\label{eq1oct152}
d\int_{\mathbb R^2} nc\, dx-d(\frac{1}{2}\int_{\mathbb R^2}|\nabla c|^2 dx)
=&-\int_{\mathbb R^2}\nabla n\cdot\nabla cdxdt+\int_{\mathbb R^2}{\bf u}\cdot  n\nabla cdxdt+\int_{\mathbb R^2}n|\nabla c|^2dxdt.
\end{align}
Using the fact $({\bf u},({\bf u} \cdot \nabla){\bf u})_{L^2} = 0$, we compute the kinetic energy $\Vert u\Vert_{L^2}$ and apply It\^o's formula to get
\begin{align}\label{eq1oct153}
d\big(\frac{1}{2}\|{\bf u}\|_{L^2}^2\big)
= \Big( - \|\nabla {\bf u}\|_{L^2}^2 
    + \langle {\bf u}, n\nabla c \rangle 
    + \frac{1}{2}\|{\bf f}({\bf u})\|_{L_2(U;L^2)}^2 \Big)\,dt
    + \langle {\bf  u}, {\bf f}({\bf u})\,dW_t \rangle.
\end{align}
Combining (\ref{eq1oct15}), (\ref{eq1oct152}) with (\ref{eq1oct153}), one has
\begin{align}\label{rewrite2024mar8sun}
&d \int_{\mathbb{R}^2} n\ln n \, dx 
+ 4\|\nabla \sqrt{n}\|_{L^2}^2 dt -d\int_{\mathbb R^2} ncdx+d\big(\frac{1}{2}\int_{\mathbb R^2}|\nabla c|^2dx\big)+d(\frac{1}{2}\Vert {\bf u}\Vert_{L^2}^2)\nonumber\\
=&2\int\nabla n\cdot\nabla cdxdt-\int_{\mathbb R^2}n|\nabla c|^2dxdt\nonumber\\
&-\int_{\mathbb R^2}|\nabla {\bf u}|^2dxdt + \frac{1}{2}\|{\bf f}({\bf u})\|_{L_2(U;L^2)}^2 dt
    +  \sum_{j \ge 1}  
\big({\bf  u},  {\bf f}_j({\bf  u}) \big)_{L^2} \, dW^j.
\end{align}
In light of (\ref{freeenergy}), we rewrite (\ref{rewrite2024mar8sun}) as
 \begin{align}\label{dfintbeforemarsun}
 d\mathcal J+\int_{\mathbb R^2} n|\nabla(\ln  n-c)|^2dxdt+\int_{\mathbb R^2}|\nabla {\bf u}|^2dxdt=  \frac{1}{2}\|{\bf f}({\bf u})\|_{L_2(U;L^2)}^2 dt
    +  \sum_{j \ge 1}  
\big({\bf  u},  {\bf f}_j({\bf  u}) \big)_{L^2} \, dW^j.
 \end{align}
Next, we will first find a lower bound for $\mathcal{J}.$
Noting that
\[
\int_{\mathbb R^2} \frac{n} {M} \, dx \equiv  1,\text{ and}\quad M=\int_{\mathbb R^2} n_0dx<8\pi,
\]
and the fact that $-\Delta c =n$ we apply Lemma \ref{logHLSlemma} to obtain, for some $\delta_1>0$, that
\begin{align}\label{hlsineqaftermar8}
\int_{\mathbb{R}^2} n\ln n \, dx  -\frac{1}{2}\int_{\mathbb R^2} ncdx\geq \int_{\mathbb R^2} n\ln ndx-\frac{M}{8\pi}\int_{\mathbb R^2} n\ln ndx-C(M)\geq \delta_1\int_{\mathbb R^2} n\ln ndx-C(M),
\end{align}
where $C(M)$ is a deterministic constant appearing in Lemma \ref{logHLSlemma}.

We next bound $\int_{\mathbb R^2} n\ln n~dx$ from below in expectation.  By using Lemma \ref{lemm2mar9}, we have
\begin{align}\label{combinehlsafter2mar91}
\int_{\mathbb{R}^2} n \ln n \, dx
&\ge - \int_{\mathbb{R}^2} n \ln ^{-}n \, dx\ge - {2} \int_{\mathbb{R}^2} n \ln (1 + |x|^2) \, dx
     - M\ln \pi - \frac{1}{e}. 
\end{align}
Combining (\ref{hlsineqaftermar8}) with (\ref{combinehlsafter2mar91}), one finds 
\begin{align}\label{411mar9now}
&  \int_{\mathbb R^2} n\ln ndx-\frac{1}{2}\int_{\mathbb R^2}ncdx 
\geq -2\delta_1  \int_{\mathbb{R}^2} n \ln(1 + |x|^2) \, dx -M\delta_1 \ln \pi -\frac{\delta_1}{e}-C(M).
\end{align}
This implies that
$$\mathcal{J} \geq \frac12 \|\bu\|_{L^2}^2-2\delta_1  \int_{\mathbb{R}^2} n \ln(1 + |x|^2) \, dx -M\delta_1 \ln \pi -\frac{\delta_1}{e}-C(M).$$
Next, using this we will find upper bounds for the right hand side of \eqref{dfintbeforemarsun}.
In light of assumption (H1), we have from (\ref{estimate_nlog1+x}) that for $k\geq 1,$
and any $t\geq 0$ that,
\begin{align}\label{411realmar8now}
& \mathbb E\Big[\frac{1}{2}\int_0^t\|{\bf f}({\bf u})\|_{L_2(U;L^2)}^{2k}(s)\,ds\Big]\lesssim (1+\mathbb E\int_0^t\Vert {\bf u}\Vert^{2k}_{L^2}\,ds )\nonumber\\
 \lesssim&
 \mathbb E\big[\int_0^t|\mathcal J(t)|^k\,ds\big]+ C_k\bigg[2^k\delta_1^k \mathbb E\bigg[\int_0^t\bigg(\int_{\mathbb{R}^2} n \ln(1 + |x|^2) \, dx\bigg)^k\,ds \bigg] +(M\delta_1 \ln \pi)^kt +\frac{\delta_1^k}{e^k}t+C^k(M)t\bigg]\nonumber\\
\lesssim&\mathbb E\big[\int_0^t|\mathcal J(t)|^k\,ds\big]+ C_{k,\Vert n_0\Vert_p,\Vert {\bf u}_0\Vert_{\frac{pq}{p-q}},\delta_1,t }t^{k+1}+ \left(\int_{\mathbb{R}^2} n_0 \ln(1+|x|^2)\, dx\right)^{k}t.
\end{align}
By using BDG and Young's inequality, one finds
\begin{align}\label{413nowmar9mon}
\mathbb{E}\bigg[\sup_{s \le T} &\bigg|\int_{0}^s\sum_{j\geq 1}\left( {\bf u}(r),  {\bf f}_j({\bf u}(r)) \right)_{L^2} 
\, dW^j(r)\bigg|^k\bigg]
\le
\varepsilon \, \mathbb{E}\sup_{s \le T}\|{\bf u}(s)\|_{L^2}^{2k}
+
C_{\varepsilon,k}\,
\mathbb{E}\left(
\int_0^T \big(\|{\bf u}(r)\|_{L^2}^2 + 1\big)\,dr
\right)^k\nonumber\\
\leq& \varepsilon \mathbb E[\sup_{t\leq T}|\mathcal J(t)|^k]+C_{\varepsilon,k}\mathbb E\int_0^T |\mathcal J|^k(r)dr
+C_{\varepsilon,k}T^k+ C_{k,\Vert n_0\Vert_{p},\Vert {\bf u}_0\Vert_{\frac{pq}{p-q}},\delta_1,T }T^{(k+1)k}\nonumber\\
&+\left(\int_{\mathbb{R}^2} n_0 \ln(1+|x|^2)\, dx\right)^{k}T.
\end{align}
Now, take $\bE\sup_{0\leq t\leq T}$ on both sides of (\ref{dfintbeforemarsun}) and apply (\ref{411realmar8now}) and (\ref{413nowmar9mon}) to arrive at, 
\begin{align}\label{eqgronwallbefore}
 \mathbb E[\sup_{t\leq T}|\mathcal J(t)|^k]+\mathbb E\big(\int_0^{T}\mathcal G(r)dr\big)^k\lesssim &|\mathcal J(0)|^k+C_k\mathbb E\int_0^{T}|\mathcal J|^k(r)dr+C_{k,\Vert n_0\Vert_p,\Vert {\bf u}_0\Vert_{\frac{pq}{p-q}},\delta_1,T}T^{(k+1)k}\nonumber\\
&+ C_{k,\Vert n_0\Vert_p,\Vert {\bf u}_0\Vert_{\frac{pq}{p-q}},\delta_1,T }T^{k+1 }+\left(\int_{\mathbb{R}^2} n_0 \ln(1+|x|^2)\, dx\right)^{k}T,
 \end{align}
 where $C>0$ is a constant and
 $$\mathcal G(t):=\int_{\mathbb R^2}n|\nabla (\ln n-c)|^2dx+\Vert \nabla {\bf u}\Vert_{L^2}^2.$$
By applying the Gr\"{o}nwall inequality to (\ref{eqgronwallbefore}), one gets for $k\geq 1,$
\begin{align}\label{lastinequalitymar8sun}
 \mathbb E\bigg(\int_0^{T} \mathcal G(r) \, dr\bigg)^k &+\mathbb E[\sup_{t\leq T}|\mathcal J(t)|^k]
 \lesssim e^{C_kT} 
\bigg(
|\mathcal J(0)|^k+CT^{(k+1)k}
+\left(\int_{\mathbb{R}^2} n_0 \ln(1+|x|^2)\, dx\right)^{k}T
\bigg),
\end{align}
where $C>0$ depends on $T$, $k,\Vert n_0\Vert_p,\Vert {\bf u}_0\Vert_{\frac{pq}{p-q}}$.

\end{proof}

\section*{Acknowledgment}
The authors would like to acknowledge the support by the Department of Applied Mathematics at the University of Washington Seattle and the support by the National Science Foundation grant DMS-2553666 (transferred from DMS-2407197) awarded to Krutika Tawri. 

\bibliographystyle{plain}
\bibliography{ref}

\end{document}